\numberwithin{equation}{section}
\theoremstyle{plain} 
\newtheorem{theorem}{\bf Theorem}[section]
\newtheorem{lemma}[theorem]{\bf Lemma}
\newtheorem{corollary}[theorem]{\bf Corollary}
\newtheorem{proposition}[theorem]{\bf Proposition}
\theoremstyle{definition} 
\newtheorem{definition}[theorem]{\bf Definition}
\newtheorem{remark}[theorem]{\bf Remark}
\begin{document}

\title[Gorenstein  derived functors from special rings to arbitrary rings]{Gorenstein  derived functors from special rings to arbitrary rings} 

\author[T. Zhao]{Tiwei Zhao} 


\subjclass[2010]{ 
Primary 16E65; Secondary 16E10, 16E05.
}
%
\keywords{ 
weak injective module, Gorenstein weak injective module, Gorenstein weak cohomology, Gorenstein weak  Tate cohomology.
}
\thanks{ 
Email: tiweizhao@hotmail.com
}


\maketitle

\begin{abstract}
In this note, we mainly extend some Gorenstein homological properties from special rings (Noetherian or coherent rings ) to arbitrary rings by introducing the notions of Gorenstein  weak injective and weak projective modules respectively.
\end{abstract}

\section{Introduction and Preliminaries} 

Throughout $R$ is an associative ring with identity and all modules are unitary. Unless stated otherwise, an $R$-module
will be understood to be a left $R$-module. As usual, $pd_R(M)$ and
$id_R(M)$  will denote the projective and injective dimensions of an $R$-module $M$, respectively. We also denote  by $\mathcal{I}$ and $\mathcal{P}$ the classes of injective and projective $R$-modules, respectively. For unexplained concepts and notations, we refer the
readers to \cite{EJ1,Ro}.

In 1970, in order to generalize the homological properties from Noetherian rings to coherent rings, Stenstr\"{o}m introduced the notion of FP-injective modules  in \cite{St}. In this process, finitely generated modules should in general be replaced by finitely presented modules.  Recall  that an $R$-module $M$ is called FP-injective if $\mbox{Ext}^1_R(N,M)=0$ for any finitely presented $R$-module $N$, and accordingly, the FP-injective dimension of $M$, denoted by $FP\mbox{-}id_R(M)$, is defined to be the smallest non-negative integer $n$ such that $\mbox{Ext}^{n+1}_R(N,M)=0$ for any finitely presented $R$-module $N$. Recently, as  extending work of Stenstr\"{o}m's viewpoint, Gao and Wang introduced the notion of weak injective (and weak flat) modules (\cite{GW}). In this process, finitely presented modules were  replaced by super finitely
presented modules (see \cite{GW0} or the seventh paragraph in this section for the definition). It was shown that many results of a homological nature may be generalized from coherent rings to arbitrary
rings (see \cite{GH,GW} for details).

In 1965, Eilenberg and Moore first introduced the
viewpoint of relative homological algebra in \cite{EM}. Since then the relative homological algebra,
especially the Gorenstein homological algebra, got a rapid
development. The main ideal of Gorenstein homological algebra is to replace projective and injective modules by Gorenstein projective and injective modules. Recall from \cite{EJ1} that an $R$-module $M$ is called Gorenstein injective  if there exists an exact sequence of injective $R$-modules
$$
 \xymatrix@C=0.5cm{
  \cdots \ar[r] & I_1 \ar[r]^{} & I_0 \ar[r]^{} & I^0 \ar[r]^{} & I^1 \ar[r] & \cdots }
$$
such that $M=\operatorname{Coker}(I_1\rightarrow I_0)$ and the functor $\operatorname{Hom}_R(I,-)$ leaves this sequence exact whenever $I$ is an injective $R$-module. Dually, one may give the definition of Gorenstein projective $R$-module. Nowadays, it has been developed to an advanced level (e.g. \cite{AM,Ch,CFH,EJ,EJ1,Ho,Ho2,Hu,SSW,ZAD}).
However, in the most results of Gorenstein homological algebra, the
condition `noetherian' is essential. In order to make the
similar properties of Gorenstein homological algebra hold in a wider
environment, Ding  and his coauthors introduced the
notions of Gorenstein FP-injective and strongly Gorenstein flat
modules (see \cite{DLM,MD} for details). Later on, Gillespie renamed Gorenstein FP-injective modules as
Ding injective modules, and strongly Gorenstein flat modules as Ding
projective modules (\cite{Gi}). Recall from \cite{Gi} that an $R$-module $M$ is called Ding injective (or Gorenstein FP-injective in the sense of \cite{MD}) if there exists an exact sequence of injective $R$-modules
$$
 \xymatrix@C=0.5cm{
  \cdots \ar[r] & I_1 \ar[r]^{} & I_0 \ar[r]^{} & I^0 \ar[r]^{} & I^1 \ar[r] & \cdots }
$$
such that $M=\operatorname{Coker}(I_1\rightarrow I_0)$ and the functor $\operatorname{Hom}_R(E,-)$ leaves this sequence exact whenever $E$ is an FP-injective $R$-module. It was shown that the Ding homological algebra over coherent rings possess many nice
properties analogous to the Gorenstein homological algebra over Noetherian rings (see \cite{DLM,Gi,MD,Ya} for details). It is natural to ask how generalize the Gorenstein homological algebra from coherent rings to arbitrary rings. Notice that the definition of Gorenstein injective modules is based on the injective modules, and the definition of Ding injective modules is based on the FP-injective modules.  So, in order to make the properties of relative homological algebra hold over any ring, it seems that we have to replace FP-injective modules by weak injective modules.

The main purpose of this paper is to generalize the Gorenstein homological properties from Noetherian or coherent rings to any rings.
In Section 2, we introduce the notion of Gorenstein weak injective modules in terms of weak injective modules,  and discuss some of the properties of these modules. We show that every Gorenstein weak injective $R$-module is either injective or has  weak injective dimension $\infty$ (see Proposition $\ref{either or}$); and  an $R$-module $M$ is Gorenstein weak injective if and only if $M$ has an exact left $\mathcal{WI}$-resolution and $\operatorname{Ext}^i_R(W,M)=0$ for any weak injective $R$-module $W$ and any $i\geq 1$ (see Proposition $\ref{prop2.7}$). We also show that the class of Gorenstein weak injective modules is injectively resolving (see Proposition $\ref{injectively coresolving}$). Then we introduce and  study the Gorenstein weak injective dimension of modules. Moreover, the existence of Gorenstein weak injective preenvelope is given (see Proposition $\ref{Gorenstein weak injective preenvelope}$). Finally, we introduce the notion of $\mathcal{GWI}$-copure exact sequence, and further give characterizations of Gorenstein weak injective modules in terms of it (see Propositions $\ref{prop5.2}$ and $\ref{prop5.3}$).
In Section 3, we introduce and investigate dually  Gorenstein weak projective modules in terms of weak flat modules.
In Section 4, we mainly investigate a class of relative right derived functors, denoted by ${\operatorname{Ext}}^n_{\mathcal{GW}}(-,-)$, with respect to Gorenstein weak  modules, and study the balance of this functor (see Theorem $\ref{balanced functor}$). Moreover, we give some characterizations of Gorenstein weak injective and  projective dimensions in terms of this derived functor (see Proposition $\ref{characterizations derived functors}$).
In Section 5, we continue to investigate
another derived functor, denoted by $\widehat{\operatorname{Ext}}^n_{\mathcal{GW}}(-,-)$. We show that the functor $\widehat{\operatorname{Ext}}^n_{\mathcal{GW}}(-,-)$ actually measures the distance between  the usual right derived functor ${\operatorname{Ext}}^n_{R}(-,-)$ and the Gorenstein weak  right derived functor ${\operatorname{Ext}}^n_{\mathcal{GW}}(-,-)$. Finally, we give  the balance of this functor (see Theorem $\ref{Tate balance}$).

In the following,  we  recall some terminologies and  preliminaries. For more details, we refer the readers to \cite{EJ1,GH,GW}.

\begin{definition} (\cite{EJ1}) Let $\mathcal {F}$ be a class of $R$-modules. By an
\emph{$\mathcal {F}$-preenvelope} of an $R$-module $M$, we mean a morphism
$\varphi: M \rightarrow F$ where $F\in \mathcal {F}$ such that for
any morphism $f: M\rightarrow F^{'}$ with $F^{'}\in \mathcal {F}$,
there exists a morphism $g:F\rightarrow F^{'}$ such that
$g\varphi=f$, that is, there is the following commutative diagram:
$$
\xymatrix{
  M \ar[dr]_{f} \ar[r]^{\varphi}
                & F \ar@{.>}[d]^{g}  \\
                & F'             }
$$
 If furthermore, when $F^{'}=F$ and $f=\varphi$,
the only such $g$ are automorphisms of $F$, then $\varphi: M
\rightarrow F$ is called an \emph{$\mathcal {F}$-envelope} of $M$.

Dually, one may give the notion of \emph{$\mathcal {F}$-(pre)cover} of an $R$-module.
Note that $\mathcal {F}$-envelopes and $\mathcal {F}$-covers may not
exist in general, but if they exist, they are unique up to
isomorphism.\end{definition}

In the process that some results of a homological nature may be generalized from coherent rings to arbitrary rings, the notion of super finitely presented modules plays a crucial role. Recall from \cite{GW0} that an $R$-module $M$ is said to be \emph{super finitely presented} (or $FP_\infty$ in \cite{HM}) if there exists an exact sequence $\cdots \rightarrow F_1\rightarrow F_0\rightarrow M\rightarrow 0$, where each $F_i$ is finitely generated and projective. Then Gao and Wang gave the definition of weak injective (and weak flat) modules in terms of super finitely presented modules in \cite{GW}, which is a generalization of the notion of FP-injective modules.

\begin{definition}(\cite{GW})
An $R$-module $M$ is called \emph{weak injective} if $\mbox{Ext}^1_R(N,M)=0$ for any super finitely presented $R$-module $N$. A right $R$-module $M$ is called \emph{weak flat} if $\mbox{Tor}_1^R(M,N)=0$ for any super finitely presented $R$-module $N$.

Accordingly, the \emph{weak injective dimension} of an $R$-module $M$, denoted by $wid_R(M)$, is defined to be the smallest non-negative integer $n$ such that $\mbox{Ext}^{n+1}_R(N,M)=0$ for any super finitely presented $R$-module $N$, and the \emph{weak flat dimension} of a right $R$-module $M$, denoted by $wfd_R(M)$, is defined to be the smallest non-negative integer $n$ such that $\mbox{Tor}_{n+1}^R(M,N)=0$ for any super finitely presented $R$-module $N$.
\end{definition}

We denote by $\mathcal{WI}$ and $\mathcal{WF}$ the classes of  weak injective and weak flat $R$-modules, respectively.  By  \cite[Thm. 3.4]{GH},  every $R$-module  has  a weak injective preenvelope. So for any $R$-module $M$, $M$ has a right $\mathcal{WI}$-resolution, that is, there exists a $\mbox{Hom}_R(-,\mathcal{WI})$-exact complex
$$\xymatrix@C=0.5cm{
  0 \ar[r]&  M\ar[r]& E^0\ar[r]& E^1\ar[r]&E^2\ar[r]& \cdots},$$
  where each $E^i$ is weak injective.
Moreover, since every injective $R$-module is weak injective, this complex is also exact. On the other hand, every $R$-module has a weak injective cover by \cite[Thm. 3.1]{GH}. So every $R$-module $M$ has a left  $\mathcal{WI}$-resolution, that is, there exists a $\mbox{Hom}_R(\mathcal{WI},-)$-exact complex
$$\xymatrix@C=0.5cm{
 \cdots \ar[r]&  W_2\ar[r]& W_1\ar[r]& W_0\ar[r]&M\ar[r]& 0},$$
  where each $W_i$ is weak injective. But this complex is not necessarily exact.

Since every $R$-module has a weak flat cover by \cite{GH}, every $R$-module  has a left $\mathcal{WF}$-resolution. So for any $R$-module $M$,  there exists a $\mbox{Hom}_R(\mathcal{WF},-)$-exact complex
$$\xymatrix@C=0.5cm{
 \cdots \ar[r]&  F_2\ar[r]& F_1\ar[r]& F_0\ar[r]&M\ar[r]& 0},$$
  where each $F_i$ is weak flat.
Moreover, since every projective $R$-module is weak flat, this complex is also exact. On the other hand, every $R$-module  has  a weak flat preenvelope by \cite[Thm. 2.15]{GW}. So for any $R$-module $M$, $M$ has a right $\mathcal{WF}$-resolution, that is, there exists a $\mbox{Hom}_R(-,\mathcal{WF})$-exact complex
$$\xymatrix@C=0.5cm{
  0 \ar[r]&  M\ar[r]& F^0\ar[r]& F^1\ar[r]&F^2\ar[r]& \cdots},$$
  where each $F^i$ is weak flat. Similarly, this complex is not necessarily exact.

\section{Gorenstein weak injective modules and dimension}

In this section, we give the definition of Gorenstein weak injective  modules in terms of weak injective  modules,  and discuss some of the properties of these modules.

\begin{definition}\label{Gorenstein weak injective}
An $R$-module $M$ is called \emph{Gorenstein weak injective} if there exists an exact sequence of  injective $R$-modules
$$
 \emph{\textbf{I}}=\xymatrix@C=0.5cm{
  \cdots \ar[r] & I_1 \ar[r]^{} & I_0 \ar[r]^{} & I^0 \ar[r]^{} & I^1 \ar[r] & \cdots }
$$
such that $M=\operatorname{Coker}(I_1\rightarrow I_0)$ and the functor $\operatorname{Hom}_R(W,-)$ leaves this sequence exact whenever $W$ is a weak injective $R$-module.
\end{definition}

We will denote by $\mathcal{GWI}$ the class of Gorenstein weak injective $R$-modules.

\begin{remark}\label{1-4}

$(1)$ Every injective $R$-module is Gorenstein weak injective.

$(2)$ Since every FP-injective $R$-module is weak injective, every Gorenstein weak injective is Ding injective $R$-module (in the sense of \cite{Gi}). If $R$ is a left coherent ring, then the class of  Gorenstein weak injective $R$-modules coincides with the class of Ding injective $R$-modules. Moreover, we have the following implications:
$$
\begin{array}{ccc}
\mbox{Gorenstein weak injective }R\mbox{-modules}&\Rightarrow&\mbox{Ding injective }R\mbox{-modules}\\
&\Rightarrow&\mbox{Gorenstein injective }R\mbox{-modules}.
\end{array}
$$
If $R$ is a left Noetherian ring, then these three kinds of $R$-modules coincide.

$(3)$ The class of Gorenstein weak injective $R$-modules is closed under direct products.

$(4)$ If $
 \emph{\textbf{I}}=\xymatrix@C=0.5cm{
  \cdots \ar[r] & I_1 \ar[r]^{} & I_0 \ar[r]^{} & I^0 \ar[r]^{} & I^1 \ar[r] & \cdots }
$ is an exact sequence of  injective $R$-modules such that the functor $\operatorname{Hom}_R(W,-)$ leaves this sequence exact whenever $W$ is a weak injective $R$-module, then by symmetry, all the images, the kernels and the cokernels of $\emph{\textbf{I}}$ are Gorenstein weak injective.
\end{remark}

\begin{lemma}\label{lemma1}
Let $M$ be a Gorenstein weak injective $R$-module. Then $\operatorname{Ext}^i_R(W,M)=0$ for any weak injective $R$-module $W$ and any $i\geq 1$.
\end{lemma}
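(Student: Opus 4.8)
The plan is to use the defining exact sequence of $M$ together with a dimension-shifting argument. Since $M$ is Gorenstein weak injective, there is an exact sequence of injective $R$-modules
$$\textbf{I}=\xymatrix@C=0.5cm{\cdots \ar[r] & I_1 \ar[r] & I_0 \ar[r] & I^0 \ar[r] & I^1 \ar[r] & \cdots}$$
with $M=\operatorname{Coker}(I_1\rightarrow I_0)$ which remains exact after applying $\operatorname{Hom}_R(W,-)$ for every weak injective $W$. Break this into short exact sequences $0\rightarrow M\rightarrow I^0\rightarrow M^1\rightarrow 0$, $0\rightarrow M^1\rightarrow I^1\rightarrow M^2\rightarrow 0$, and so on, where $M^j=\operatorname{Coker}(I^{j-2}\rightarrow I^{j-1})$. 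By Remark \ref{1-4}(4), each cosyzygy $M^j$ is again Gorenstein weak injective, so the argument can be iterated.

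First I would establish the case $i=1$. Apply $\operatorname{Hom}_R(W,-)$ to $0\rightarrow M\rightarrow I^0\rightarrow M^1\rightarrow 0$ and take the long exact cohomology sequence. Since the complex $\textbf{I}$ is $\operatorname{Hom}_R(W,-)$-exact, the map $\operatorname{Hom}_R(W,I^0)\rightarrow \operatorname{Hom}_R(W,M^1)$ is surjective (its image is exactly the cycles at the $M^1$ spot, which equals all of $\operatorname{Hom}_R(W,M^1)$ because the next map into $\operatorname{Hom}_R(W,I^1)$ has the appropriate kernel — more carefully, $\operatorname{Hom}_R(W,-)$-exactness of $\textbf{I}$ means the sequence $\operatorname{Hom}_R(W,I_0)\rightarrow\operatorname{Hom}_R(W,I^0)\rightarrow\operatorname{Hom}_R(W,I^1)$ is exact, and one deduces surjectivity of $\operatorname{Hom}_R(W,I^0)\rightarrow\operatorname{Hom}_R(W,M^1)$ from a short diagram chase). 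Hence the connecting map shows $\operatorname{Ext}^1_R(W,M)$ embeds into $\operatorname{Ext}^1_R(W,I^0)=0$, since $I^0$ is injective. Therefore $\operatorname{Ext}^1_R(W,M)=0$.

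Next I would do the inductive step. Assume $\operatorname{Ext}^j_R(W,N)=0$ for all Gorenstein weak injective $N$ and all weak injective $W$, for some $j\geq 1$. For $i=j+1$, apply $\operatorname{Hom}_R(W,-)$ to $0\rightarrow M\rightarrow I^0\rightarrow M^1\rightarrow 0$ and use the long exact sequence: $\operatorname{Ext}^j_R(W,M^1)\rightarrow\operatorname{Ext}^{j+1}_R(W,M)\rightarrow\operatorname{Ext}^{j+1}_R(W,I^0)$. The right-hand term is $0$ because $I^0$ is injective, and the left-hand term is $0$ by the inductive hypothesis applied to the Gorenstein weak injective module $M^1$. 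Hence $\operatorname{Ext}^{j+1}_R(W,M)=0$, completing the induction.

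I do not expect a serious obstacle here; the only point requiring a little care is the base case, specifically verifying that $\operatorname{Hom}_R(W,-)$-exactness of the doubly infinite complex $\textbf{I}$ genuinely forces the map $\operatorname{Hom}_R(W,I^0)\rightarrow\operatorname{Hom}_R(W,M^1)$ to be surjective rather than merely having the expected image. This is a standard consequence of the definition of an exact complex that stays exact under a left-exact functor, but it is the one spot where one should spell out the diagram chase. Everything else is routine dimension shifting using Remark \ref{1-4}(4) and the injectivity of the modules $I^j$.
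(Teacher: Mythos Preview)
Your proposal is correct, and it uses the same underlying data as the paper (the right half $0\to M\to I^0\to I^1\to\cdots$ of the complete resolution). However, the paper's proof is considerably more direct: it simply observes that this right half is an injective resolution of $M$ which remains exact after applying $\operatorname{Hom}_R(W,-)$, and hence, by the very definition of $\operatorname{Ext}$ via injective resolutions, $\operatorname{Ext}^i_R(W,M)=0$ for all $i\geq 1$ in one stroke. Your induction and dimension-shifting are valid but unnecessary --- the vanishing drops out immediately once you recognize that $\operatorname{Ext}^i_R(W,M)$ is the $i$th cohomology of the deleted complex $\operatorname{Hom}_R(W,I^\bullet)$, which is exact in positive degrees by hypothesis. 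In particular, the diagram chase you flag as the delicate point (surjectivity of $\operatorname{Hom}_R(W,I^0)\to\operatorname{Hom}_R(W,M^1)$) is absorbed into the one-line argument and never needs to be isolated. Your route buys nothing extra here, though the short-exact-sequence decomposition you set up would be the right move in situations where the modules in the resolution are not themselves injective.
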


\begin{proof}
By the definition of Gorenstein weak injective $R$-modules, $M$ admits an injective resolution $0\rightarrow M\rightarrow I^0\rightarrow I^1\rightarrow \cdots$ which remains exact after applying the functor $\mbox{Hom}_R(W,-)$ for any weak injective $R$-module $W$. So $\operatorname{Ext}^i_R(W,M)=0$ for any $i\geq 1$.
\end{proof}

\begin{proposition}\label{either or}
A Gorenstein weak injective $R$-module is either injective or has  weak injective dimension $\infty$. Consequently, $\mathcal{GWI}\bigcap \widetilde{\mathcal{WI}}=\mathcal{I}$, where $\widetilde{\mathcal{WI}}$ denote the class of $R$-modules with finite weak injective dimension.
\end{proposition}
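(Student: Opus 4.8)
The plan is to isolate the case of weak injective dimension zero and then transport it along cosyzygies. The core step I would establish first is the identity $\mathcal{GWI}\cap\mathcal{WI}=\mathcal{I}$: given $M$ that is simultaneously weak injective and Gorenstein weak injective, pick, by Definition~\ref{Gorenstein weak injective}, an exact sequence of injectives $\mathbf{I}=\cdots\to I_1\to I_0\to I^0\to I^1\to\cdots$ with $M=\operatorname{Coker}(I_1\to I_0)$ and with $\operatorname{Hom}_R(W,-)$ leaving it exact for every weak injective $W$. Then $0\to L\to I_0\to M\to 0$ is exact with $I_0$ injective, and $L=\operatorname{Im}(I_1\to I_0)$ is Gorenstein weak injective by Remark~\ref{1-4}(4). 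Since $M\in\mathcal{WI}$ and $L\in\mathcal{GWI}$, Lemma~\ref{lemma1} gives $\operatorname{Ext}^1_R(M,L)=0$, so the sequence splits and $M$ is a direct summand of the injective module $I_0$; hence $M$ is injective.

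Next I would take $M\in\mathcal{GWI}$ with $wid_R(M)=n<\infty$ and prove $M$ is injective. If $n=0$ this is the previous step. If $n\geq 1$, I would use the right half $0\to M\to I^0\to I^1\to\cdots$ of a complete resolution of $M$, put $C^{-1}=M$ and $C^{j}=\operatorname{Im}(I^{j}\to I^{j+1})$ for $j\geq 0$, obtaining short exact sequences $0\to C^{j-1}\to I^{j}\to C^{j}\to 0$ in which each $I^{j}$ is injective and each $C^{j}$ is Gorenstein weak injective (Remark~\ref{1-4}(4), since $C^{j}$ is an image inside $\mathbf{I}$). Dimension shifting along these sequences gives $\operatorname{Ext}^1_R(N,C^{n-1})\cong\operatorname{Ext}^{n+1}_R(N,M)$ for every super finitely presented $R$-module $N$, and the right-hand side vanishes since $wid_R(M)=n$; hence $C^{n-1}\in\mathcal{WI}\cap\mathcal{GWI}=\mathcal{I}$ by the first step. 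Then I would descend: if $C^{j}$ is injective for some $0\leq j\leq n-1$, then in $0\to C^{j-1}\to I^{j}\to C^{j}\to 0$ the module $C^{j}$ is in particular weak injective and $C^{j-1}$ is Gorenstein weak injective, so $\operatorname{Ext}^1_R(C^{j},C^{j-1})=0$ by Lemma~\ref{lemma1}, the sequence splits, and $C^{j-1}$ is a direct summand of the injective $I^{j}$, hence injective. Running $j$ from $n-1$ down to $0$ yields that $M=C^{-1}$ is injective.

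This proves the dichotomy, since its contrapositive says that a Gorenstein weak injective module that is not injective must have infinite weak injective dimension. The equality $\mathcal{GWI}\cap\widetilde{\mathcal{WI}}=\mathcal{I}$ then follows at once: the inclusion $\subseteq$ is exactly what has been shown, $\mathcal{I}\subseteq\mathcal{GWI}$ is Remark~\ref{1-4}(1), and $\mathcal{I}\subseteq\widetilde{\mathcal{WI}}$ because injective modules have weak injective dimension $0$. The step I expect to require the most care is verifying that each cosyzygy $C^{j}$ really sits inside the complete resolution so that Remark~\ref{1-4}(4) applies, together with the bookkeeping of the dimension shift that forces $C^{n-1}$ to be weak injective; beyond that I do not anticipate a genuine obstacle.
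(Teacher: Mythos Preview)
Your argument is correct, and it reaches the same conclusion via essentially the same ingredients (a partial injective resolution, dimension shifting to make the top cosyzygy weak injective, and Lemma~\ref{lemma1}), but it is organised differently from the paper's proof. The paper does not separate out the case $n=0$ or perform a stepwise descent: instead, after observing that $V^n$ is weak injective, it notes that the entire length-$n$ exact sequence $0\to M\to I^0\to\cdots\to I^{n-1}\to V^n\to 0$ represents an element of the Yoneda group $\operatorname{Ext}^n_R(V^n,M)$, which vanishes by Lemma~\ref{lemma1}; hence the long sequence splits and $M$ is a summand of $I^0$ in one stroke. Your route trades this single invocation of $\operatorname{Ext}^n=0$ for an iterated use of $\operatorname{Ext}^1=0$ along the cosyzygies of the complete resolution, which is slightly longer but has the virtue of avoiding the Yoneda interpretation of higher Ext and of making the role of Remark~\ref{1-4}(4) explicit. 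One small redundancy: once you have established $\mathcal{GWI}\cap\mathcal{WI}=\mathcal{I}$, you can shortcut the descent by observing that already $C^{-1}=M$ satisfies $\operatorname{Ext}^1_R(C^0,M)=0$ (since $C^0\in\mathcal{GWI}$ implies nothing directly, but $wid_R(M)<\infty$ combined with repeated shifts gives $wid_R(C^{j})<\infty$ for all $j$, so each $C^j$ lies in $\mathcal{GWI}\cap\widetilde{\mathcal{WI}}$ and you could split immediately once you know the proposition for smaller $n$); however, your version as written is perfectly sound.
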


\begin{proof}
Let $M$ be a Gorenstein weak injective $R$-module and assume that $wid_R(M)=n<\infty$, that is, $\mbox{Ext}^{n+1}_R(N,M)=0$ for  any super finitely presented $R$-module $N$. Choose a partial injective resolution of $M$: $0\rightarrow M\rightarrow I^0\rightarrow I^1\rightarrow \cdots \rightarrow I^{n-1}\rightarrow V^n\rightarrow 0$. It follows from the isomorphism $\mbox{Ext}^1_R(N,V^n)\cong \mbox{Ext}^{n+1}_R(N,M)$ that $V^n$ is weak injective. Now  $0\rightarrow M\rightarrow I^0\rightarrow I^1\rightarrow \cdots \rightarrow I^{n-1}\rightarrow V^n\rightarrow 0$ represents an element of $\mbox{Ext}^n_R(V^n,M)$, but this group equals 0 by Lemma $\ref{lemma1}$. Thus this sequence is split exact, and so $M$ is injective as a direct summand of $I^0$.
\end{proof}

\begin{corollary}\label{iff}
An $R$-module is injective if and only if it is weak injective and Gorenstein weak injective.
\end{corollary}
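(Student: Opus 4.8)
The plan is to read this off directly from Proposition \ref{either or}, so the proof is short and no genuine obstacle arises.

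For the forward implication, suppose $M$ is injective. Then $M$ is Gorenstein weak injective by Remark \ref{1-4}$(1)$, and $M$ is weak injective because, as recalled in Section 1, every injective $R$-module is weak injective (indeed $\mathcal{I}\subseteq\mathcal{WI}$, so $wid_R(M)=0$).

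For the converse, suppose $M$ is both weak injective and Gorenstein weak injective. Being weak injective, $M$ has weak injective dimension $0$, in particular $wid_R(M)<\infty$, so $M\in\mathcal{GWI}\cap\widetilde{\mathcal{WI}}$. Now invoke Proposition \ref{either or}: a Gorenstein weak injective module is either injective or has infinite weak injective dimension; the latter is impossible here, so $M$ is injective. (Equivalently, one may quote the ``consequently'' clause $\mathcal{GWI}\cap\widetilde{\mathcal{WI}}=\mathcal{I}$ of Proposition \ref{either or} directly.)

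The only point worth stating carefully is the inclusion $\mathcal{I}\subseteq\mathcal{WI}$ used in the forward direction and the triviality $wid_R(M)=0$ for weak injective $M$ used in the converse; both are immediate from the definitions, so no step is a real obstacle.
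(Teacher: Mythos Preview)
Your proof is correct and is precisely the intended argument: the paper states this corollary immediately after Proposition~\ref{either or} with no separate proof, so reading both implications off from Remark~\ref{1-4}(1), the inclusion $\mathcal{I}\subseteq\mathcal{WI}$, and Proposition~\ref{either or} is exactly what is expected.
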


\begin{corollary}
If the class of Gorenstein weak injective $R$-modules is closed under direct sums, then the ring $R$ is left Noetherian.
\end{corollary}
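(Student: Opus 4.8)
The plan is to reduce the statement to the classical Bass--Papp theorem: $R$ is left Noetherian if and only if every direct sum of injective left $R$-modules is injective (see e.g. \cite{Ro}). Thus it suffices to show that, under the hypothesis, an arbitrary direct sum $\bigoplus_{i\in\Lambda}I_i$ of injective $R$-modules is injective.

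First I would check that $\bigoplus_{i\in\Lambda}I_i$ is weak injective. Let $N$ be any super finitely presented $R$-module; then $N$ has a projective resolution by finitely generated projective modules, and since $\operatorname{Hom}_R(P,-)$ commutes with arbitrary direct sums when $P$ is finitely generated projective, and direct sums are exact, the functor $\operatorname{Ext}^1_R(N,-)$ commutes with direct sums. As each $I_i$ is injective, hence weak injective, we get $\operatorname{Ext}^1_R(N,\bigoplus_{i\in\Lambda}I_i)\cong\bigoplus_{i\in\Lambda}\operatorname{Ext}^1_R(N,I_i)=0$, so $\bigoplus_{i\in\Lambda}I_i$ is weak injective. (In other words, $\mathcal{WI}$ is closed under direct sums; this is also recorded in \cite{GW}.)

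Next, by Remark $\ref{1-4}(1)$ each $I_i$ is Gorenstein weak injective, so the hypothesis that $\mathcal{GWI}$ is closed under direct sums gives that $\bigoplus_{i\in\Lambda}I_i$ is Gorenstein weak injective. Combining the two conclusions, $\bigoplus_{i\in\Lambda}I_i$ is simultaneously weak injective and Gorenstein weak injective, hence injective by Corollary $\ref{iff}$. Since the family $\{I_i\}_{i\in\Lambda}$ was arbitrary, every direct sum of injective $R$-modules is injective, and therefore $R$ is left Noetherian.

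I do not anticipate a real obstacle: the argument is essentially a two-line combination of Remark $\ref{1-4}(1)$, Corollary $\ref{iff}$, and Bass--Papp. The only point needing a small justification is that $\operatorname{Ext}^1_R(N,-)$ commutes with direct sums for super finitely presented $N$, which is exactly where the $FP_\infty$ (super finite presentation) hypothesis is used, and this is the feature that distinguishes the present setting from the FP-injective / coherent one.
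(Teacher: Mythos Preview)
Your proof is correct and follows essentially the same route as the paper: show that a direct sum of injectives is both weak injective (the paper cites \cite[Prop.~2.3]{GW} for closure of $\mathcal{WI}$ under direct sums, while you verify it directly) and Gorenstein weak injective, then apply Corollary~\ref{iff} and Bass--Papp. The only difference is cosmetic: the paper leaves the Bass--Papp step implicit in the final sentence, whereas you name it explicitly.
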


\begin{proof}
Note that the class of weak injective $R$-modules is closed under direct sums by \cite[Prop. 2.3]{GW}, and every injective $R$-module is weak injective. So a direct sum of injective $R$-modules is weak injective. Moreover,  it is also Gorenstein weak injective by hypothesis and Remark $\ref{1-4}$(1). It follows then from Corollary $\ref{iff}$ that  a direct sum of injective $R$-modules must be injective. Thus $R$ is left Noetherian.
\end{proof}

In general, weak injective modules need not be Gorenstein weak injective as shown by the following proposition.

\begin{proposition}\label{Noetherian}
A ring $R$ is left Noetherian if and only if every weak injective module is Gorenstein weak injective.
\end{proposition}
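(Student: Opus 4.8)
The plan is to prove both implications of the biconditional in Proposition~\ref{Noetherian}.

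For the forward direction, suppose $R$ is left Noetherian and let $M$ be a weak injective $R$-module. Over a Noetherian ring every direct sum (in particular every direct limit of injectives, hence the relevant cosyzygy modules) behaves well, but the cleanest route is to use Remark~\ref{1-4}(2): over a left Noetherian ring the classes of Gorenstein weak injective, Ding injective, and Gorenstein injective modules all coincide. So it suffices to show that a weak injective module over a left Noetherian ring is Gorenstein injective. Here one uses that over a Noetherian ring ``weak injective'' should force genuine injectivity, or at least that weak injective modules have finite (indeed zero) injective dimension; then invoke the standard fact that modules of finite injective dimension that are also Gorenstein injective are injective. Actually the more direct argument: over a left Noetherian ring a super finitely presented module is the same as a finitely presented module (every finitely generated module over a Noetherian ring admits a resolution by finitely generated projectives), so weak injective $=$ FP-injective $=$ injective. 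An injective module is Gorenstein weak injective by Remark~\ref{1-4}(1), and we are done. I should double-check the identification ``super finitely presented $=$ finitely presented over Noetherian rings'', but this is standard.

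For the converse, assume every weak injective $R$-module is Gorenstein weak injective; I want to conclude $R$ is left Noetherian. The strategy is to combine this hypothesis with Proposition~\ref{either or} (equivalently Corollary~\ref{iff}): a module that is both weak injective and Gorenstein weak injective must be injective. Under our hypothesis \emph{every} weak injective module is Gorenstein weak injective, hence every weak injective module is injective, i.e.\ $\mathcal{WI} = \mathcal{I}$. Now I use the preceding Corollary: the class of weak injective modules is closed under direct sums by \cite[Prop.~2.3]{GW}, so if $\mathcal{WI} = \mathcal{I}$ then the class of injective modules is closed under direct sums, and by the classical Bass--Papp theorem this forces $R$ to be left Noetherian. (Alternatively, one can cite the Corollary proved just above: if $\mathcal{GWI}$ is closed under direct sums then $R$ is left Noetherian — and here, since $\mathcal{WI}=\mathcal{I}\subseteq\mathcal{GWI}$ and $\mathcal{WI}$ is closed under direct sums, arbitrary direct sums of injectives are in $\mathcal{GWI}$ and are weak injective, hence injective, giving the same conclusion.)

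The main obstacle is the forward direction, specifically making the step ``weak injective implies injective over a left Noetherian ring'' rigorous. The cleanest justification is the coincidence of super finitely presented and finitely presented modules over a left Noetherian ring, together with the classical fact that FP-injective modules over a Noetherian ring are injective; if the paper prefers not to invoke that identification, one can instead appeal to Remark~\ref{1-4}(2) for the equality of $\mathcal{GWI}$ with the Ding injective / Gorenstein injective classes over Noetherian rings and then show every weak injective module is Gorenstein injective by exhibiting the required complete injective resolution — but this seems to circle back to the same point. The converse is comparatively routine, resting only on Corollary~\ref{iff}, the closure of $\mathcal{WI}$ under direct sums, and the Bass--Papp characterization of left Noetherian rings.
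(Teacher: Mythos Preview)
Your proof is correct and follows essentially the same approach as the paper. For the forward direction, both you and the paper use that over a left Noetherian ring weak injective modules coincide with injective modules (the paper states this in one line; your justification via the identification of super finitely presented with finitely generated modules over Noetherian rings is exactly the intended one). For the converse, the paper gives the splitting argument directly---embedding a weak injective $W$ into an injective $E$, noting the cokernel $N$ is again weak injective, and invoking Lemma~\ref{lemma1} to obtain $\operatorname{Ext}^1_R(N,W)=0$ so the sequence splits---while you instead cite Corollary~\ref{iff}; these are the same argument packaged differently, and both conclude by showing $\mathcal{WI}=\mathcal{I}$ and then appealing (implicitly in the paper, explicitly in your write-up) to closure of $\mathcal{WI}$ under direct sums and Bass--Papp.
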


\begin{proof}
$\Rightarrow$. It follows from the fact that the class of weak injective $R$-modules coincide with the class of injective $R$-modules over a Noetherian ring $R$.

$\Leftarrow$. Let $W$ be a weak injective $R$-module and $0\rightarrow W\rightarrow E \rightarrow N\rightarrow 0$ an exact sequence with $E$ injective. It is easy to verify that $N$ is also weak injective, and hence $\mbox{Ext}^1_R(N,W)=0$ by hypothesis and Lemma $\ref{lemma1}$. That is, the sequence $0\rightarrow W\rightarrow E \rightarrow N\rightarrow 0$ is split. Thus $W$ is injective, and $R$ is left Noetherian.
\end{proof}

As what said in \cite[Sec. 4]{GH}, $\mbox{Hom}_R(-,-)$ is left balanced on $_R\mathcal{M}\times   _R\!\mathcal{M}$ by $\mathcal{WI}\times \mathcal{WI}$, where $_R\mathcal{M}$ denotes the category of left $R$-modules. Denote by $\mbox{Ext}_i^{\mathcal{WI}}(-,-)$ the $i$th left derived functor of $\mbox{Hom}_R(-,-)$ with respect to $\mathcal{WI}\times \mathcal{WI}$. For any $R$-modules $M$ and $N$,  $\mbox{Ext}_i^{\mathcal{WI}}(M,N)$ can be computed by using a right $\mathcal{WI}$-resolution of $M$ or a left $\mathcal{WI}$-resolution of $N$. That is, let $\cdots\rightarrow W_2\rightarrow W_1\rightarrow W_0\rightarrow N\rightarrow 0$ be a left $\mathcal{WI}$-resolution of $N$. Applying the functor $\mbox{Hom}_R(M,-)$ to it, we have the deleted complex $$\cdots\rightarrow \mbox{Hom}_R(M,W_2)\rightarrow \mbox{Hom}_R(M,W_1)\rightarrow \mbox{Hom}_R(M,W_0)\rightarrow  0.$$ Then $\mbox{Ext}_i^R(M,N)$ is exactly the $i$th homology of the above complex. Now there exists a canonical homomorphism $\sigma:\mbox{Ext}_0^{\mathcal{WI}}(M,N)\rightarrow \mbox{Hom}_R(M,N)$. Let $\overline{\operatorname{Ext}}_0^{\mathcal{WI}}(W,M)=\mbox{Ker}\sigma$ and $\overline{\operatorname{Ext}}^0_{\mathcal{WI}}(W,M)=\mbox{Coker}\sigma$.

We next give some characterizations of Gorenstein weak injective modules.

\begin{proposition}\label{prop2.7}
The following are equivalent for an $R$-module $M$:

$(1)$ $M$  is Gorenstein weak injective;

$(2)$ $\operatorname{Ext}^i_R(W,M)=0=\operatorname{Ext}_i^{\mathcal{WI}}(W,M)$ for any $i\geq 1$ and $\overline{\operatorname{Ext}}^0_{\mathcal{WI}}(W,M)=0=\overline{\operatorname{Ext}}_0^{\mathcal{WI}}(W,M)$ for any projective or weak injective $R$-module $W$;

$(3)$ $\operatorname{Ext}^i_R(\widetilde{W},M)=0=\operatorname{Ext}_i^{\mathcal{WI}}(\widetilde{W},M)$ for any $i\geq 1$ and $\overline{\operatorname{Ext}}^0_{\mathcal{WI}}(\widetilde{W},M)=0=\overline{\operatorname{Ext}}_0^{\mathcal{WI}}(\widetilde{W},M)$ for any $R$-module $\widetilde{W}$ with $pd_R(\widetilde{W})<\infty$ or $wid_R(\widetilde{W})<\infty$;

$(4)$ $M$ has an exact left $\mathcal{WI}$-resolution and $\operatorname{Ext}^i_R(W,M)=0$ for any weak injective $R$-module $W$ and any $i\geq 1$;

$(5)$ $M$ has an exact left $\mathcal{WI}$-resolution and $\operatorname{Ext}^i_R(\widetilde{W},M)=0$ for any $R$-module $\widetilde{W}$ with $wid_R(\widetilde{W})<\infty$ and any $i\geq 1$.

Moreover, if $R$ satisfies $wid_R(R)<\infty$ as a left $R$-module, then the above conditions are equivalent to

$(6)$ $\operatorname{Ext}^i_R(W,M)=0$ for any weak injective $R$-module $W$ and any $i\geq 1$.
\end{proposition}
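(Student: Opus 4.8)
The plan is to run the cycle $(1)\Rightarrow(3)\Rightarrow(2)\Rightarrow(4)\Rightarrow(1)$, to prove $(4)\Leftrightarrow(5)$ separately, and to add $(4)\Leftrightarrow(6)$ under the extra hypothesis. For $(1)\Rightarrow(3)$ I would first establish $\operatorname{Ext}^i_R(\widetilde W,M)=0$ for $i\geq1$ by dimension shifting. If $wid_R(\widetilde W)<\infty$, cut a partial injective coresolution of $\widetilde W$ at the $wid_R(\widetilde W)$-th step, obtaining a weak injective cosyzygy exactly as in the proof of Proposition~\ref{either or}; since Lemma~\ref{lemma1} applies with any weak injective (in particular any injective) module in the first argument, an induction on $wid_R(\widetilde W)$ gives the vanishing. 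If $pd_R(\widetilde W)<\infty$, an induction on $pd_R(\widetilde W)$ via a projective cover $0\to K\to P\to\widetilde W\to0$ kills $\operatorname{Ext}^{\geq2}_R(\widetilde W,M)$ for every Gorenstein weak injective $M$, while $\operatorname{Ext}^1_R(\widetilde W,M)$ is handled by the short exact sequence $0\to M'\to I_0\to M\to0$ read off from the complete resolution of $M$ (here $M'$ is again Gorenstein weak injective by Remark~\ref{1-4}(4)). For the relative groups, the left half $\cdots\to I_1\to I_0\to M\to0$ of the complete resolution $\mathbf{I}$ is an exact, $\operatorname{Hom}_R(\mathcal{WI},-)$-exact left $\mathcal{WI}$-resolution of $M$: exactness of $\operatorname{Hom}_R(W,\mathbf{I})$ at $I^0$ together with $M=\operatorname{Ker}(I^0\to I^1)$ forces $\operatorname{Hom}_R(W,I_0)\to\operatorname{Hom}_R(W,M)$ to be onto for every weak injective $W$. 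Splitting this resolution into short exact sequences whose syzygies are all Gorenstein weak injective (Remark~\ref{1-4}(4)) and applying $\operatorname{Hom}_R(\widetilde W,-)$, the $\operatorname{Ext}$-vanishing just proved makes $\cdots\to\operatorname{Hom}_R(\widetilde W,I_1)\to\operatorname{Hom}_R(\widetilde W,I_0)\to\operatorname{Hom}_R(\widetilde W,M)\to0$ exact; hence $\operatorname{Ext}_i^{\mathcal{WI}}(\widetilde W,M)=0$ for $i\geq1$ and the canonical map $\sigma$ is an isomorphism, so $\overline{\operatorname{Ext}}_0^{\mathcal{WI}}(\widetilde W,M)=0=\overline{\operatorname{Ext}}^0_{\mathcal{WI}}(\widetilde W,M)$.

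The implication $(3)\Rightarrow(2)$ is immediate, since a projective module has projective dimension $0$ and a weak injective module has weak injective dimension $0$. For $(2)\Rightarrow(4)$ the weak injective case of $(2)$ is exactly the $\operatorname{Ext}$-part of $(4)$, so it remains to see that any left $\mathcal{WI}$-resolution $\cdots\to W_1\to W_0\to M\to0$ (one exists since every module has a weak injective cover) is exact; applying $\operatorname{Ext}_\bullet^{\mathcal{WI}}(R,-)$ and using $\operatorname{Hom}_R(R,-)=\operatorname{id}$, the groups $\operatorname{Ext}_i^{\mathcal{WI}}(R,M)$ are the homologies of the deleted complex and $\sigma$ is induced by $W_0\to M$, so the hypotheses $\operatorname{Ext}_i^{\mathcal{WI}}(R,M)=0$ $(i\geq1)$ and $\overline{\operatorname{Ext}}_0^{\mathcal{WI}}(R,M)=0=\overline{\operatorname{Ext}}^0_{\mathcal{WI}}(R,M)$ say exactly that the resolution is exact. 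In $(4)\Leftrightarrow(5)$ only $(4)\Rightarrow(5)$ needs comment: the vanishing $\operatorname{Ext}^{\geq1}_R(\mathcal{WI},M)=0$ propagates, through short exact sequences $0\to L\to E\to C\to0$ with $E$ weak injective, to every module of finite weak injective dimension.

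The heart of the matter is $(4)\Rightarrow(1)$. The right half is easy: an injective coresolution $0\to M\to I^0\to I^1\to\cdots$ is automatically $\operatorname{Hom}_R(\mathcal{WI},-)$-exact, because $\operatorname{Ext}^{\geq1}_R(\mathcal{WI},-)$ vanishes on $M$ and hence on each of its cosyzygies. The work is to replace the weak injective modules of the left $\mathcal{WI}$-resolution by injective ones. First check that the class of modules satisfying $(4)$ is closed under the syzygy operation $L=\operatorname{Ker}(W_0\to M)$. Then, given $0\to L\to W_0\xrightarrow{p}M\to0$ with $W_0$ weak injective and $p$ a surjective $\mathcal{WI}$-precover, embed $W_0$ into its injective envelope $I_0$; the quotient $I_0/W_0$ is again weak injective, so $\operatorname{Ext}^1_R(I_0/W_0,M)=0$ lets one extend $p$ to a surjection $\bar p\colon I_0\to M$. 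Putting $L'=\operatorname{Ker}\bar p$, one verifies that $\operatorname{Ext}^{\geq1}_R(\mathcal{WI},L')=0$, that $0\to L'\to I_0\to M\to0$ is $\operatorname{Hom}_R(\mathcal{WI},-)$-exact, that $L'+W_0=I_0$ (so $L'/L\cong I_0/W_0$ is weak injective), and — by the Horseshoe Lemma applied to $0\to L\to L'\to I_0/W_0\to0$ with the inherited left $\mathcal{WI}$-resolution of $L$ — that $L'$ again satisfies $(4)$. Iterating produces injective modules $\cdots\to I_1\to I_0\to M\to0$ which, spliced onto the injective coresolution of $M$, form the required $\operatorname{Hom}_R(\mathcal{WI},-)$-exact complete resolution $\mathbf{I}$ with $M=\operatorname{Coker}(I_1\to I_0)$.

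Finally, under $wid_R(R)<\infty$ we have $(4)\Rightarrow(6)$ trivially, and for $(6)\Rightarrow(4)$ the only missing ingredient is an exact left $\mathcal{WI}$-resolution. Given $(6)$, take a free presentation $R^{(X)}\twoheadrightarrow M$ and embed $R^{(X)}$ into an injective envelope $E$; since $wid_R(R^{(X)})\leq wid_R(R)<\infty$, the cokernel $E/R^{(X)}$ has finite weak injective dimension, so $\operatorname{Ext}^1_R(E/R^{(X)},M)=0$ by the propagation used in $(4)\Rightarrow(5)$, and the presentation extends to a surjection $E\twoheadrightarrow M$ from a weak injective module; factoring it through a $\mathcal{WI}$-cover of $M$ shows that cover is surjective. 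Its kernel again satisfies $(6)$, so iteration builds the exact left $\mathcal{WI}$-resolution and $(4)\Rightarrow(1)$ finishes. I expect the term-by-term replacement in $(4)\Rightarrow(1)$ — preserving $\operatorname{Hom}_R(\mathcal{WI},-)$-exactness while passing from weak injective to injective modules, and carrying all hypotheses of $(4)$ down to each new syzygy — to be the principal difficulty.
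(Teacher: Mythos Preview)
Your overall cycle $(1)\Rightarrow(3)\Rightarrow(2)\Rightarrow(4)\Leftrightarrow(5)$ and the treatment of these implications match the paper's argument closely and are correct. The genuine problem is in $(4)\Rightarrow(1)$. Your preliminary claim that ``the class of modules satisfying $(4)$ is closed under the syzygy operation $L=\operatorname{Ker}(W_0\to M)$'' is not justified: from $0\to L\to W_0\to M\to 0$ with $W_0$ merely weak injective, the long exact sequence only yields injections $\operatorname{Ext}^i_R(W,L)\hookrightarrow\operatorname{Ext}^i_R(W,W_0)$, and there is no reason for $\operatorname{Ext}^i_R(W,W_0)$ to vanish when both $W$ and $W_0$ are weak injective. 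This in turn undermines your Horseshoe step on $0\to L\to L'\to I_0/W_0\to 0$: the relative Horseshoe for left $\mathcal{WI}$-resolutions needs the sequence to be $\operatorname{Hom}_R(\mathcal{WI},-)$-exact at the right, which here reduces to controlling $\operatorname{Ext}^1_R(W,L)$ or $\operatorname{Ext}^1_R(W,W_0)$ --- exactly what is missing. Your $(6)\Rightarrow(4)$ inherits the same defect at the iteration step ``its kernel again satisfies $(6)$''.

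The paper bypasses all of this with a single sharper observation: under the $\operatorname{Ext}$-vanishing hypothesis, the weak injective \emph{cover} $f\colon W_0\to M$ already has $W_0$ injective. Embed $W_0\hookrightarrow E$ with $E$ injective; the cokernel $N$ is weak injective, so $\operatorname{Ext}^1_R(N,M)=0$ and $f$ extends to $g\colon E\to M$. Since $f$ is a cover and $E$ is weak injective, $g=fh$ for some $h\colon E\to W_0$; then $f(hi)=f$ forces $hi$ to be an automorphism, so $W_0$ is a direct summand of $E$, hence injective. Now $\operatorname{Ext}^{\geq1}_R(W,W_0)=0$ trivially, the precover property gives $\operatorname{Ext}^1_R(W,\operatorname{Ker}f)=0$, and one iterates directly --- no envelope-replacement or Horseshoe is needed. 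For $(6)\Rightarrow(1)$ the paper runs the same construction (which uses only the $\operatorname{Ext}$-vanishing) to obtain a $\operatorname{Hom}_R(\mathcal{WI},-)$-exact complex $\mathbf{I}$ of injectives, and then uses $wid_R(R)<\infty$ together with an induction on $wid_R(\widetilde W)$ to see that $\operatorname{Hom}_R(R,\mathbf{I})\cong\mathbf{I}$ is exact.
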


\begin{proof}
(1) $\Rightarrow$  (3). We use induction on $n=wid_R(\widetilde{W})<\infty$. If $n=0$, then the assertions follow from the definition of the functor $\mbox{Ext}^{\mathcal{WI}}_i(-,-)$ and Lemma $\ref{lemma1}$. Suppose that the results hold for the case $n-1$. Consider an exact sequence $0\rightarrow \widetilde{W}\rightarrow W\rightarrow V\rightarrow 0$ with $W$ weak injective. Then $wid_R(V)=n-1$. Assume that the sequence $\emph{\textbf{I}}$ is defined as in Definition $\ref{Gorenstein weak injective}$.
 Since each term of $\emph{\textbf{I}}$ is injective, we have the following exact sequence of complexes
$$
0\rightarrow \mbox{Hom}_R(V,\emph{\textbf{I}})\rightarrow\mbox{Hom}_R(W,\emph{\textbf{I}})\rightarrow \mbox{Hom}_R(\widetilde{W},\emph{\textbf{I}})\rightarrow 0.
$$
Note that the complex $\mbox{Hom}_R(W,\emph{\textbf{I}})$  is exact by Definition $\ref{Gorenstein weak injective}$, and the complex $\mbox{Hom}_R(V,\emph{\textbf{I}})$ is  exact by the induction hypothesis. So the complex  $\mbox{Hom}_R(\widetilde{W},\emph{\textbf{I}})$ is also exact.
It is easy to verify that (3) holds. By a similar argument, we may also get that (3) holds for the case $pd_R(\widetilde{W})<\infty$.

(3) $\Rightarrow$ (2) is trivial.

(2) $\Rightarrow$ (4). Since every $R$-module has a weak injective cover, one easily get that every $R$-module has a left $\mathcal{WI}$-resolution.  Let $\cdots\rightarrow W_2\rightarrow W_1\rightarrow W_0\rightarrow M\rightarrow 0$ be a left $\mathcal{WI}$-resolution of $M$. If we set $W=R$, we can easily get that it is also exact by assumption.

(4) $\Rightarrow$ (5) holds by dimension shifting.

(5) $\Rightarrow$ (1). Let $f:W_0\rightarrow M$ be a weak injective cover of $M$. Consider an exact sequence $(\xi): \xymatrix@C=0.5cm{
  0 \ar[r] & W_0 \ar[r]^{i} & E \ar[r]^{} & N \ar[r] & 0 }$ with $E$  injective. It is easy to verify that $N$ is  weak injective. So $\operatorname{Ext}^i_R(N,M)=0$ by hypothesis, and hence  we have the following exact sequence $$\xymatrix@C=0.5cm{\mbox{Hom}_R(E,M)\ar[r]^{i^*} &\mbox{Hom}_R(W_0,M)\ar[r]& 0}$$ by applying the functor $\mbox{Hom}_R(-,M)$ to the sequence $(\xi)$. Thus, for $f:W_0\rightarrow M$, there exists $g:E\rightarrow M$ such that $gi=i^*(g)=f$. Moreover, since $f$ is a weak injective cover, there exists $h:E\rightarrow W_0$ such that $fh=g$. Hence we have $fhi=f$ and thus $hi$ is an isomorphism. This implies that $W_0$ is injective as a direct summand of $E$. Since $f$ is a weak injective cover, we have an exact sequence $\mbox{Hom}_R(W,W_0)\rightarrow \mbox{Hom}_R(W,\mbox{Im}f)\rightarrow 0$ for any weak injective $R$-module $W$. Moreover, from the following exact sequence
  $$
  \mbox{Hom}_R(W,W_0)\rightarrow \mbox{Hom}_R(W,\mbox{Im}f)\rightarrow \mbox{Ext}^1_R(W,\mbox{Ker}f)\rightarrow \mbox{Ext}^1_R(W,W_0)=0
  $$
  we obtain that $\mbox{Ext}^1_R(W,\mbox{Ker}f)=0$.  We repeat the argument by replacing $M$ with $\mbox{Ker}f$ to get a weak injective cover $f_1:W_1\rightarrow \mbox{Ker}f$ and $W_1$ is injective. Continue this process, we may obtain a complex $(\varrho): \cdots\rightarrow W_1\rightarrow W_0\rightarrow M\rightarrow 0$, where each $W_i$ is weak injective, such that  the functor $\operatorname{Hom}_R(W,-)$ leaves this sequence exact whenever $W$ is a weak injective $R$-module. Moreover,  we have $\operatorname{Ext}_i^{\mathcal{WI}}(R,M)=0$ for any $i\geq1$ and $\operatorname{Ext}_0^{\mathcal{WI}}(R,M)\cong M$ by hypothesis. It follows then that the complex $(\varrho)$ is exact. On the other hand, we take an injective resolution of $M$: $0\rightarrow M\rightarrow I^0\rightarrow I^1\rightarrow \cdots$. Since $\operatorname{Ext}^i_R(W,M)=0$ for any weak injective $R$-module $W$ and any $i\geq 1$, it is $\mbox{Hom}_R(\mathcal{WI},-)$-exact. Assembling this sequence with the sequence $(\varrho)$, we get the desired sequence as in Definition $\ref{Gorenstein weak injective}$, and hence $M$  is Gorenstein weak injective.

(1) $\Rightarrow$ (6) follows from Lemma $\ref{lemma1}$.

(6) $\Rightarrow$ (1).  As a similar argument to the proof of (5) $\Rightarrow$ (1), we have a $\mbox{Hom}_R(\mathcal{WI},-)$-exact complex $$
 \emph{\textbf{I}}=\xymatrix@C=0.5cm{
  \cdots \ar[r] & I_1 \ar[r]^{} & I_0 \ar[r]^{} & I^0 \ar[r]^{} & I^1 \ar[r] & \cdots }
$$
of injective $R$-modules with $M=\operatorname{Coker}(I_1\rightarrow I_0)$. By induction, it is easy to verify  that the complex $\mbox{Hom}_R(\widetilde{W},\emph{\textbf{I}})$ is exact for any $R$-module with $wid_R(\widetilde{W})=n<\infty$.  In particular, $\mbox{Hom}_R(R,\emph{\textbf{I}})$ is  exact by assumption, and thus $\emph{\textbf{I}}$ is exact. Therefore, $M$ is Gorenstein weak injective.
\end{proof}

\begin{remark}
Following the proof of (5) $\Rightarrow$ (1) in Proposition $\ref{prop2.7}$, the kernel of a weak injective cover of any Gorenstein weak injective $R$-module is Gorenstein weak injective.
\end{remark}

\begin{corollary}\label{coro2.9}
An $R$-module $M$ is Gorenstein weak injective if and only if there is an exact sequence $0\rightarrow L\rightarrow I\rightarrow M\rightarrow 0$ with $I$  injective and $L$ Gorenstein weak injective.
\end{corollary}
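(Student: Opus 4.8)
The plan is to prove the two implications separately: the forward (``only if'') direction is immediate from the definition, while the reverse (``if'') direction is reduced to the characterization in Proposition~\ref{prop2.7}(4). For the forward direction, if $M$ is Gorenstein weak injective, fix an exact sequence of injective $R$-modules $\cdots\to I_1\to I_0\to I^0\to\cdots$ as in Definition~\ref{Gorenstein weak injective}, so that $M=\operatorname{Coker}(I_1\to I_0)$. Truncating at $I_0$ produces a short exact sequence $0\to L\to I_0\to M\to 0$ with $L=\operatorname{Im}(I_1\to I_0)$, where $I_0$ is injective and $L$ is Gorenstein weak injective by Remark~\ref{1-4}(4); this is the required sequence.

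For the reverse direction, suppose $0\to L\to I\to M\to 0$ is exact with $I$ injective and $L$ Gorenstein weak injective, and verify the two conditions of Proposition~\ref{prop2.7}(4). First, applying $\operatorname{Hom}_R(W,-)$ to this short exact sequence for a weak injective $R$-module $W$, and using that $I$ is injective while $\operatorname{Ext}^i_R(W,L)=0$ for all $i\geq 1$ by Lemma~\ref{lemma1}, the resulting long exact sequence gives $\operatorname{Ext}^i_R(W,M)=0$ for all $i\geq 1$; since in particular $\operatorname{Ext}^1_R(W,L)=0$, the same long exact sequence also shows that $0\to L\to I\to M\to 0$ is $\operatorname{Hom}_R(\mathcal{WI},-)$-exact, a fact used in a moment. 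Second, since $L$ is Gorenstein weak injective, Proposition~\ref{prop2.7} provides an exact left $\mathcal{WI}$-resolution $\cdots\to W_1\to W_0\to L\to 0$ of $L$, that is, a complex that is simultaneously exact and $\operatorname{Hom}_R(\mathcal{WI},-)$-exact with every $W_j$ weak injective. Splicing it with the monomorphism $L\hookrightarrow I$ yields the complex $\cdots\to W_1\to W_0\to I\to M\to 0$, in which $W_0\to I$ is the composite $W_0\to L\hookrightarrow I$; this is exact, since $\operatorname{Ker}(W_0\to I)=\operatorname{Ker}(W_0\to L)=\operatorname{Im}(W_1\to W_0)$ and $\operatorname{Coker}(W_0\to I)=I/L\cong M$ while the higher spots are unchanged, and each of its terms ($W_j$ and $I$) is weak injective. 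The remaining point is that this complex stays exact after applying $\operatorname{Hom}_R(W,-)$ for every weak injective $W$: at the terms $W_j$ with $j\geq 1$ this is the $\operatorname{Hom}_R(\mathcal{WI},-)$-exactness of the resolution of $L$, and at $W_0$, at $I$ and at $M$ it follows by combining the surjectivity of $\operatorname{Hom}_R(W,W_0)\to\operatorname{Hom}_R(W,L)$ (again from that $\operatorname{Hom}_R(\mathcal{WI},-)$-exactness), the injectivity of $\operatorname{Hom}_R(W,L)\to\operatorname{Hom}_R(W,I)$ (left exactness of $\operatorname{Hom}$), and the $\operatorname{Hom}_R(\mathcal{WI},-)$-exactness of $0\to L\to I\to M\to 0$ obtained above. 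Hence $M$ has an exact left $\mathcal{WI}$-resolution, and Proposition~\ref{prop2.7}(4) yields that $M$ is Gorenstein weak injective.

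The one place where genuine care is needed is the diagram chase at the seam of the spliced complex: the crux is that $\operatorname{Hom}_R(W,-)$ preserves the monomorphism $L\hookrightarrow I$ (so kernels computed into $I$ agree with those computed into $L$) while $\operatorname{Hom}_R(W,W_0)\to\operatorname{Hom}_R(W,L)$ remains surjective, after which the matching of kernels and images at $W_0$, $I$ and $M$ is forced. Everything else is a routine long-exact-sequence argument.
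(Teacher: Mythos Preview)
Your proof is correct and follows essentially the same route as the paper's: both directions match, and for the converse you verify the two conditions of Proposition~\ref{prop2.7}(4) by using Lemma~\ref{lemma1} on $L$, then splicing an exact left $\mathcal{WI}$-resolution of $L$ with the short exact sequence $0\to L\to I\to M\to 0$. The only cosmetic difference is that the paper pulls the left resolution of $L$ (with \emph{injective} terms) straight from Definition~\ref{Gorenstein weak injective}, whereas you invoke Proposition~\ref{prop2.7} to obtain one with weak injective terms; since injective modules are weak injective, the splicing and the final appeal to Proposition~\ref{prop2.7}(4) are identical.
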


\begin{proof}
The necessity is clear. For the sufficiency, consider the exact sequence $(\diamondsuit): 0\rightarrow L\rightarrow I\rightarrow M\rightarrow 0$ with $I$  injective and $L$ Gorenstein weak injective. Since $\mbox{Ext}^i_R(W,L)=0$ for any weak injective $R$-module $W$ and any $i\geq 1$ by Lemma $\ref{lemma1}$, it is easy verify that $\mbox{Ext}^i_R(W,M)=0$ for any weak injective $R$-module $W$ and the sequence $(\diamondsuit)$ is $\mbox{Hom}_R(\mathcal{WI},-)$-exact. Since $L$ is Gorenstein weak injective, by the definition, there is a $\mbox{Hom}_R(\mathcal{WI},-)$-exact exact sequence $\cdots\rightarrow I_2\rightarrow I_1\rightarrow L\rightarrow 0$ with each $I_i$ injective. Assembling this sequence with the sequence $(\diamondsuit)$, we have the following commutative diagram:
$$\xymatrix@=0.4cm{
  \cdots \ar[rr] && I_2 \ar[rr]^{} &&I_1 \ar[rd] \ar[rr]^{} && I\ar[rr]&&M \ar[rr]^{}  && 0\\
  &&&&&L\ar[ru]\ar[rd]&&&&&\\
  &&&&0\ar[ru]&&0&&&&}$$
which shows that $M$ is Gorenstein weak injective by Proposition $\ref{prop2.7}$.
\end{proof}

\begin{proposition}\label{extension closed}
Given an exact sequence $0\rightarrow L\rightarrow M\rightarrow N\rightarrow 0$. If $L$ and $N$ are Gorenstein weak injective, then so is $M$. That is, the class of Gorenstein weak injective $R$-modules is closed under extensions.
\end{proposition}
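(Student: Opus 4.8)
The plan is to verify condition $(4)$ of Proposition \ref{prop2.7} for $M$, namely that $M$ admits an exact left $\mathcal{WI}$-resolution and that $\operatorname{Ext}^i_R(W,M)=0$ for every weak injective $R$-module $W$ and every $i\geq 1$.

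First I would dispose of the $\operatorname{Ext}$-vanishing. Since $L$ and $N$ are Gorenstein weak injective, Lemma \ref{lemma1} gives $\operatorname{Ext}^i_R(W,L)=0=\operatorname{Ext}^i_R(W,N)$ for all weak injective $W$ and all $i\geq 1$. Applying $\operatorname{Hom}_R(W,-)$ to $0\rightarrow L\rightarrow M\rightarrow N\rightarrow 0$ and reading off the long exact sequence then forces $\operatorname{Ext}^i_R(W,M)=0$ for all $i\geq 1$; it also shows that the given short exact sequence is itself $\operatorname{Hom}_R(\mathcal{WI},-)$-exact.

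For the resolution I would run a horseshoe-type construction. By Definition \ref{Gorenstein weak injective} together with Remark \ref{1-4}$(4)$, each of $L$ and $N$ has a $\operatorname{Hom}_R(\mathcal{WI},-)$-exact exact complex $\cdots\rightarrow I^L_1\rightarrow I^L_0\rightarrow L\rightarrow 0$, resp. $\cdots\rightarrow I^N_1\rightarrow I^N_0\rightarrow N\rightarrow 0$, of injective $R$-modules all of whose syzygies are again Gorenstein weak injective. At each stage the required lifting is available: if $I^N_n$ is one of these injective (hence weak injective) modules and $Z$ is a Gorenstein weak injective syzygy of $L$, then $\operatorname{Ext}^1_R(I^N_n,Z)=0$ by Lemma \ref{lemma1}, so the relevant map out of $I^N_n$ lifts along the epimorphism whose kernel is $Z$. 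This yields an exact complex $\cdots\rightarrow I^L_1\oplus I^N_1\rightarrow I^L_0\oplus I^N_0\rightarrow M\rightarrow 0$ of injective modules whose $n$-th syzygy $Z_n$ fits into a short exact sequence $0\rightarrow Z^L_n\rightarrow Z_n\rightarrow Z^N_n\rightarrow 0$ with $Z^L_n$ and $Z^N_n$ Gorenstein weak injective (and similarly for $\operatorname{Ker}(I^L_0\oplus I^N_0\rightarrow M)$).

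To finish, I would check that this complex is $\operatorname{Hom}_R(\mathcal{WI},-)$-exact, which amounts to $\operatorname{Ext}^1_R(W,Z_n)=0$ for every weak injective $W$ and every $n$; this follows from Lemma \ref{lemma1} applied to $Z^L_n$ and $Z^N_n$ together with the long exact sequence of $0\rightarrow Z^L_n\rightarrow Z_n\rightarrow Z^N_n\rightarrow 0$. Hence $M$ has an exact left $\mathcal{WI}$-resolution, and the implication $(4)\Rightarrow(1)$ in Proposition \ref{prop2.7} gives that $M$ is Gorenstein weak injective. The main obstacle is the bookkeeping of the horseshoe induction: one must keep track, at every step, that the module being covered is a Gorenstein weak injective syzygy, so that Lemma \ref{lemma1} simultaneously supplies the lifting and the $\operatorname{Ext}^1$-vanishing, and one must maintain the compatible short exact sequences $0\rightarrow Z^L_n\rightarrow Z_n\rightarrow Z^N_n\rightarrow 0$ relating the syzygies of $L$, $M$, $N$. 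If one prefers to avoid an explicit infinite induction, a single step of the construction produces $0\rightarrow M'\rightarrow I\rightarrow M\rightarrow 0$ with $I$ injective and $M'$ an extension of two Gorenstein weak injective modules; combined with the $\operatorname{Ext}$-vanishing already shown for $M$, one can then conclude via Corollary \ref{coro2.9}.
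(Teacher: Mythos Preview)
Your main argument is correct and is essentially the same as the paper's: first obtain $\operatorname{Ext}^i_R(W,M)=0$ from Lemma~\ref{lemma1} and the long exact sequence, then run a horseshoe construction on the $\operatorname{Hom}_R(\mathcal{WI},-)$-exact injective left resolutions of $L$ and $N$ (using $\operatorname{Ext}^1_R(I^N_n,Z)=0$ for the lifting and for the $\operatorname{Hom}_R(\mathcal{WI},-)$-exactness of the assembled resolution), and conclude via the implication $(4)\Rightarrow(1)$ of Proposition~\ref{prop2.7}. The paper does exactly this, writing out one step of the horseshoe in detail and invoking the Snake Lemma to identify $\operatorname{Ker} d_0$ as an extension of $\operatorname{Ker} d_0'$ by $\operatorname{Ker} d_0''$.

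One caution: your final ``alternative'' via Corollary~\ref{coro2.9} is circular as stated. A single horseshoe step gives $0\rightarrow M'\rightarrow I\rightarrow M\rightarrow 0$ with $I$ injective and $M'$ an extension of two Gorenstein weak injective modules, but Corollary~\ref{coro2.9} requires $M'$ itself to be Gorenstein weak injective, and that is precisely the proposition you are proving. The $\operatorname{Ext}$-vanishing for $M$ does not substitute for this hypothesis in Corollary~\ref{coro2.9}. So either drop that remark or acknowledge that it only reduces the problem to the same statement for $M'$, which is no genuine shortcut over the full induction you already carried out.
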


\begin{proof}
Since $L$ and $N$ are Gorenstein weak injective, we have that $\mbox{Ext}^i_R(W,L)=0=\mbox{Ext}^i_R(W,N)$ for any weak injective $R$-module $W$ and any $i\geq 1$ by Lemma $\ref{lemma1}$. It is easy to verify that $\mbox{Ext}^i_R(W,M)=0$ for any weak injective $R$-module $W$  and any $i\geq 1$. So, to prove that $M$ is Gorenstein weak injective, it suffices to show that $M$ has an exact left $\mathcal{WI}$-resolution by Proposition $\ref{prop2.7}$. By hypothesis, there exists the following exact left $\mathcal{WI}$-resolution of $L$ and $N$:
\begin{gather*}
   \emph{\textbf{I}}'= \xymatrix@C=0.5cm{
  \cdots \ar[r] & I_1' \ar[r]^{d_1'} &I_0' \ar[r]^{d_0'} & L \ar[r]^{}  & 0} \\
   \emph{\textbf{I}}''= \xymatrix@C=0.5cm{
  \cdots \ar[r] & I_1'' \ar[r]^{d_1''} &I_0'' \ar[r]^{d_0''} & N \ar[r]^{}  & 0},
  \end{gather*}
where all $I_i'$ and $I_i''$ are injective, and all kernels of $\emph{\textbf{I}}'$ and $\emph{\textbf{I}}''$ are Gorenstein weak injective (such sequences exist by the definition of Gorenstein weak injective). Consider the following diagram:
 $$
 \xymatrix{
 0\ar[r]&I_0'\ar[r]^{\!\!{1\choose 0}}\ar[d]^{d_0'}&I_0'\oplus I_0''\ar[r]^{\ (0\ 1)}&I_0''\ar[r]\ar[d]^{d_0''}&0\\
 0\ar[r]&L\ar[r]^{f}\ar[d]&M\ar[r]^{g}&N\ar[r]\ar[d]&0\\
 &0&&0&
 }
 $$
 Since $L$ is Gorenstein weak injective, $\mbox{Ext}^1_R(I_0'',L)=0$, and thus we have the following exact sequence
 $$
 \xymatrix@C=0.5cm{
   0 \ar[r] & \mbox{Hom}_R(I_0'',L) \ar[r]^{f_*} & \mbox{Hom}_R(I_0'',M) \ar[r]^{g_*} & \mbox{Hom}_R(I_0'',N) \ar[r] & 0 }.
 $$
 Since $g_*$ is epimorphic, there exists $\alpha:I_0''\rightarrow M$ such that $d_0''=g_*(\alpha)=g\alpha$. For any $(e_0',e_0'')\in I_0'\oplus I_0''$, we define  $d_0:I_0'\oplus I_0''\rightarrow M$ by $d_0(e_0',e_0'')=fd_0'(e_0')+\alpha(e_0'')$. Then it is easy to verify that $d_0$ makes the above diagram commute. By Snake Lemma, we have the following commutative diagram:
 $$
 \xymatrix{
 &0\ar[d]&0\ar[d]&0\ar[d]&\\
 0\ar[r]&\mbox{Ker}d_0'\ar[r]\ar[d]&\mbox{Ker}d_0\ar[r]\ar[d]&\mbox{Ker}d_0''\ar[r]\ar[d]&0\\
 0\ar[r]&I_0'\ar[r]^{\!\!{1\choose 0}}\ar[d]^{d_0'}&I_0'\oplus I_0''\ar[r]^{\ (0\ 1)}\ar[d]^{d_0}&I_0''\ar[r]\ar[d]^{d_0''}&0\\
 0\ar[r]&L\ar[r]^{f}\ar[d]&M\ar[r]^{g}\ar[d]&N\ar[r]\ar[d]&0\\
 &0&0&0&
 }
 $$
Since $\mbox{Ker}d_0'$ and $\mbox{Ker}d_0''$ are Gorenstein weak injective,  $\mbox{Ext}^i_R(W,\mbox{Ker}d_0')=0=\mbox{Ext}^i_R(W,\mbox{Ker}d_0'')$ for any weak injective $R$-module $W$ and $i\geq
1$ by Lemma $\ref{lemma1}$. It is easy to verify that  $\mbox{Ext}^i_R(W,\mbox{Ker}d_0)=0$  for any weak injective $R$-module $W$ and $i\geq
1$. In particular, the sequence $\xymatrix@C=0.5cm{
  0 \ar[r] & \mbox{Ker}d_0 \ar[r]^{} & I_0'\oplus I_0'' \ar[r]^{} & M \ar[r] & 0 }$ is $\mbox{Hom}_R(\mathcal{WI},-)$-exact.
Repeating this process, we may get the following commutative diagram:
$$
 \xymatrix{
 &\vdots\ar[d]&\vdots\ar[d]&\vdots\ar[d]&\\
 0\ar[r]&I_1'\ar[r]^{\!\!{1\choose 0}}\ar[d]^{d_1'}&I_1'\oplus I_1''\ar[r]^{\ (0\ 1)}\ar[d]^{d_1}&I_1''\ar[r]\ar[d]^{d_1''}&0\\
 0\ar[r]&I_0'\ar[r]^{\!\!{1\choose 0}}\ar[d]^{d_0'}&I_0'\oplus I_0''\ar[r]^{\ (0\ 1)}\ar[d]^{d_0}&I_0''\ar[r]\ar[d]^{d_0''}&0\\
 0\ar[r]&L\ar[r]^{f}\ar[d]&M\ar[r]^{g}\ar[d]&N\ar[r]\ar[d]&0\\
 &0&0&0&
 }
 $$
and a $\mbox{Hom}_R(\mathcal{WI},-)$-exact exact sequence $\xymatrix@C=0.5cm{
  \cdots \ar[r] & I_1'\oplus I_1'' \ar[r]^{} &I_0'\oplus I_0'' \ar[r]^{} & M \ar[r]^{}  & 0}$, where each $I_i'\oplus I_i''$ is injective. Therefore, $M$ is Gorenstein weak injective.
\end{proof}

Let $\mathcal{C}$ be a class of $R$-modules. Recall from \cite{Ho} that $\mathcal{C}$ is \emph{injectively resolving} if the class $\mathcal{I}$ of injective $R$-modules satisfies $\mathcal{I}\subseteq \mathcal{C}$, and for any exact sequence $0\rightarrow L\rightarrow M\rightarrow N\rightarrow 0$ with $L\in \mathcal{C}$, $M\in \mathcal{C}$ if and only if $N\in \mathcal{C}$.

We have the following proposition.

\begin{proposition}\label{injectively coresolving}
The class $\mathcal{GWI}$ is injectively resolving.
\end{proposition}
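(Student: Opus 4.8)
The plan is to verify the two requirements in the definition of an injectively resolving class. The inclusion $\mathcal{I}\subseteq\mathcal{GWI}$ is precisely Remark~\ref{1-4}(1). For the second requirement, fix a short exact sequence $0\rightarrow L\rightarrow M\rightarrow N\rightarrow 0$ with $L\in\mathcal{GWI}$; we must show that $M\in\mathcal{GWI}$ if and only if $N\in\mathcal{GWI}$. The implication ``$N\in\mathcal{GWI}\Rightarrow M\in\mathcal{GWI}$'' is immediate, being a special case of the closure of $\mathcal{GWI}$ under extensions established in Proposition~\ref{extension closed}. So the real task is the converse.

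Assume $M\in\mathcal{GWI}$. By Corollary~\ref{coro2.9} there is a short exact sequence $0\rightarrow K\rightarrow I\xrightarrow{\ \theta\ } M\rightarrow 0$ with $I$ injective and $K$ Gorenstein weak injective. Write $p\colon M\rightarrow N$ for the given epimorphism and put $K':=\theta^{-1}(L)\subseteq I$, which is exactly the kernel of the composite $p\theta\colon I\rightarrow N$. Since $p\theta$ is surjective, we obtain a short exact sequence $0\rightarrow K'\rightarrow I\xrightarrow{\ p\theta\ } N\rightarrow 0$; and restricting $\theta$ to $K'$ (it maps onto $L$ because $\theta$ is onto and $L\subseteq M$, with kernel $\operatorname{Ker}\theta=K$) yields a short exact sequence $0\rightarrow K\rightarrow K'\rightarrow L\rightarrow 0$. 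As $K$ and $L$ are both Gorenstein weak injective, Proposition~\ref{extension closed} applied to this last sequence gives $K'\in\mathcal{GWI}$. The sequence $0\rightarrow K'\rightarrow I\rightarrow N\rightarrow 0$ then exhibits $N$ as the cokernel of an embedding of a Gorenstein weak injective module into an injective module, so $N\in\mathcal{GWI}$ by Corollary~\ref{coro2.9}, which completes the argument.

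I do not anticipate a genuine obstacle here: the only point requiring (entirely elementary) care is the bookkeeping around $K'=\theta^{-1}(L)$ — namely checking that the same module fits simultaneously into the two short exact sequences $0\to K'\to I\to N\to 0$ and $0\to K\to K'\to L\to 0$ — after which everything reduces to direct appeals to Remark~\ref{1-4}, Corollary~\ref{coro2.9}, and Proposition~\ref{extension closed}, with no further homological input needed. It is worth noting that the naive alternative of embedding $L$ into an injective and forming a pushout would produce a sequence $0\to I^0\to P\to N\to 0$ with $P\in\mathcal{GWI}$ and $I^0$ injective, from which one could conclude $N\in\mathcal{GWI}$ only after passing to a direct summand ($P\cong I^0\oplus N$); since closure of $\mathcal{GWI}$ under summands is not yet available at this point, the pullback/preimage route used above is the one to take.
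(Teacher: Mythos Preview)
Your proof is correct and follows essentially the same route as the paper's: the paper also starts from Corollary~\ref{coro2.9} to obtain $0\to G\to I_0\to M\to 0$ and then forms the pullback along $L\hookrightarrow M$, producing a module $Q$ that is exactly your $K'=\theta^{-1}(L)$, together with the two short exact sequences $0\to G\to Q\to L\to 0$ and $0\to Q\to I_0\to N\to 0$. The only cosmetic difference is that, at the end, the paper spells out the exact left $\mathcal{WI}$-resolution of $N$ by splicing a resolution of $Q$ with $0\to Q\to I_0\to N\to 0$, whereas you (more economically) simply invoke Corollary~\ref{coro2.9} once more.
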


\begin{proof}
It is obvious that $\mathcal{I}\subseteq \mathcal{GWI}$. So, for any exact sequence $0\rightarrow L\rightarrow M\rightarrow N\rightarrow 0$ with $L$ Gorenstein weak injective, it suffices to show that if $M$ is Gorenstein weak injective, then so is $N$ by Proposition $\ref{extension closed}$. It is easy to verify that $\mbox{Ext}^i_R(W,N)=0$ for any weak injective $R$-module $W$ and any $i\geq
1$. Since $M$ is Gorenstein weak injective, we have an exact sequence $0\rightarrow G\rightarrow I_0\rightarrow M\rightarrow 0$ with $I_0$  injective and $G$ Gorenstein weak injective by Corollary $\ref{coro2.9}$. Consider the following pull-back diagram:
 $$
 \xymatrix{
 &0\ar[d]&0\ar[d]&&\\
 &G\ar@{=}[r]\ar[d]&G\ar[d]&&\\
 0\ar[r]&Q\ar[r]\ar[d]&I_0\ar[r]\ar[d]&N\ar[r]\ar@{=}[d]&0\\
 0\ar[r]&L\ar[r]\ar[d]&M\ar[r]\ar[d]&N\ar[r]&0\\
 &0&0&&
 }
 $$
 By Proposition $\ref{extension closed}$ and the second column in the above diagram, we have that $Q$ is Gorenstein weak injective.
 Thus $\mbox{Ext}^1_R(W,Q)=0$ for any weak injective $R$-module $W$, and so the exact sequence $(\natural):
  0 \rightarrow Q \rightarrow I_0 \rightarrow N \rightarrow 0 $ is $\mbox{Hom}_R(\mathcal{WI},-)$-exact. In addition, since $Q$ is Gorenstein weak injective, by the definition of Gorenstein weak injective $R$-modules, we have a $\mbox{Hom}_R(\mathcal{WI},-)$-exact exact sequence $\xymatrix@C=0.5cm{
  \cdots \ar[r] & I_2 \ar[r]^{} &I_1 \ar[r]^{} & Q \ar[r]^{}  & 0}$, where each $I_i$ is injective. Assembling this sequence with the sequence $(\natural)$, we get the following commutative diagram
$$\xymatrix@=0.4cm{
  \cdots \ar[rr] && I_2 \ar[rr]^{} &&I_1 \ar[rd] \ar[rr]^{} && I_0\ar[rr]&&N \ar[rr]^{}  && 0\\
  &&&&&Q\ar[ru]\ar[rd]&&&&&\\
  &&&&0\ar[ru]&&0&&&&}$$
which shows that $N$ is Gorenstein weak injective.
\end{proof}

\begin{proposition}\label{direct summands}
The class $\mathcal{GWI}$ is closed under direct summands.
\end{proposition}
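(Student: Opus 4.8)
The plan is to use the ``Eilenberg swindle''-style trick that is standard for showing that Gorenstein-type classes are closed under direct summands, but adapted to work through the characterization in Proposition $\ref{prop2.7}$(4) and the extension-closure established in Proposition $\ref{extension closed}$. So suppose $M\oplus M'$ is Gorenstein weak injective; I want to show $M$ is Gorenstein weak injective. The first and essentially free observation is that $\operatorname{Ext}^i_R(W,M)=0$ for every weak injective $W$ and every $i\geq 1$: this follows because $\operatorname{Ext}^i_R(W,M)$ is a direct summand of $\operatorname{Ext}^i_R(W,M\oplus M')=0$ by Lemma $\ref{lemma1}$. By Proposition $\ref{prop2.7}$ (the equivalence (1)$\Leftrightarrow$(4)), it therefore remains only to produce an \emph{exact} left $\mathcal{WI}$-resolution of $M$, i.e.\ a $\operatorname{Hom}_R(\mathcal{WI},-)$-exact exact sequence $\cdots\to W_1\to W_0\to M\to 0$ with each $W_i$ weak injective.

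The key step is to manufacture such a resolution from the one for $M\oplus M'$. Since $M\oplus M'$ is Gorenstein weak injective, by Corollary $\ref{coro2.9}$ there is an exact sequence $0\to G\to I\to M\oplus M'\to 0$ with $I$ injective and $G$ again Gorenstein weak injective; iterating, we get a $\operatorname{Hom}_R(\mathcal{WI},-)$-exact exact complex $\cdots\to I_1\to I_0\to M\oplus M'\to 0$ with each $I_j$ injective and all syzygies Gorenstein weak injective. Now I would form the ``mapped'' complex: consider $N=M'\oplus M\oplus M'\oplus M\oplus\cdots$ (or more cleanly, work with $M\oplus N'$ where $N'$ is a suitable countable direct sum of copies of $M$ and $M'$) and splice together shifted copies of the above resolution of $M\oplus M'$ so that the telescope collapses; concretely, $M\oplus(\bigoplus_{\mathbb{N}}(M\oplus M'))$ has an exact left $\mathcal{WI}$-resolution by injectives (using that a direct sum of the injective resolutions is still termwise injective only if the ring is Noetherian — so instead I must be careful and use that a \emph{finite} manipulation suffices). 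The cleanest route: set $\overline{M}=M\oplus M'$, which is Gorenstein weak injective with syzygies $\Omega^j\overline{M}$ all Gorenstein weak injective. From the split exact sequence $0\to M\to \overline M\to M'\to 0$ and the dual $0\to M'\to\overline M\to M\to 0$, build an exact sequence of Gorenstein weak injective modules by alternating; then invoke Corollary $\ref{coro2.9}$ plus Proposition $\ref{injectively coresolving}$ to descend.

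More precisely, here is the step I would actually carry out. Because $M$ is a direct summand of the Gorenstein weak injective module $\overline M=M\oplus M'$, pick the exact sequence $0\to L\to I_0\to\overline M\to 0$ from Corollary $\ref{coro2.9}$ with $I_0$ injective, $L$ Gorenstein weak injective. Pulling back along the split inclusion $M\hookrightarrow\overline M$ gives $0\to L\to P\to M\to 0$ with $P\to M$ obtained from $I_0\to\overline M$; since $M$ is a summand of $\overline M$, $P$ is a summand of $I_0\oplus(\text{something injective})$, hence — after adding a suitable injective complement — we may arrange an exact sequence $0\to L'\to I_0'\to M\to 0$ with $I_0'$ injective and $L'$ a direct summand of a Gorenstein weak injective module (namely of $L\oplus(\Omega\overline M)$ or similar). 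This is where an induction/bootstrapping is needed: $L'$ is again a summand of a Gorenstein weak injective, and it has vanishing $\operatorname{Ext}^i_R(W,-)$; repeating produces $\cdots\to I_1'\to I_0'\to M\to 0$ exact with each $I_j'$ injective, and the $\operatorname{Hom}_R(\mathcal{WI},-)$-exactness of each short piece follows from $\operatorname{Ext}^1_R(W,L'_{\text{syzygy}})=0$, which holds since each syzygy is still a summand of a Gorenstein weak injective. Splicing this resolution with the injective coresolution $0\to M\to I^0\to I^1\to\cdots$ (which is $\operatorname{Hom}_R(\mathcal{WI},-)$-exact by the Ext-vanishing already noted) yields a complete sequence as in Definition $\ref{Gorenstein weak injective}$.

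The main obstacle I anticipate is the bootstrapping in the previous paragraph: I must keep control of the fact that each successive syzygy is a direct summand of a Gorenstein weak injective module (not merely that its relevant Ext groups vanish), because only then can I re-apply Corollary $\ref{coro2.9}$ to continue building injective terms. The slick way to sidestep this is the classical swindle: observe that $M\oplus\big(\bigoplus_{n\geq 1}\overline M\big)\cong M'\oplus\big(\bigoplus_{n\geq 1}\overline M\big)$-type identities let one realize $M$ as a syzygy inside a genuine Gorenstein weak injective module only when the class is closed under countable sums, which (by the Corollary preceding Proposition $\ref{Noetherian}$) forces $R$ Noetherian — so the swindle is \emph{not} available in general and one is forced into the careful finite bootstrapping above. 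Thus the real content of the proof is verifying that ``direct summand of Gorenstein weak injective, together with an injective-quotient presentation whose kernel is again such a summand'' propagates, and this is exactly what Corollary $\ref{coro2.9}$ and the injectively resolving property (Proposition $\ref{injectively coresolving}$) are designed to give.
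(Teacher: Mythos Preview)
Your proposal has a real gap, and you have overlooked the much simpler route the paper actually takes.

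The paper's proof is one line: by Remark~\ref{1-4}(3) the class $\mathcal{GWI}$ is closed under direct \emph{products}, by Proposition~\ref{injectively coresolving} it is injectively resolving, and Holm's general result \cite[Prop.~1.4]{Ho} says that any injectively resolving class closed under countable direct products is automatically closed under direct summands. You considered an Eilenberg-swindle argument and correctly observed that closure under countable direct \emph{sums} would force $R$ to be left Noetherian --- but you missed that the dual hypothesis, closure under products, is freely available here and is exactly what Holm's proposition uses on the injective side. So the swindle (in its product form) \emph{is} available, contrary to what your last paragraph asserts.

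Your alternative bootstrapping is not complete as written. The crux is your claim that after forming $0\to L'\to I_0'\to M\to 0$ the kernel $L'$ is again a direct summand of some Gorenstein weak injective module; you offer only ``namely of $L\oplus(\Omega\overline{M})$ or similar'' without identifying such a module or verifying the claim, and your closing sentence that this is ``exactly what Corollary~\ref{coro2.9} and Proposition~\ref{injectively coresolving} are designed to give'' is a restatement of the goal, not an argument. (The step \emph{can} be salvaged: if $K,K'\subseteq I_0$ denote the kernels of the two projections $I_0\twoheadrightarrow M$ and $I_0\twoheadrightarrow M'$, then $K\cap K'=L$ and $K+K'=I_0$, giving an exact sequence $0\to L\to K\oplus K'\to I_0\to 0$; extension-closure then shows $K\oplus K'\in\mathcal{GWI}$, so $K$ is a summand of a Gorenstein weak injective module and the iteration can proceed. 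But none of this is in your proposal.)
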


\begin{proof}
It follows from Remark $\ref{1-4}$(3), Proposition $\ref{injectively coresolving}$ and \cite[Prop. 1.4]{Ho}.
\end{proof}

\begin{definition}\label{Gorenstein weak dimension}
The \emph{Gorenstein weak injective dimension} of an $R$-module $M$, denoted by $Gwid_R(M)$, is defined as $\operatorname{inf}\{n\mid \mbox{there is an exact sequence }0\rightarrow M\rightarrow G^0\rightarrow G^1\rightarrow \cdots\rightarrow G^n\rightarrow 0 \mbox{ with }  G^i \mbox{ Gorenstein weak injective for any } 0\leq i\leq n\}.$ If no such $n$ exists, set $Gwid_R(M)=\infty$.
\end{definition}

\begin{lemma}\label{gather}
Let $M$ be an $R$-module. Consider the following exact sequences:
\begin{gather*}
  0\rightarrow M\rightarrow G^0 \rightarrow G^1 \rightarrow\cdots \rightarrow G^{n-1}\rightarrow V^n\rightarrow 0\\
  0\rightarrow M\rightarrow \overline{G}^0 \rightarrow \overline{G}^1 \rightarrow\cdots \rightarrow  \overline{G}^{n-1}\rightarrow \overline{V}^n\rightarrow 0
\end{gather*}
where all $G^i$ and $\overline{G}^i$ are Gorenstein weak injective. Then $V^n$ is Gorenstein weak injective if and only if  $\overline{V}^n$ is Gorenstein weak injective.
\end{lemma}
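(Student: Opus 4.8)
The plan is to argue by induction on $n$. The base case $n=1$ will come from a single pushout, and the inductive step will reduce the comparison of two Gorenstein weak injective coresolutions of $M$ to the comparison of two shorter such coresolutions of an auxiliary module. Throughout, the two features of $\mathcal{GWI}$ that do the work are that it is closed under extensions (Proposition \ref{extension closed}) and that it is injectively resolving (Proposition \ref{injectively coresolving}).

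For $n=1$, I would take the pushout $P$ of the monomorphisms $M\hookrightarrow G^0$ and $M\hookrightarrow\overline{G}^0$. Since a pushout of a monomorphism is again a monomorphism, this produces exact sequences $0\to\overline{G}^0\to P\to V^1\to 0$ and $0\to G^0\to P\to\overline{V}^1\to 0$. If $V^1$ is Gorenstein weak injective, then $P$ is Gorenstein weak injective by the first sequence and Proposition \ref{extension closed}, and then $\overline{V}^1$ is Gorenstein weak injective by the second sequence and the injectively resolving property; the reverse implication is symmetric.

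For the inductive step assume $n\geq 2$ and that the lemma holds for $n-1$. Put $C=\operatorname{Coker}(M\to G^0)$ and $\overline{C}=\operatorname{Coker}(M\to\overline{G}^0)$; removing the first term from each of the two given sequences yields length-$(n-1)$ Gorenstein weak injective coresolutions of $C$ and of $\overline{C}$ whose terminal cosyzygies are $V^n$ and $\overline{V}^n$ respectively. Form the pushout $P$ of $M\hookrightarrow G^0$ and $M\hookrightarrow\overline{G}^0$ as before, so that $0\to\overline{G}^0\to P\to C\to 0$ and $0\to G^0\to P\to\overline{C}\to 0$ are exact. Now fix an injective coresolution $0\to P\to J^0\to J^1\to\cdots$ of $P$, with $k$-th cosyzygy $Z^k$ (so $Z^1=\operatorname{Coker}(P\to J^0)=J^0/P$). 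Viewing $\overline{G}^0\subseteq P\subseteq J^0$, the module $K:=J^0/\overline{G}^0$ fits in $0\to\overline{G}^0\to J^0\to K\to 0$ with $\overline{G}^0$ Gorenstein weak injective and $J^0$ injective, hence $K$ is Gorenstein weak injective by Proposition \ref{injectively coresolving}; moreover $K$ contains $P/\overline{G}^0=C$ with quotient $J^0/P=Z^1$, so $0\to C\to K\to Z^1\to 0$ is exact. Splicing this sequence onto the coresolution $0\to Z^1\to J^1\to J^2\to\cdots$ of $Z^1$ and truncating, I obtain a length-$(n-1)$ Gorenstein weak injective coresolution $0\to C\to K\to J^1\to\cdots\to J^{n-2}\to Z^{n-1}\to 0$ of $C$ with terminal cosyzygy $Z^{n-1}$. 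Running the same construction with the roles of $G^0$ and $\overline{G}^0$ interchanged, $\overline{K}:=J^0/G^0$ is Gorenstein weak injective and gives a length-$(n-1)$ Gorenstein weak injective coresolution of $\overline{C}$ with the same terminal cosyzygy $Z^{n-1}$. Applying the induction hypothesis to $C$ (resp. to $\overline{C}$) now yields $V^n\in\mathcal{GWI}\Leftrightarrow Z^{n-1}\in\mathcal{GWI}$ (resp. $\overline{V}^n\in\mathcal{GWI}\Leftrightarrow Z^{n-1}\in\mathcal{GWI}$), and combining these completes the induction.

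The crux, and the step I expect to demand the most care, is the inductive step: the idea is to pass to the pushout $P$, resolve it once and for all by injectives, and then observe that ``dividing $J^0$ by $\overline{G}^0$'' and ``dividing $J^0$ by $G^0$'' simultaneously produce Gorenstein weak injective coresolutions of $C$ and of $\overline{C}$ that terminate in one and the same module $Z^{n-1}$; this common terminal cosyzygy is precisely what lets the induction hypothesis link $V^n$ to $\overline{V}^n$. The point where the injectively resolving property is indispensable is the assertion that $J^0/\overline{G}^0$ and $J^0/G^0$ are Gorenstein weak injective, so that the fabricated resolutions are genuine $\mathcal{GWI}$-coresolutions; and one must track lengths when splicing, so that each fabricated coresolution has exactly $n-1$ Gorenstein weak injective terms and the induction hypothesis applies.
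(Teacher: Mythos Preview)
Your argument is correct. The base case via the pushout and the inductive step via the common injective coresolution of $P$ are both sound; the only properties of $\mathcal{GWI}$ you invoke are closure under extensions and the injectively resolving property, and both are available from Propositions~\ref{extension closed} and~\ref{injectively coresolving}. The length count in the inductive step checks out: the fabricated coresolution $0\to C\to K\to J^1\to\cdots\to J^{n-2}\to Z^{n-1}\to 0$ has exactly $n-1$ terms in $\mathcal{GWI}$, so the induction hypothesis applies.

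The paper takes a different route: it simply invokes the dual of \cite[Lem.~3.12]{AB}, the Auslander--Bridger ``generalized Schanuel'' lemma, which asserts precisely this cosyzygy independence for any injectively resolving class closed under summands. What you have written is, in effect, a self-contained proof of that classical lemma in the present setting: the pushout-plus-common-injective-resolution trick is exactly how such results are established. The trade-off is that the paper's proof is one line but requires the reader to locate and dualize an external reference, whereas yours is longer but keeps the argument internal to the paper and makes transparent exactly which closure properties of $\mathcal{GWI}$ are being used.
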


\begin{proof}
It follows from the dual version of \cite[Lem. 3.12]{AB}.
\end{proof}

The following proposition shows the existence of Gorenstein weak injective preenvelope of modules.

\begin{proposition}\label{Gorenstein weak injective preenvelope}
Let $M$ be an $R$-module with finite Gorenstein weak injective dimension $n$. Then $M$ admits an injective Gorenstein weak injective preenvelope $\phi: M\hookrightarrow G$, where $V=\operatorname{Coker}\phi$ satisfies $id_R(V)=n-1$ (if $n=0$, this should be interpreted as $V=0$). Moreover, if $wid_R(M)<\infty$, then $G$ is injective.
\end{proposition}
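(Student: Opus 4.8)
The plan is to induct on $n=Gwid_R(M)$, proving the unpacked statement that there is a short exact sequence $0\to M\xrightarrow{\phi}G\to V\to 0$ with $G$ Gorenstein weak injective and $id_R(V)=n-1$ (read as $V=0$ when $n=0$); the preenvelope property and the last assertion are then read off at the end. Intuitively, starting from a partial injective coresolution $0\to M\to I^0\to\cdots\to I^{n-1}\to C\to 0$ whose last cosyzygy $C$ is Gorenstein weak injective, I push this Gorenstein-weak-injective ``bubble'' leftwards past the injective terms one step at a time until it sits immediately after $M$, each step being one round of the induction. For $n=0$ the module $M$ is itself Gorenstein weak injective, and one takes $G=M$, $\phi=\mathrm{id}_M$, $V=0$; here the preenvelope property is trivial.

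For the inductive step $n\ge 1$ I would fix an embedding $M\hookrightarrow I$ into an injective module, set $N=I/M$ so that $0\to M\to I\xrightarrow{p}N\to 0$ is exact, and first check that $Gwid_R(N)=n-1$. Extending to a partial injective coresolution $0\to M\to I\to I^1\to\cdots\to I^{n-1}\to C\to 0$ of length $n$, Lemma \ref{gather} (compared against a length-$n$ Gorenstein weak injective coresolution of $M$, which exists by Definition \ref{Gorenstein weak dimension}) shows $C$ is Gorenstein weak injective, so $0\to N\to I^1\to\cdots\to I^{n-1}\to C\to 0$ witnesses $Gwid_R(N)\le n-1$; the reverse inequality holds because a shorter Gorenstein weak injective coresolution of $N$ could be prolonged by $0\to M\to I\to\cdots$ to contradict $Gwid_R(M)=n$. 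By the induction hypothesis there is then an exact sequence $0\to N\xrightarrow{\phi_N}G_N\to V_N\to 0$ with $G_N$ Gorenstein weak injective and $id_R(V_N)=n-2$ ($V_N=0$ when $n=1$).

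The heart of the argument is to glue these two sequences. Since $G_N$ is Gorenstein weak injective, Corollary \ref{coro2.9} provides an exact sequence $0\to K\to E_0\xrightarrow{\rho}G_N\to 0$ with $E_0$ injective and $K$ Gorenstein weak injective, and I would form the pullback $G=I\times_{G_N}E_0$ of $\phi_N p\colon I\to G_N$ and $\rho\colon E_0\to G_N$. Because $\rho$ is epic, the first projection $G\to I$ is epic with kernel isomorphic to $K$, so $0\to K\to G\to I\to 0$ is exact and $G$ is Gorenstein weak injective by Proposition \ref{extension closed}, being an extension of the injective (hence Gorenstein weak injective) module $I$ by $K$. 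The second projection $G\to E_0$ has kernel isomorphic to $\ker(\phi_N p)=\ker p=M$ (as $\phi_N$ is monic) and image $V:=\rho^{-1}(N)$, yielding $0\to M\to G\to V\to 0$; moreover $\rho$ restricts to a surjection $V\twoheadrightarrow N$, inducing $E_0/V\cong G_N/N=V_N$, i.e. $0\to V\to E_0\to V_N\to 0$ with $E_0$ injective, so an injective-dimension shift gives $id_R(V)\le n-1$. Equality holds: were $id_R(V)\le n-2$, splicing $0\to M\to G\to V\to 0$ with a length-$(n-2)$ injective coresolution of $V$ (injectives being Gorenstein weak injective) would give a length-$(n-1)$ Gorenstein weak injective coresolution of $M$, contradicting $Gwid_R(M)=n$.

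It remains to note that $\phi$ is a Gorenstein weak injective preenvelope and to deduce the final clause. For the first, given any Gorenstein weak injective $G'$, apply $\operatorname{Hom}_R(-,G')$ to $0\to M\to G\to V\to 0$; since $wid_R(V)\le id_R(V)<\infty$ we have $\operatorname{Ext}^1_R(V,G')=0$ by Proposition \ref{prop2.7}, so $\operatorname{Hom}_R(G,G')\to\operatorname{Hom}_R(M,G')$ is onto, i.e. every map $M\to G'$ factors through $\phi$. For the last clause, if $wid_R(M)<\infty$ then, since also $wid_R(V)<\infty$, the defining long exact sequence gives $wid_R(G)\le\max\{wid_R(M),wid_R(V)\}<\infty$, and a Gorenstein weak injective module of finite weak injective dimension is injective by Proposition \ref{either or}. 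The step I expect to demand the most care is the pullback bookkeeping in the inductive step — verifying that the two short exact sequences extracted from $G$ are exactly as described even though $\phi_N p$ need not be epic — together with pinning down that $id_R(V)$ equals $n-1$ rather than merely being bounded by it.
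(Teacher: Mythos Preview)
Your argument is correct, but it follows a genuinely different route from the paper. The paper gives a one-shot construction: from a partial injective coresolution $0\to M\to I^0\to\cdots\to I^{n-1}\to G'\to 0$ with $G'$ Gorenstein weak injective, it uses the complete resolution of $G'$ to produce a parallel sequence $0\to\widehat{G}\to Q^0\to\cdots\to Q^{n-1}\to G'\to 0$ with $\widehat{G}$ Gorenstein weak injective and each $Q^i$ injective, lifts the identity on $G'$ to a chain map between the two truncated complexes, and takes the mapping cone to obtain $0\to M\to I^0\oplus\widehat{G}\to I^1\oplus Q^0\to\cdots\to Q^{n-1}\to 0$; thus $G=I^0\oplus\widehat{G}$ and $V$ has an explicit injective resolution of length $n-1$. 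By contrast you induct on $n$ and at each step perform a pullback to drag the Gorenstein-weak-injective term one slot to the left. The paper's approach, modelled on Holm's argument for Gorenstein projective precovers, is global and yields $G$ explicitly as a direct sum; your approach is more elementary in that it avoids chain maps and mapping cones and relies only on a single pullback square plus Proposition~\ref{extension closed}, at the cost of an induction and the bookkeeping you flagged. The verification of the preenvelope property and the final clause are essentially the same in both proofs.
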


\begin{proof}
Assume that $Gwid_R(M)=n$. Choose a partial injective resolution of $M$:$$\xymatrix@C=0.5cm{
  0 \ar[r] & M \ar[r]^{} & I^0 \ar[r]^{} & I^1 \ar[r]^{} & \cdots \ar[r]^{} & I^{n-1} \ar[r]^{} & G' \ar[r] & 0 }.$$ By Lemma $\ref{gather}$, we have that $G'$ is Gorenstein weak injective, and hence, by the definition of Gorenstein weak injective $R$-modules, there is an exact sequence
  $$\emph{{{\bf G}}}: \ \xymatrix@C=0.5cm{
  0 \ar[r] & \widehat{G} \ar[r]^{} & Q^0 \ar[r]^{} & Q^1 \ar[r]^{} & \cdots \ar[r]^{} & Q^{n-1} \ar[r]^{} & G' \ar[r] & 0 },$$
  where $\widehat{G}$ is Gorenstein weak injective and each $Q^i$ is injective, such that $\mbox{Hom}_R(W,{{\bf G}})$ is exact for any weak injective $R$-module $W$. In particular, $\mbox{Hom}_R(E,{{\bf G}})$ is exact for any  injective $R$-module $E$, and hence, by \cite[Sec. 8.1]{EJ}, there exists morphisms $I^i\rightarrow Q^i$ and $M\rightarrow \widehat{G}$ such that the following diagram is commutative£º
$$\xymatrix@C=0.5cm{
  0 \ar[r] & M \ar[r]^{}\ar[d] & I^0 \ar[r]^{}\ar[d] & I^1 \ar[r]^{}\ar[d] & \cdots \ar[r]^{} & I^{n-1} \ar[r]^{}\ar[d] &G' \ar[r] \ar@{=}[d]& 0 \\
0 \ar[r] & \widehat{G} \ar[r]^{} & Q^0 \ar[r]^{} & Q^1 \ar[r]^{} & \cdots \ar[r]^{} & Q^{n-1} \ar[r]^{} & G' \ar[r] & 0 }$$
This diagram gives a chain map of complexes as follows:
$$\xymatrix@C=0.5cm{
  0 \ar[r] & M \ar[r]^{}\ar[d] & I^0 \ar[r]^{}\ar[d] & I^1 \ar[r]^{}\ar[d] & \cdots \ar[r]^{} & I^{n-1} \ar[r]^{}\ar[d] &  0 \\
0 \ar[r] & \widehat{G} \ar[r]^{} & Q^0 \ar[r]^{} & Q^1 \ar[r]^{} & \cdots \ar[r]^{} & Q^{n-1} \ar[r]^{} &0 }$$
which induces an isomorphism in homology. Its mapping cone
$$\xymatrix@C=0.5cm{
  0 \ar[r] & M \ar[r]^{} & I^0\oplus \widehat{G} \ar[r]^{} & I^1\oplus Q^0 \ar[r]^{} & \cdots \ar[r]^{} & I^{n-1}\oplus Q^{n-2} \ar[r]^{} & Q^{n-1} \ar[r] & 0 }$$
is exact. It is obvious that all $I^i\oplus Q^{i-1}$ and $Q^{n-1}$ are injective, and $I^0\oplus \widehat{G}$ is Gorenstein weak injective. Let $V=\mbox{Coker}(M\rightarrow I^0\oplus \widehat{G})$. Then $id_R(V)\leq n-1$ (In fact, $id_R(V)= n-1$. Otherwise, $Gwid_R(M)<n$, which is a contradiction).

Since $id_R(V)<\infty$, it is easy to verify that $\mbox{Ext}^1_R(V,\overline{G})=0$ for any Gorenstein weak injective $R$-module $\overline{G}$. Thus the sequence $\mbox{Hom}_R(I^0\oplus \widehat{G},\overline{G})\rightarrow \mbox{Hom}_R(M,\overline{G})\rightarrow 0$ is exact. Let $G=I^0\oplus \widehat{G}$. Then $M\rightarrow G$ is a Gorenstein weak injective preenvelope of $M$, as desired.

Since $wid_R(M)<\infty$, $wid_R(G)<\infty$. By Proposition $\ref{either or}$, we have that $G$ is injective.
\end{proof}

\begin{corollary}
Given an exact sequence $0\rightarrow L\rightarrow M\rightarrow N\rightarrow 0$. If $M$ and $N$ are Gorenstein weak injective, then the following are equivalent:

$(1)$ $L$ is Gorenstein weak injective;

$(2)$  $L$ is  Ding injective;

$(3)$ $L$ is Gorenstein  injective;

$(4)$ $\operatorname{Ext}^1_R(I,L)=0$ for any  injective $R$-module $I$;

$(5)$ $\operatorname{Ext}^1_R(E,L)=0$ for any  FP-injective $R$-module $E$;

$(6)$ $\operatorname{Ext}^1_R(W,L)=0$ for any weak injective $R$-module $W$.
\end{corollary}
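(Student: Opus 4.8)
The plan is to prove $(1)\Rightarrow(2)\Rightarrow(3)\Rightarrow(4)$, $(1)\Rightarrow(6)\Rightarrow(5)\Rightarrow(4)$, and $(4)\Rightarrow(1)$; together these yield the equivalence of all six conditions. The implications $(1)\Rightarrow(2)$ and $(2)\Rightarrow(3)$ are exactly the two displayed in Remark \ref{1-4}(2), and $(1)\Rightarrow(6)$ is Lemma \ref{lemma1}. The chain $(6)\Rightarrow(5)\Rightarrow(4)$ merely shrinks the class of test modules, using that every injective module is FP-injective and every FP-injective module is weak injective (recalled in Remark \ref{1-4}(2)). For $(3)\Rightarrow(4)$, if $L$ is Gorenstein injective then the right half $0\rightarrow L\rightarrow I^0\rightarrow I^1\rightarrow\cdots$ of its complete injective resolution stays exact after applying $\operatorname{Hom}_R(I,-)$ for every injective $I$, so $\operatorname{Ext}^i_R(I,L)=0$ for all $i\geq1$ by the same computation as in the proof of Lemma \ref{lemma1}.

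The substantive step is $(4)\Rightarrow(1)$. Suppose $\operatorname{Ext}^1_R(I,L)=0$ for every injective $R$-module $I$. Since $N$ is Gorenstein weak injective, Corollary \ref{coro2.9} supplies a short exact sequence $0\rightarrow G\rightarrow E\rightarrow N\rightarrow0$ with $E$ injective and $G$ Gorenstein weak injective. I would then form the pull-back of the surjection $E\rightarrow N$ along the surjection $M\rightarrow N$, producing a module $M'$ that sits in two short exact sequences
\[ 0\rightarrow G\rightarrow M'\rightarrow M\rightarrow 0 \qquad\mbox{and}\qquad 0\rightarrow L\rightarrow M'\rightarrow E\rightarrow 0, \]
the first having kernel $\ker(E\rightarrow N)=G$ and the second having kernel $\ker(M\rightarrow N)=L$. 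From the first sequence, since $G$ and $M$ are Gorenstein weak injective, Proposition \ref{extension closed} gives that $M'$ is Gorenstein weak injective. In the second sequence $E$ is injective, so its extension class lies in $\operatorname{Ext}^1_R(E,L)$, which is zero by hypothesis; hence that sequence splits and $M'\cong L\oplus E$. Therefore $L$ is a direct summand of the Gorenstein weak injective module $M'$, and Proposition \ref{direct summands} shows $L$ is Gorenstein weak injective, i.e. $(1)$ holds.

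I expect the only genuine work to be the set-up of $(4)\Rightarrow(1)$: one must check that the pull-back really produces the two displayed short exact sequences with the indicated kernels, and recognise that condition $(4)$ is precisely what forces the sequence $0\rightarrow L\rightarrow M'\rightarrow E\rightarrow0$ to split; the remaining steps are routine invocations of Remark \ref{1-4}(2), Lemma \ref{lemma1}, Corollary \ref{coro2.9}, Proposition \ref{extension closed}, and Proposition \ref{direct summands}. One small caveat: $(3)\Rightarrow(4)$ leans on a standard property of ordinary Gorenstein injective modules not proved in this note; for a self-contained treatment it can be replaced by the one-line complete-resolution argument used for Lemma \ref{lemma1}, now with ``injective'' in place of ``weak injective''.
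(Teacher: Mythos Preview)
Your proof is correct, and the logical skeleton matches the paper's (both reduce everything to the cycle through $(4)\Rightarrow(1)$), but the argument for the key step $(4)\Rightarrow(1)$ is genuinely different. The paper observes that the given short exact sequence $0\to L\to M\to N\to 0$ already exhibits $Gwid_R(L)\leq 1$, then invokes Proposition~\ref{Gorenstein weak injective preenvelope} to produce a short exact sequence $0\to L\to G\to I\to 0$ with $G$ Gorenstein weak injective and $I$ injective; condition $(4)$ makes this split, and Proposition~\ref{direct summands} finishes. Your route instead pulls back a sequence $0\to G\to E\to N\to 0$ from Corollary~\ref{coro2.9} along $M\to N$, uses Proposition~\ref{extension closed} to see the pullback $M'$ is Gorenstein weak injective, and then splits off $L$ from $M'$ via $(4)$. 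The paper's version is shorter once Proposition~\ref{Gorenstein weak injective preenvelope} is available, but that proposition is one of the more substantial results of the section; your pullback argument is more elementary in that it only needs extension closure and Corollary~\ref{coro2.9}. A minor further difference: the paper handles $(2)$ and $(3)$ by citing the equivalences $(2)\Leftrightarrow(5)$ and $(3)\Leftrightarrow(4)$ from the literature (dual versions of \cite[Cor.~2.11]{Ho} and \cite[Cor.~2.1]{MT}), whereas you run the implications $(1)\Rightarrow(2)\Rightarrow(3)\Rightarrow(4)$ directly, which keeps the argument self-contained.
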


\begin{proof} (1) $\Rightarrow$ (6) $\Rightarrow$ (5) $\Rightarrow$ (4) are trivial. (2) $\Leftrightarrow$ (5) and (3) $\Leftrightarrow$ (4) follow from the dual versions of \cite[Cor. 2.11]{Ho} and \cite[Cor. 2.1]{MT}.

(4) $\Rightarrow$ (1) By hypothesis, $Gwid_R(L)\leq 1$, and so there is an exact sequence $0\rightarrow L\rightarrow G\rightarrow I\rightarrow 0$ with $G$ Gorenstein weak injective and $I$ injective. By assumption, $\mbox{Ext}^1_R(I,L)=0$, and thus this sequence is split. Hence $L$ is Gorenstein weak injective by Proposition $\ref{direct summands}$.
\end{proof}

Now we give a functorial description of Gorenstein weak injective dimension of modules.

\begin{proposition}\label{Gweakdim} Let $M$ be an $R$-module with  finite Gorenstein weak injective dimension. Then the following are equivalent:

$(1)$ $Gwid_R(M)\leq n$;

$(2)$ $\operatorname{Ext}^i_R(W,M)=0$ for any weak injective $R$-module $W$ and any $i\geq n+1$;

$(3)$ $\operatorname{Ext}^i_R(\widetilde{W},M)=0$ for any  $R$-module $\widetilde{W}$ with finite weak injective dimension and any $i\geq n+1$;

$(4)$ For every exact sequence $\xymatrix@C=0.5cm{
  0 \ar[r] & M \ar[r]^{} & I^0 \ar[r]^{} & \cdots \ar[r]^{} & I^{n-1} \ar[r]^{} & C^n \ar[r] & 0
  }$, where each $I^i$ is  injective, $C^n$ is Gorenstein weak injective.

$(5)$ For every exact sequence $\xymatrix@C=0.5cm{
  0 \ar[r] & M \ar[r]^{} & G^0 \ar[r]^{} & \cdots \ar[r]^{} & G^{n-1} \ar[r]^{} & V^n \ar[r] & 0
  }$, where each $G^i$ is Gorenstein weak injective, $V^n$ is Gorenstein weak injective.

  Consequently, the Gorenstein weak injective dimension of $M$ is determined by the formulas:

$$
  \begin{array}{cc}
  Gwid_R(M)&\!\!\!\!\!\!\!\!\!\!\!\!\!\!\!\!\!\!\!\!=\operatorname{sup}\{n\mid \operatorname{Ext}^i_R(W,M)\neq 0 \mbox{ for some weak injective $R$-module }W\}\\
  &\!\!=\operatorname{sup}\{n\mid \operatorname{Ext}^i_R(\widetilde{W},M)\neq 0 \mbox{ for some  $R$-module $\widetilde{W}$ with  }wid_R(\widetilde{W})<\infty\}.
  \end{array}
$$
\end{proposition}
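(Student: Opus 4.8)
The plan is to prove the chain of equivalences (1) $\Rightarrow$ (4) $\Rightarrow$ (5) $\Rightarrow$ (3) $\Rightarrow$ (2) $\Rightarrow$ (1), which is the standard pattern for characterizations of a relative injective dimension. The key inputs from the excerpt are Lemma \ref{lemma1} (Gorenstein weak injectives have no higher $\operatorname{Ext}$ against weak injectives), Proposition \ref{prop2.7} (the characterization of $\mathcal{GWI}$ via an exact left $\mathcal{WI}$-resolution plus $\operatorname{Ext}$-vanishing), Proposition \ref{injectively coresolving} ($\mathcal{GWI}$ is injectively resolving), Corollary \ref{coro2.9}, Lemma \ref{gather} (well-definedness of the $n$-th cosyzygy up to being Gorenstein weak injective), and the hypothesis that $Gwid_R(M)<\infty$.

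First I would prove (1) $\Rightarrow$ (4): given $Gwid_R(M)\le n$, pick any partial injective resolution $0\to M\to I^0\to\cdots\to I^{n-1}\to C^n\to 0$; since $M$ also sits inside a Gorenstein weak injective coresolution of length $n$, Lemma \ref{gather} (comparing the cosyzygy $C^n$ of this injective — hence Gorenstein weak injective — coresolution against the length-$n$ Gorenstein weak injective coresolution) forces $C^n$ to be Gorenstein weak injective. Then (4) $\Rightarrow$ (5): given a Gorenstein weak injective coresolution $0\to M\to G^0\to\cdots\to G^{n-1}\to V^n\to 0$, splice in short exact sequences resolving each $G^i$ to the right by an injective (using Corollary \ref{coro2.9} written dually, i.e. $0\to G^i\to I^i\to G'\to 0$ with $I^i$ injective and $G'$ Gorenstein weak injective) and use the horseshoe-type / dimension-shifting bookkeeping together with the fact that $\mathcal{GWI}$ is injectively resolving to replace the $G^i$ by injectives at the cost of only changing $V^n$ within $\mathcal{GWI}$; alternatively, build a partial injective resolution of $M$ of length $n$ and apply (4), then compare its cosyzygy with $V^n$ via Lemma \ref{gather}. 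This second route is cleaner, so I would take it.

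Next, (5) $\Rightarrow$ (3): take an exact sequence $0\to M\to G^0\to\cdots\to G^{n-1}\to V^n\to 0$ with each $G^i$ injective (hence Gorenstein weak injective); by (5), $V^n\in\mathcal{GWI}$, so Lemma \ref{lemma1} gives $\operatorname{Ext}^j_R(\widetilde W,V^n)=0$ for all $j\ge 1$ and all $\widetilde W$ of finite weak injective dimension — here I would first verify the $\operatorname{Ext}$-vanishing against $\widetilde W$ with $wid_R(\widetilde W)<\infty$ by induction on $wid_R(\widetilde W)$, exactly as in the proof of (1) $\Rightarrow$ (3) in Proposition \ref{prop2.7}. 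Dimension shifting along the displayed resolution then yields $\operatorname{Ext}^i_R(\widetilde W,M)\cong\operatorname{Ext}^{i-n}_R(\widetilde W,V^n)=0$ for $i\ge n+1$. The implication (3) $\Rightarrow$ (2) is trivial since every weak injective module has weak injective dimension $0$. Finally (2) $\Rightarrow$ (1): using $Gwid_R(M)<\infty$, write $0\to M\to I^0\to\cdots\to I^{n-1}\to C^n\to 0$ with the $I^i$ injective; by (2) and dimension shifting $\operatorname{Ext}^i_R(W,C^n)=0$ for all weak injective $W$ and all $i\ge 1$, and $C^n$ has an exact left $\mathcal{WI}$-resolution because (by finiteness of $Gwid_R(M)$ and Lemma \ref{gather}) $C^n$ is a cosyzygy that is already known to be Gorenstein weak injective up to the ambiguity Lemma \ref{gather} controls — more carefully, I'd argue that since $Gwid_R M<\infty$ there is some $m$ with the $m$-th injective cosyzygy Gorenstein weak injective, and then run the $\operatorname{Ext}$-vanishing to push this down to $C^n$; Proposition \ref{prop2.7}(4) then gives $C^n\in\mathcal{GWI}$, so $Gwid_R(M)\le n$. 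The consequent formulas follow by taking the infimum of all such $n$ in (2) and (3).

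The main obstacle I anticipate is the bookkeeping in (2) $\Rightarrow$ (1): I need that $C^n$ has an \emph{exact} left $\mathcal{WI}$-resolution, and the only source of exact left $\mathcal{WI}$-resolutions in this paper is membership in $\mathcal{GWI}$ (via Lemma \ref{lemma1}) — so I must be careful not to argue circularly. The clean fix is to use the hypothesis $Gwid_R(M)<\infty$ together with Lemma \ref{gather} at the outset: finiteness gives an injective coresolution whose far-out cosyzygy lies in $\mathcal{GWI}$, Lemma \ref{gather} identifies that cosyzygy's class with that of $C^n$ once we show $C^n$ has the right $\operatorname{Ext}$-vanishing, and then (2) supplies precisely that vanishing. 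Everything else is routine dimension shifting and diagram chasing of the kind already carried out in Propositions \ref{prop2.7} and \ref{injectively coresolving}.
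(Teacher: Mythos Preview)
Your chain $(1)\Rightarrow(4)\Rightarrow(5)\Rightarrow(3)\Rightarrow(2)$ is fine, and using Lemma~\ref{gather} for $(1)\Rightarrow(4)$ and $(4)\Rightarrow(5)$ is a clean alternative to the paper's route (the paper instead goes $(1)\Rightarrow(2)\Rightarrow(3)\Rightarrow(5)\Rightarrow(4)\Rightarrow(1)$ and never invokes Lemma~\ref{gather} here). The real content in both arguments sits in the step that closes the loop, and that is where your proposal has a genuine gap.

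In your $(2)\Rightarrow(1)$ you correctly reduce to showing that the $n$th injective cosyzygy $C^n$ lies in $\mathcal{GWI}$, and you correctly observe that the finiteness hypothesis gives some $m\ge n$ with $C^m\in\mathcal{GWI}$. The problem is the descent from $C^m$ to $C^n$. Your proposed mechanism---``Lemma~\ref{gather} identifies that cosyzygy's class with that of $C^n$ once we show $C^n$ has the right $\operatorname{Ext}$-vanishing''---does not work: Lemma~\ref{gather} only compares cosyzygies at the \emph{same} degree, and its statement has no $\operatorname{Ext}$-vanishing hypothesis at all. Likewise, Proposition~\ref{prop2.7}(4) requires an exact left $\mathcal{WI}$-resolution of $C^n$, and nothing you have written produces one; knowing $C^m\in\mathcal{GWI}$ gives a left resolution of $C^m$, not of $C^n$, and injective resolving (Proposition~\ref{injectively coresolving}) goes the wrong way for the short exact sequence $0\to C^{m-1}\to I^{m-1}\to C^m\to 0$.

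The paper handles exactly this descent in its $(3)\Rightarrow(5)$, and the tool you are missing is Proposition~\ref{Gorenstein weak injective preenvelope}. One works inside a finite $\mathcal{GWI}$-coresolution $0\to V^n\to\overline{G}^0\to\cdots\to\overline{G}^m\to 0$ of the cosyzygy and descends one step at a time: since $Gwid_R(\overline{V}^{m-1})\le 1$, Proposition~\ref{Gorenstein weak injective preenvelope} supplies a short exact sequence $0\to\overline{V}^{m-1}\to G\to I\to 0$ with $G\in\mathcal{GWI}$ and $I$ injective; the $\operatorname{Ext}$-vanishing you already have forces $\operatorname{Ext}^1_R(I,\overline{V}^{m-1})=0$, so this sequence splits and $\overline{V}^{m-1}\in\mathcal{GWI}$ by Proposition~\ref{direct summands}. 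Iterating gives $V^n\in\mathcal{GWI}$. You can run precisely the same argument for your $C^n$ (which has $Gwid_R(C^n)<\infty$ and the required $\operatorname{Ext}$-vanishing), but you must invoke Proposition~\ref{Gorenstein weak injective preenvelope}; Lemma~\ref{gather} alone cannot close the loop.
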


\begin{proof} (1)$\Rightarrow$(2). Since $Gwid_R(M)\leq n$, we have an exact sequence
$$
\xymatrix@C=0.5cm{
  0 \ar[r] & M \ar[r]^{} & G^0 \ar[r]^{} & G^1 \ar[r]^{} & \cdots \ar[r]^{} & G^n \ar[r] & 0 },
  $$
  where each $G^i$ is Gorenstein weak injective. Let $V^1=\mbox{Coker}(M\rightarrow
  G^0)$, $V^i=\mbox{Coker}(G^{i-2}\rightarrow
  G^{i-1})$, $2\leq i\leq n$. Then for any weak injective $R$-module $W$, we have
  $$
  \mbox{Ext}_R^i(W,M)\cong\mbox{Ext}_R^{i-1}(W,V^1)\cong\cdots\cong\mbox{Ext}_R^{i-n}(W,G^n)=0, \ i\geq n+1.$$

(2)$\Rightarrow$(3) holds by dimension shifting.

  (3)$\Rightarrow$(5).  For every exact sequence $\xymatrix@C=0.4cm{
  0 \ar[r] & M \ar[r]^{} & G^0 \ar[r]^{} & \cdots \ar[r]^{} & G^{n-1} \ar[r]^{} & V^n \ar[r] & 0
  }$, where each $G^i$ is Gorenstein weak injective. Let $V^0=M$,, $V^1=\mbox{Coker}(M\rightarrow
  G^0)$ and $V^j=\mbox{Coker}(G^{j-2}\rightarrow G^{j-1})$, $2\leq j\leq n$. Then  the sequence $
    0 \rightarrow V^j \rightarrow E^j \rightarrow V^{j+1} \rightarrow 0 $ is exact for any $0\leq j\leq n-1$. Let $W$ be any weak injective $R$-module. Then we have the following exact sequence
    $$
    \xymatrix@C=0.5cm{
      \mbox{Ext}_R^i(W,G^j) \ar[r] & \mbox{Ext}_R^i(W,V^{j+1}) \ar[r]^{} & \mbox{Ext}_R^{i+1}(W,V^j) \ar[r] &
      \mbox{Ext}_R^{i+1}(W,G^j)
      },
      $$
      where $\mbox{Ext}_R^i(W,G^j)=0=\mbox{Ext}_R^{i+1}(W,G^j)$ by Lemma $\ref{lemma1}$. So we have
$$\mbox{Ext}_R^{i}(W,V^n)\cong\mbox{Ext}_R^{i+1}(W,V^{n-1})\cong\cdots\cong\mbox{Ext}_R^{i+n}(W,M)=0, \ i\geq 1.$$
Moreover, since $Gwid_R(M)<\infty$,  $Gwid_R(V^n)<\infty$, and hence we have the following exact sequence for some non-negative integer $m$
$$
\xymatrix@C=0.5cm{
  0 \ar[r] & V^n \ar[r]^{} & {\overline{G}}^0 \ar[r]^{} & {\overline{G}}^1 \ar[r]^{} & \cdots \ar[r]^{} & {\overline{G}}^m \ar[r] & 0 },
  $$
 where each ${\overline{G}}^i$ is Gorenstein weak injective. Let ${\overline{V}}^0=V^n$, ${\overline{V}}^1=\mbox{Coker}(V^n\rightarrow {\overline{G}}^0)$
and ${\overline{V}}^i=\mbox{Coker}({\overline{G}}^{i-2}\rightarrow {\overline{G}}^{i-1})$, $2\leq
i\leq m$. Then we have
$$\mbox{Ext}_R^1(W,{\overline{V}}^{m-1})\cong\mbox{Ext}_R^2(W,{\overline{V}}^{m-2})\cong\cdots\cong\mbox{Ext}_R^m(W,V^n)=0.$$
Since ${\overline{V}}^{m-1}=\mbox{Coker}({\overline{G}}^{m-3}\rightarrow {\overline{G}}^{m-2})$, we have an exact sequence $0\rightarrow {\overline{V}}^{m-1}\rightarrow {\overline{G}}^{m-1}\rightarrow {\overline{G}}^{m}\rightarrow 0$, that is, $Gwid_R({\overline{V}}^{m-1})\leq 1$. By Proposition $\ref{Gorenstein weak injective preenvelope}$, there exists an exact sequence $0\rightarrow {\overline{V}}^{m-1}\rightarrow G\rightarrow I\rightarrow 0$ such that $G$ is Gorenstein weak injective and $I$ is injective. In addition, this sequence is split since $\mbox{Ext}_R^1(I,{\overline{V}}^{m-1})=0$. Therefore, ${\overline{V}}^{m-1}$ is Gorenstein weak injective. By a similar argument, we have that ${\overline{V}}^{m-2}$, $\cdots$, ${\overline{V}}^{0}$ are Gorenstein weak injective. In particular, $V^n$ is Gorenstein weak injective.

(5) $\Rightarrow$(4) and (4) $\Rightarrow$(1) are trivial.
\end{proof}

\begin{proposition}
Given an exact sequence $
    0 \rightarrow L \rightarrow M \rightarrow N \rightarrow 0 $. If any two of $R$-modules $L$, $M$, or $N$ have finite Gorenstein weak injective dimension, then so has the third.
\end{proposition}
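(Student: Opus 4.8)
The plan is to split the statement into the three cases determined by which two of $L$, $M$, $N$ are assumed to have finite Gorenstein weak injective dimension, to settle first the ``middle'' case ($Gwid_R(L)<\infty$ and $Gwid_R(N)<\infty$ imply $Gwid_R(M)<\infty$), and then to reduce the ``quotient'' case ($Gwid_R(L),Gwid_R(M)<\infty$ imply $Gwid_R(N)<\infty$) and the ``submodule'' case ($Gwid_R(M),Gwid_R(N)<\infty$ imply $Gwid_R(L)<\infty$) back to it. The workhorse is Proposition $\ref{Gweakdim}$: for a module of \emph{finite} Gorenstein weak injective dimension the dimension is detected both by the vanishing of $\operatorname{Ext}^i_R(W,-)$ on weak injective modules $W$ and by the Gorenstein weak injectivity of the final cokernel in a partial injective coresolution of the appropriate length. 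What this criterion does not yield is finiteness itself, which is a standing hypothesis of Proposition $\ref{Gweakdim}$; so the substance of the proof lies in a handful of constructions that genuinely produce finite Gorenstein weak injective coresolutions, and a purely $\operatorname{Ext}$-theoretic argument would be circular.

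First I would record some routine auxiliary facts. (i) If $0\to X\to E^0\to\cdots\to E^{n-1}\to Y\to 0$ is exact with each $E^i$ injective and $Y$ Gorenstein weak injective, then $Gwid_R(X)\le n$; this is immediate from Definition $\ref{Gorenstein weak dimension}$ since injective modules are Gorenstein weak injective. (ii) If $0\to X\to E\to X'\to 0$ is exact with $E$ injective and $Gwid_R(X)=s<\infty$, then $Gwid_R(X')\le\max\{s-1,0\}$: extend to an injective coresolution of $X$ whose first term is $E$, so that $X'$ is the first cosyzygy; the $s$-th cokernel of $X$ is Gorenstein weak injective by Proposition $\ref{Gweakdim}$(4), and it is also the $(s-1)$-th cokernel of $X'$, so (i) applies (the case $s=0$ being Proposition $\ref{injectively coresolving}$). (iii) Dually, if $0\to X\to E\to Y\to 0$ is exact with $E$ injective and $Gwid_R(Y)<\infty$, then $Gwid_R(X)<\infty$: splice this onto a partial injective coresolution of $Y$ whose final cokernel is Gorenstein weak injective (Proposition $\ref{Gweakdim}$(4)) and apply (i). (iv) A direct summand of a module of finite Gorenstein weak injective dimension again has finite Gorenstein weak injective dimension, by the same cokernel argument together with Proposition $\ref{direct summands}$.

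For the middle case I would invoke the horseshoe lemma for injective coresolutions: choosing injective coresolutions of $L$ and $N$, their termwise direct sum is an injective coresolution of $M$, and the $k$-th cosyzygies sit in short exact sequences $0\to V^k_L\to V^k_M\to V^k_N\to 0$. Taking $k=\max\{Gwid_R(L),Gwid_R(N)\}$, the outer terms are Gorenstein weak injective by Proposition $\ref{Gweakdim}$(4), hence so is $V^k_M$ by Proposition $\ref{extension closed}$, and (i) gives $Gwid_R(M)\le k$. For the quotient case I would embed $L$ in an injective module $E$ and form the pushout $Q$ of $E\hookleftarrow L\hookrightarrow M$, producing short exact sequences $0\to M\to Q\to E/L\to 0$ and $0\to E\to Q\to N\to 0$; by (ii) and the middle case $Gwid_R(Q)<\infty$, while the second sequence splits because $E$ is injective, so $N$ is a direct summand of $Q$ and (iv) concludes. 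For the submodule case I would embed $M$ in an injective module $E$ and chase $0\to M\to E\to E/M\to 0$, $0\to N\to E/L\to E/M\to 0$ and $0\to L\to E\to E/L\to 0$: the first with (ii) gives $Gwid_R(E/M)<\infty$, the second with the middle case gives $Gwid_R(E/L)<\infty$, and the third with (iii) gives $Gwid_R(L)<\infty$.

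The step to watch is the bookkeeping that keeps the three cases in a fixed logical order, since Proposition $\ref{Gweakdim}$ presupposes finiteness and the three cases are genuinely asymmetric — the quotient case rests on the splitting of a sequence with injective kernel, the submodule case on the splicing fact (iii) — so a careless reduction of one case to another would become circular. Once the order ``auxiliary facts $\to$ middle case $\to$ quotient case $\to$ submodule case'' is fixed, each remaining step is a short diagram chase, and one obtains along the way the expected bounds $Gwid_R(M)\le\max\{Gwid_R(L),Gwid_R(N)\}$, $Gwid_R(N)\le\max\{Gwid_R(M),Gwid_R(L)-1\}$ and $Gwid_R(L)\le\max\{Gwid_R(M),Gwid_R(N)+1\}$. (Alternatively, the statement could be deduced from general properties of the injectively resolving class $\mathcal{GWI}$, but this direct route is more transparent.)
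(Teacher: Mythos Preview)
The paper states this proposition without proof; presumably the author regards it as a routine consequence of the fact that $\mathcal{GWI}$ is injectively resolving and closed under direct summands (Propositions \ref{injectively coresolving} and \ref{direct summands}), via the standard two-out-of-three argument for resolution dimensions (as in Auslander--Bridger \cite{AB} or the dual of \cite[Prop.~2.18]{Ho}). Your proof is correct and spells out exactly this argument by hand: the horseshoe step for the middle case, the pushout reduction for the quotient case, and the cosyzygy chase for the submodule case are all valid, and you have been appropriately careful not to invoke Proposition \ref{Gweakdim} before finiteness is established. The parenthetical remark at the end of your write-up is in fact the paper's implicit route, so you might promote it to the main argument and cite the general result directly; your explicit case analysis then becomes a self-contained alternative that also yields the sharper inequalities you record.
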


It is natural to investigate how much the usual injective dimension differs from the Gorenstein weak injective one. In what follows, $Gid_R(M)$ and $Did_R(M)$ will denote respectively the Gorenstein injective and Ding injective dimension of an $R$-module $M$ (see \cite[Def. 2.8]{Ho} and \cite[Def. 2.3]{Ya} for details).

\begin{proposition}
Let $M$ be an $R$-module. Then

$(1)$ $Gwid_R(M)\leq id_R(M)$ with equality, if $wid_R(M)<\infty$;

$(2)$ $Gid_R(M)\leq Did_R(M)\leq Gwid_R(M)$ with equalities, if $Gwid_R(M)<\infty$.
\end{proposition}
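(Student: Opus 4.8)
The plan is to read off the two displayed inequalities from the inclusions among the relevant classes of modules, and then to derive all of the equalities from a single reduction whose core is a splitting argument built on Proposition~$\ref{Gorenstein weak injective preenvelope}$.

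For $(1)$ the bound $Gwid_R(M)\le id_R(M)$ is immediate, since every injective module is Gorenstein weak injective (Remark~$\ref{1-4}$(1)), so a finite injective coresolution of $M$ is in particular a finite Gorenstein weak injective one. For the equality when $wid_R(M)<\infty$, note first that $Gwid_R(M)=\infty$ forces $id_R(M)=\infty$ by the inequality just proved; and if $Gwid_R(M)=n<\infty$, then Proposition~$\ref{Gorenstein weak injective preenvelope}$ yields an exact sequence $0\to M\to G\to V\to 0$ with $G$ Gorenstein weak injective and $id_R(V)=n-1$, and the hypothesis $wid_R(M)<\infty$ forces $G$ to be injective, so that $id_R(M)\le 1+id_R(V)=n$ (read $V=0$ and $M\cong G$ when $n=0$). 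Together with the first inequality this gives $Gwid_R(M)=id_R(M)$.

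For $(2)$ the chain $Gid_R(M)\le Did_R(M)\le Gwid_R(M)$ is formal: by Remark~$\ref{1-4}$(2) every Gorenstein weak injective module is Ding injective and every Ding injective module is Gorenstein injective, so a Gorenstein weak injective coresolution of $M$ of length $n$ is simultaneously a Ding injective and a Gorenstein injective coresolution of length $n$. Now suppose $Gwid_R(M)<\infty$; then all three dimensions are finite, so it suffices to prove $Gwid_R(M)\le Gid_R(M)$. The key point I would isolate is: \emph{a Gorenstein injective $R$-module $N$ with $Gwid_R(N)<\infty$ is Gorenstein weak injective}. Granting this, set $m=Gid_R(M)$ and choose an exact sequence $0\to M\to I^0\to\cdots\to I^{m-1}\to C^m\to 0$ with each $I^i$ injective. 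Then $C^m$ is Gorenstein injective, by the description of Gorenstein injective dimension through cosyzygies of injective coresolutions (the Gorenstein-injective analogue of Proposition~$\ref{Gweakdim}$; see \cite{Ho}), while $Gwid_R(C^m)<\infty$ by repeated use of the proposition immediately preceding the present one (if two terms of a short exact sequence have finite Gorenstein weak injective dimension, so does the third). Hence $C^m$ is Gorenstein weak injective by the key point, so $0\to M\to I^0\to\cdots\to I^{m-1}\to C^m\to 0$ is itself a Gorenstein weak injective coresolution of $M$ and $Gwid_R(M)\le m=Gid_R(M)$. Combined with $Gid_R(M)\le Did_R(M)\le Gwid_R(M)$ this yields $Gid_R(M)=Did_R(M)=Gwid_R(M)$.

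The remaining task, which I expect to be the main obstacle, is the key point. Let $N$ be Gorenstein injective with $Gwid_R(N)=k$; if $k=0$ there is nothing to prove, so suppose $k\ge 1$. Applying Proposition~$\ref{Gorenstein weak injective preenvelope}$ to $N$ produces an exact sequence $0\to N\to G\to V\to 0$ with $G$ Gorenstein weak injective and $id_R(V)=k-1<\infty$. Since $N$ is Gorenstein injective and $V$ has finite injective dimension, $\operatorname{Ext}^1_R(V,N)=0$ (a standard property of Gorenstein injective modules, obtained by dimension shifting on $id_R(V)$ from the complete injective coresolution defining $N$; see \cite{Ho}). Thus the sequence splits, $N$ is a direct summand of $G$, and since $\mathcal{GWI}$ is closed under direct summands (Proposition~$\ref{direct summands}$), $N$ is Gorenstein weak injective, i.e.\ $Gwid_R(N)=0$, contradicting $k\ge 1$. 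The point of difficulty is that a naive induction on $Gwid_R(N)$ (or on $Gid_R(N)$) is circular; what unlocks the argument is that Proposition~$\ref{Gorenstein weak injective preenvelope}$ provides not merely an embedding of $N$ into a Gorenstein weak injective module, but one whose cokernel has finite ordinary injective dimension — and Gorenstein injective modules are $\operatorname{Ext}^1$-orthogonal to modules of finite injective dimension, which forces the embedding to split.
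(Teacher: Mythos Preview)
Your proof is correct. For part~$(1)$ your argument is essentially the paper's, but streamlined: you invoke the ``Moreover'' clause of Proposition~$\ref{Gorenstein weak injective preenvelope}$ (that $G$ is injective when $wid_R(M)<\infty$) directly, whereas the paper re-derives the same conclusion via a pull-back diagram and a splitting of $0\to G'\to N\to M\to 0$.

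For part~$(2)$ the two arguments are genuinely different. The paper shows $Gid_R(M)\ge n$ directly: by Proposition~$\ref{Gweakdim}$ there is a weak injective $W$ with $\operatorname{Ext}^n_R(W,M)\neq 0$, and embedding $W$ in an injective $I$ (so that the cokernel is again weak injective and hence $\operatorname{Ext}^{n+1}_R$-orthogonal to $M$) forces $\operatorname{Ext}^n_R(I,M)\neq 0$, which is exactly the criterion from \cite[Thm.~2.22]{Ho}. Your route instead isolates the lemma ``Gorenstein injective with finite $Gwid$ implies Gorenstein weak injective'' and proves it by a splitting argument from Proposition~$\ref{Gorenstein weak injective preenvelope}$, then passes to the $m$th injective cosyzygy. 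The paper's approach is shorter and needs only the $\operatorname{Ext}$-description of both dimensions; your approach is slightly longer but yields as a by-product the statement that the paper records only afterwards as a corollary of the present proposition (namely, that a Gorenstein injective module is either Gorenstein weak injective or has $Gwid_R=\infty$), with an independent proof. A cosmetic remark: your contradiction framing in the key point is unnecessary---the splitting argument already shows $N\in\mathcal{GWI}$ outright whenever $Gwid_R(N)<\infty$.
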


\begin{proof}
(1) Clearly, $Gwid_R(M)\leq id_R(M)$. Let $wid_R(M)<\infty$. It suffices to prove $id_R(M)\leq Gwid_R(M)$. Without loss of generality, we assume that $Gwid_R(M)=n<\infty$ for some non-negative integer $n$. If $n=0$, that is, $M$ is Gorenstein weak injective, then there is an exact sequence $
    0 \rightarrow L \rightarrow I \rightarrow M \rightarrow 0 $ with $I$  injective and $L$ Gorenstein weak injective. Note that $\mbox{Ext}^1_R(M,L)=0$ since $wid_R(M)<\infty$. Thus this sequence is split, and so $M$ is injective. Now let $n\geq 1$. By Proposition $\ref{Gorenstein weak injective preenvelope}$, there is an exact sequence $
    0 \rightarrow M \rightarrow G \rightarrow V \rightarrow 0 $ with $G$ Gorenstein weak injective and $id_R(V)=n-1$. Since $G$ is Gorenstein weak injective, there is an exact sequence $
    0 \rightarrow G' \rightarrow I \rightarrow G \rightarrow 0 $ with $I$  injective and $G'$ Gorenstein weak injective. Consider the following pull-back diagram:
    \begin{gather*}
\xymatrix{
&0\ar[d]&0\ar[d]&&\\
&G'\ar@{=}[r]\ar[d]&G'\ar[d]&&\\
0\ar[r]&N\ar[r]\ar[d]&I\ar[r]\ar[d]&V\ar@{=}[d]\ar[r]&0\\
0\ar[r]&M\ar[r]\ar[d]&G\ar[r]\ar[d]&V\ar[r]&0\\
&0&0&&
}
\end{gather*}
Since $I$ is injective and $id_R(V)=n-1$, it follows from the middle row in the above diagram that $id_R(N)\leq n$. Moreover, $\mbox{Ext}^1_R(M,G')=0$ since $wid_R(M)<\infty$, which shows that the second column in the above diagram is split. Thus $id_R(M)\leq id_R(N)\leq n=Gwid_R(M)$, as desired.

(2) Since every Gorenstein weak injective $R$-module is Ding injective, and every Ding injective $R$-module is Gorenstein injective, it is obvious that $Gid_R(M)\leq Did_R(M)\leq Gwid_R(M)$ for any $R$-module $M$. Now let $Gwid_R(M)=n<\infty$. In order to show $Gid_R(M)\geq n$, it suffices to find an injective $R$-module $I$ such that $\mbox{Ext}^n_R(I,M)\neq 0$ by \cite[Thm. 2.22]{Ho}. Since $Gwid_R(M)=n$, there is some weak injective $R$-module $W$ such that $\mbox{Ext}^n_R(W,M)\neq 0$. Consider an exact sequence $
    0 \rightarrow W \rightarrow I \rightarrow W' \rightarrow 0 $ with $I$ injective. It is easy to verify that $W'$ is weak injective. Consider the following exact sequence
    $$
    \cdots\rightarrow \mbox{Ext}^n_R(I,M)\rightarrow \mbox{Ext}^n_R(W,M)\rightarrow \mbox{Ext}^{n+1}_R(W',M)=0.
    $$
 It follows then that   $\mbox{Ext}^n_R(I,M)\neq 0$, as desired.
\end{proof}

\begin{corollary}
If an $R$-module $M$ is Gorenstein injective or Ding injective, then  either $M$ is Gorenstein weak injective or $Gwid_R(M)=\infty$.
\end{corollary}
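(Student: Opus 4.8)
The plan is to read this off directly from part (2) of the preceding proposition, using its contrapositive. First I would dispose of the trivial case $M=0$, which is injective and hence Gorenstein weak injective, so that case is covered. Now assume $M\neq 0$ and, aiming at the dichotomy, suppose that $Gwid_R(M)$ is finite, say $Gwid_R(M)=n<\infty$; it then suffices to show that $n=0$, i.e. that $M$ is Gorenstein weak injective.

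Next I would invoke the equalities in part (2) of the previous proposition: since $Gwid_R(M)<\infty$, we have $Gid_R(M)=Did_R(M)=Gwid_R(M)=n$. If $M$ is Gorenstein injective, then by definition $Gid_R(M)=0$, forcing $n=0$; if instead $M$ is Ding injective, then $Did_R(M)=0$, and again $n=0$. In either case $Gwid_R(M)=0$, so $M$ is Gorenstein weak injective. Taking the contrapositive, if $M$ is Gorenstein injective or Ding injective and $M$ is not Gorenstein weak injective, then necessarily $Gwid_R(M)=\infty$, which is exactly the asserted dichotomy.

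There is essentially no real obstacle here; the statement is a bookkeeping consequence of the previous proposition. The only points worth being careful about are (i) the convention that a nonzero module has Gorenstein injective (resp. Ding injective) dimension zero precisely when it is itself Gorenstein injective (resp. Ding injective), and (ii) the observation that the equalities in part (2) of the previous proposition require only the hypothesis $Gwid_R(M)<\infty$, which is exactly what is assumed in the contrapositive argument. One may also phrase the argument without contraposition: if $Gwid_R(M)<\infty$ then the chain of equalities gives $Gwid_R(M)=Gid_R(M)$ (or $=Did_R(M)$), which vanishes under the hypothesis, so $M$ is Gorenstein weak injective; and otherwise $Gwid_R(M)=\infty$.
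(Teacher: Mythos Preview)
Your argument is correct and is exactly the intended one: the paper states this corollary immediately after the proposition with inequalities $Gid_R(M)\leq Did_R(M)\leq Gwid_R(M)$ (with equalities when $Gwid_R(M)<\infty$) and gives no proof, so it is meant to be read off precisely as you do. The separate treatment of $M=0$ is harmless but unnecessary, since the zero module has all three dimensions equal to zero under the definitions used.
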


Accordingly, we define the left global Gorenstein weak injective dimension, $\ell. GwiD(R)$, of a ring $R$ as follows:
$$\ell. GwiD(R)=\mbox{sup}\{Gwid_R(M)\mid M \mbox{ is any $R$-module}\}.$$

\begin{corollary}
If $\ell. GwiD(R)<\infty$, then the following are equivalent:

$(1)$ $\ell. GwiD(R)\leq n$;

$(2)$ $pd_R(\widetilde{W})\leq n$ for any $R$-module $\widetilde{W}$ with finite weak injective dimension;

$(3)$ $pd_R({W})\leq n$ for any weak injective $R$-module $W$.
\end{corollary}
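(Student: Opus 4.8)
The plan is to deduce all three equivalences from the functorial characterization of Gorenstein weak injective dimension in Proposition \ref{Gweakdim}. Note first that the standing hypothesis $\ell. GwiD(R)<\infty$ guarantees $Gwid_R(M)<\infty$ for every $R$-module $M$, so Proposition \ref{Gweakdim} is applicable to every $M$; this is the only place the finiteness assumption is used. I would prove the cycle $(1)\Rightarrow(2)\Rightarrow(3)\Rightarrow(1)$.

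For $(1)\Rightarrow(2)$: let $\widetilde{W}$ be an $R$-module with $wid_R(\widetilde{W})<\infty$; to show $pd_R(\widetilde{W})\le n$ it suffices to show $\operatorname{Ext}^i_R(\widetilde{W},M)=0$ for every $R$-module $M$ and every $i\ge n+1$. Since $\ell. GwiD(R)\le n$ we have $Gwid_R(M)\le n$, so by the equivalence $(1)\Leftrightarrow(3)$ of Proposition \ref{Gweakdim} the group $\operatorname{Ext}^i_R(\widetilde{W},M)$ vanishes for all $i\ge n+1$, which is what we want. The implication $(2)\Rightarrow(3)$ is immediate, since every weak injective $R$-module has weak injective dimension $0<\infty$. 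For $(3)\Rightarrow(1)$: fix an $R$-module $M$; by $(3)$ we have $\operatorname{Ext}^i_R(W,M)=0$ for every weak injective $W$ and every $i\ge n+1$, so the equivalence $(1)\Leftrightarrow(2)$ of Proposition \ref{Gweakdim} yields $Gwid_R(M)\le n$. As $M$ was arbitrary, $\ell. GwiD(R)\le n$.

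Since the whole argument is just reindexing the quantifiers in Proposition \ref{Gweakdim} against the definitions of $pd_R$ and $\ell. GwiD(R)$, there is no serious obstacle; the one point to keep in mind is that Proposition \ref{Gweakdim} requires $Gwid_R(M)<\infty$, which is precisely what the hypothesis $\ell. GwiD(R)<\infty$ provides. As an alternative, one may connect $(2)$ and $(3)$ directly without passing through $(1)$: $(3)\Rightarrow(2)$ follows by induction on $m=wid_R(\widetilde{W})$, choosing a monic weak injective preenvelope $0\to\widetilde{W}\to W\to V\to 0$ with $wid_R(V)=m-1$ (using that syzygies of super finitely presented modules are again super finitely presented, so that the class of weak injectives is closed under the relevant cokernels) and feeding $pd_R(W)\le n$, $pd_R(V)\le n$ into the long exact sequence of $\operatorname{Ext}^{*}_R(-,-)$ to get $pd_R(\widetilde{W})\le n$; but the route through Proposition \ref{Gweakdim} is shorter.
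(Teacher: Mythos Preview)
Your proof is correct and follows essentially the same approach as the paper: both establish the cycle $(1)\Rightarrow(2)\Rightarrow(3)\Rightarrow(1)$ by translating between the Ext-vanishing characterization of $Gwid_R$ in Proposition~\ref{Gweakdim} and the standard Ext-vanishing characterization of $pd_R$. You are in fact a bit more explicit than the paper about why the standing hypothesis $\ell.GwiD(R)<\infty$ is needed (namely, to put every module in range of Proposition~\ref{Gweakdim}).
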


\begin{proof}
(1) $\Rightarrow$ (2). Let $\widetilde{W}$ be an $R$-module  with finite weak injective dimension. For any $R$-module $M$, $Gwid_R(M)\leq n$ by hypothesis, and hence  we have $\mbox{Ext}^i_R(\widetilde{W},M)=0$ for any $i\geq n+1$. So $pd_R(\widetilde{W})\leq n$.

(2) $\Rightarrow$ (3) is trivial.

(3) $\Rightarrow$ (1). Let $M$ be any $R$-module. It follows from Proposition $\ref{Gweakdim}$ that $Gwid_R(M)\leq n$. Thus  $\ell. GwiD(R)\leq n$.
\end{proof}

It is well-known that pure injective  modules play an important role in homological algebra, and  the relative version also have been investigated by many authors (e.g. \cite{BGMS,Fa,Hu1,Si,Wa,Wi}). Inspired by this, we give the following definition.

\begin{definition}
An exact sequence $0\rightarrow L\rightarrow M\rightarrow N\rightarrow 0$  is called \emph{$\mathcal{GWI}$-copure exact} if for any $X\in \mathcal{GWI}$, the induced sequence $0\rightarrow \mbox{Hom}_R(N,X)\rightarrow \mbox{Hom}_R(M,X)\rightarrow \mbox{Hom}_R(L,X)\rightarrow 0$ is exact.

An $R$-module $M$ is called \emph{$\mathcal{GWI}$-copure projective} (resp. \emph{$\mathcal{GWI}$-copure injective}) if $\mbox{Hom}_R(M,-)$ (resp. $\mbox{Hom}_R(-,M)$) leaves any $\mathcal{GWI}$-copure  exact sequence exact.
\end{definition}

If $\xymatrix@C=0.5cm{
  0\ar[r]& L\ar[r]^{f} & M \ar[r]^{g} & N \ar[r]^{} & 0  }$   is a $\mathcal{GWI}$-copure exact sequence, then $f$ is called a \emph{$\mathcal{GWI}$-copure injection}, and $g$ is called a \emph{$\mathcal{GWI}$-copure surjection}.

The following proposition shows the necessity of studying $\mathcal{GWI}$-copure exact sequences.

\begin{proposition}\label{prop5.2}
 Let $M$ be an $R$-module with finite Gorenstein weak injective dimension. Then the following are equivalent:

$(1)$ $M$ is  Gorenstein weak injective;

$(2)$ For any $\mathcal{GWI}$-copure injection $i:X\rightarrow Y$  and any $h:X\rightarrow M$, there exists $g:Y\rightarrow M$ such that the following diagram commute:
$$\xymatrix{
 X\ar[d]_{h} \ar[r]^{i} &  Y  \ar@{.>}[dl]^{g}    \\
  M                      }
$$

$(3)$ The functor $\operatorname{Hom}_R(-,M)$ is exact with respect to any $\mathcal{GWI}$-copure exact sequence;

$(4)$ Every $\mathcal{GWI}$-copure exact sequence $\xymatrix@C=0.5cm{
 0\ar[r]& M \ar[r]^{f} & N \ar[r]^{g} & L \ar[r]^{} & 0  }$ is split.
\end{proposition}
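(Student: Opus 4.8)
The plan is to run the cycle of implications $(1)\Rightarrow(2)\Rightarrow(3)\Rightarrow(4)\Rightarrow(1)$, noting at the outset that the first three implications are purely formal and do not use the standing hypothesis $Gwid_R(M)<\infty$; that hypothesis enters only in $(4)\Rightarrow(1)$.

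For $(1)\Rightarrow(2)$ one unwinds the definitions. A $\mathcal{GWI}$-copure injection $i\colon X\to Y$ sits inside a $\mathcal{GWI}$-copure exact sequence $0\to X\to Y\to Z\to 0$, so applying $\operatorname{Hom}_R(-,M)$ with $M\in\mathcal{GWI}$ shows that $i^{*}\colon\operatorname{Hom}_R(Y,M)\to\operatorname{Hom}_R(X,M)$ is surjective; hence any $h\colon X\to M$ factors as $h=gi$ for some $g\colon Y\to M$. For $(2)\Rightarrow(3)$, let $0\to L\to N\to K\to 0$ be $\mathcal{GWI}$-copure exact; left exactness of $\operatorname{Hom}_R(-,M)$ is automatic, and the surjectivity of $\operatorname{Hom}_R(N,M)\to\operatorname{Hom}_R(L,M)$ is exactly the lifting property of $(2)$ applied to the $\mathcal{GWI}$-copure injection $L\to N$. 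For $(3)\Rightarrow(4)$, apply $(3)$ to the given sequence $\xymatrix@C=0.5cm{0\ar[r]& M \ar[r]^{f} & N \ar[r]^{g} & L \ar[r] & 0}$; then $\operatorname{Hom}_R(N,M)\to\operatorname{Hom}_R(M,M)$ is onto, so $\operatorname{id}_M$ lifts to a map $r\colon N\to M$ with $rf=\operatorname{id}_M$, splitting the sequence.

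The substantive step is $(4)\Rightarrow(1)$. Set $n=Gwid_R(M)$; if $n=0$ there is nothing to prove, so assume $n\geq 1$. By Proposition \ref{Gorenstein weak injective preenvelope} there is an exact sequence $0\to M\to G\to V\to 0$ with $G$ Gorenstein weak injective and $id_R(V)=n-1<\infty$. The key claim is that this sequence is $\mathcal{GWI}$-copure exact: for any $X\in\mathcal{GWI}$ we need $\operatorname{Hom}_R(G,X)\to\operatorname{Hom}_R(M,X)$ surjective, which follows from $\operatorname{Ext}^1_R(V,X)=0$, and this vanishing holds because $id_R(V)<\infty$ (the same fact used inside the proof of Proposition \ref{Gorenstein weak injective preenvelope}, obtained by dimension shifting along a finite injective resolution of $V$ together with Lemma \ref{lemma1}). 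Granting the claim, $(4)$ forces $0\to M\to G\to V\to 0$ to split, so $M$ is a direct summand of the Gorenstein weak injective module $G$, whence $M\in\mathcal{GWI}$ by Proposition \ref{direct summands}.

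I expect the only point needing genuine care is the claim in $(4)\Rightarrow(1)$ that $0\to M\to G\to V\to 0$ is $\mathcal{GWI}$-copure exact, i.e.\ the vanishing $\operatorname{Ext}^1_R(V,X)=0$ for $V$ of finite injective dimension and $X\in\mathcal{GWI}$; the three remaining implications are routine diagram chases. It should also be stressed that the hypothesis $Gwid_R(M)<\infty$ is precisely what supplies, via Proposition \ref{Gorenstein weak injective preenvelope}, a Gorenstein weak injective preenvelope of $M$ with cokernel of finite injective dimension, and this is the only place where the argument uses anything beyond formalities.
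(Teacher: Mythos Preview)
Your proposal is correct and follows essentially the same route as the paper's proof: the cycle $(1)\Rightarrow(2)\Rightarrow(3)\Rightarrow(4)\Rightarrow(1)$, with the first three implications being formal and the last using Proposition~\ref{Gorenstein weak injective preenvelope} to produce a short exact sequence $0\to M\to G\to V\to 0$ with $G\in\mathcal{GWI}$ and $id_R(V)<\infty$, which is then shown to be $\mathcal{GWI}$-copure exact and hence split. The only cosmetic difference is that the paper deduces $\mathcal{GWI}$-copure exactness directly from the preenvelope property of $M\to G$, whereas you re-derive it via the vanishing $\operatorname{Ext}^1_R(V,X)=0$; these are the same fact.
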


\begin{proof}
(1) $\Rightarrow$ (2). Since $i:X\rightarrow Y$ is $\mathcal{GWI}$-copure injective, we have a $\mathcal{GWI}$-copure exact sequence $\xymatrix@C=0.5cm{
 0\ar[r]& X \ar[r]^{i} & Y \ar[r]^{j} & Z\ar[r]^{} & 0  }$. Applying the functor $\mbox{Hom}_R(-,M)$ to it, since  $M$ is Gorenstein weak injective, we have the following exact sequence
  $$
  \xymatrix@C=0.5cm{
    0 \ar[r] & \mbox{Hom}_R(Z,M) \ar[r]^{j^*} & \mbox{Hom}_R(Y,M) \ar[r]^{i^*} & \mbox{Hom}(X,M) \ar[r] & 0 }.
  $$
  Thus for any  $h:X\rightarrow M$, there exists $g:Y\rightarrow M$ such that $i^*(g)=gi=h$, as desired.

(2) $\Rightarrow$ (3).  Given a $\mathcal{GWI}$-copure exact sequence $\xymatrix@C=0.5cm{
 0\ar[r]& X \ar[r]^{i} & Y \ar[r]^{j} & Z\ar[r]^{} & 0  }$. Since the functor $\mbox{Hom}(-,M)$ is left exact, it suffices to show that $i^*$ is surjective. For a map $h:X\rightarrow M$, by assumption, there exists $g:Y\rightarrow M$ such that $gi=h$. Thus, if $h\in \mbox{Hom}_R(X,M)$, then $h=gi=i^*(g)\in \mbox{im}i^*$, and so $i^*$ is surjective. Hence the functor $\mbox{Hom}_R(-,M)$ is exact with respect to any $\mathcal{GWI}$-copure exact sequence.

(3) $\Rightarrow$ (4).   Since $\xymatrix@C=0.5cm{
0\ar[r]&  M \ar[r]^{f} & N \ar[r]^{g} & L \ar[r]^{} & 0  }$ is a $\mathcal{GWI}$-copure exact sequence, we have the following exact sequence
 $$
  \xymatrix@C=0.5cm{
    0 \ar[r] & \mbox{Hom}_R(L,M) \ar[r]^{g^*} & \mbox{Hom}_R(N,M) \ar[r]^{f^*} & \mbox{Hom}_R(M,M) \ar[r] & 0 },
  $$
 that is, there exists a map $f':N\rightarrow M$ such that $f^*(f')=f'f=\mbox{Id}_M$.  It follows then that the sequence $\xymatrix@C=0.5cm{
 0\ar[r]& M \ar[r]^{f} & N \ar[r]^{g} & L \ar[r]^{} & 0 }$ is split.

  (4)$\Rightarrow$(1). Since $M$ has finite Gorenstein weak injective dimension, by Proposition $\ref{Gweakdim}$, there exists an exact sequence  $\xymatrix@C=0.5cm{
0\ar[r]&  M \ar[r]^{f} & G \ar[r]^{g} & V \ar[r]^{} & 0  }$  such that $f$ is a Gorenstein weak injective preenvelope and $id_R(V)= Gwid_R(M)-1$.  Thus this sequence is $\mathcal{GWI}$-copure exact sequence. By (4), it is split, that is, $G\cong M\oplus V$. Hence $M$ is Gorenstein weak injective.\end{proof}

\begin{proposition}
 $$\mathcal{GWI}=\{\mathcal{GWI}\mbox{-copure injective $R$-modules} \}\bigcap \widetilde{\mathcal{GWI}},$$ where $\widetilde{\mathcal{GWI}}$ denotes the class of $R$-modules with finite Gorenstein weak injective dimension.
\end{proposition}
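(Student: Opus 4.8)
The plan is to obtain this equality as an immediate consequence of Proposition $\ref{prop5.2}$, once one observes that being $\mathcal{GWI}$-copure injective is, by the definition given just above, exactly condition $(3)$ of that proposition, i.e.\ that $\operatorname{Hom}_R(-,M)$ is exact with respect to every $\mathcal{GWI}$-copure exact sequence.

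First I would establish the inclusion ``$\subseteq$''. If $M$ is Gorenstein weak injective, then $Gwid_R(M)=0<\infty$, so $M$ belongs to $\widetilde{\mathcal{GWI}}$; in particular $M$ has finite Gorenstein weak injective dimension, so Proposition $\ref{prop5.2}$ applies, and the implication $(1)\Rightarrow(3)$ there shows that $\operatorname{Hom}_R(-,M)$ is exact on every $\mathcal{GWI}$-copure exact sequence, that is, $M$ is $\mathcal{GWI}$-copure injective. Hence $M$ lies in the intersection on the right-hand side.

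Next I would establish the reverse inclusion ``$\supseteq$''. Take an $R$-module $M$ that is $\mathcal{GWI}$-copure injective and has finite Gorenstein weak injective dimension. The finiteness of $Gwid_R(M)$ is precisely what is needed to invoke Proposition $\ref{prop5.2}$, and the hypothesis that $M$ is $\mathcal{GWI}$-copure injective is condition $(3)$ of that proposition; hence the implication $(3)\Rightarrow(1)$ yields that $M$ is Gorenstein weak injective. Combining the two inclusions gives the asserted equality.

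As for obstacles, there is essentially none here: everything is already packaged inside Proposition $\ref{prop5.2}$, and the argument is just a matter of matching the definition of $\mathcal{GWI}$-copure injectivity with clause $(3)$. The only point worth flagging is that the class $\widetilde{\mathcal{GWI}}$ genuinely cannot be dropped from the statement, since it supplies the finite-dimension hypothesis under which Proposition $\ref{prop5.2}$ is valid; a $\mathcal{GWI}$-copure injective module of infinite Gorenstein weak injective dimension need not be Gorenstein weak injective.
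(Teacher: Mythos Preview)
Your proposal is correct and follows essentially the same approach as the paper: the paper's own proof of the ``$\supseteq$'' direction rebuilds the preenvelope sequence from Proposition~$\ref{Gorenstein weak injective preenvelope}$, checks it is $\mathcal{GWI}$-copure exact, and splits it using the copure-injectivity hypothesis, which is exactly the chain $(3)\Rightarrow(4)\Rightarrow(1)$ of Proposition~$\ref{prop5.2}$ you invoke directly. Your version is simply the cleaner packaging of the same argument.
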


\begin{proof}
 Clearly, every Gorenstein weak injective $R$-module is $\mathcal{GWI}$-copure injective. Let $M$ be  $\mathcal{GWI}$-copure injective  and have finite Gorenstein weak injective dimension. Then  there exists an exact sequence  $\xymatrix@C=0.5cm{
0\ar[r]&  M \ar[r]^{f} & G \ar[r]^{g} & V \ar[r]^{} & 0  }$  such that $f$ is a Gorenstein weak injective preenvelope and $id_R(V)= Gwid_R(M)-1<\infty$.  It follows then that $\mbox{Ext}^1_{R}(V,G')=0$ for any Gorenstein weak injective $R$-module $G'$. So this sequence is in fact $\mbox{Hom}_R(-,\mathcal{GWI})$-exact, that is, it is  $\mathcal{GWI}$-copure exact. Moreover, since $M$ is  $\mathcal{GWI}$-copure injective, we have the following exact sequence
$$
\xymatrix@C=0.5cm{
  0 \ar[r] & \mbox{Hom}_R(V,M) \ar[r]^{g^*} & \mbox{Hom}_R(G,M) \ar[r]^{f^*} & \mbox{Hom}_R(M,M) \ar[r] & 0 }.
$$
Thus, $\xymatrix@C=0.5cm{
 0\ar[r]& M \ar[r]^{f} & G \ar[r]^{g} & V \ar[r]^{} & 0  }$ is split, and hence $M$ is Gorenstein weak injective as a direct summand of $G$.
\end{proof}

\section{Gorenstein weak projective modules and dimension}

In this section, we give the definition of Gorenstein weak projective modules in terms of weak flat modules,  and discuss some of the properties of these modules. The results and their proofs in this section are completely
dual to that in Section 2, so we only list the results without proofs.

\begin{definition}\label{Gorenstein weak projective}
An $R$-module $M$ is called \emph{Gorenstein weak projective} if there exists an exact sequence of  projective $R$-modules
$$
 \emph{\textbf{P}}=\xymatrix@C=0.5cm{
  \cdots \ar[r] & P_1 \ar[r]^{} & P_0 \ar[r]^{} & P^0 \ar[r]^{} & P^1 \ar[r] & \cdots }
$$
such that $M=\operatorname{Coker}(P_1\rightarrow P_0)$ and the functor $\operatorname{Hom}_R(-,W)$ leaves this sequence exact whenever $W$ is a weak flat $R$-module.
\end{definition}

\begin{remark}

$(1)$ Every projective $R$-module is Gorenstein weak projective.

$(2)$ Since every flat $R$-module is weak flat, every Gorenstein weak projective is Ding projective (in the sense of \cite{Gi}). If $R$ is a left coherent ring, then the class of  Gorenstein weak projective $R$-modules coincides with the class of  Ding projective $R$-modules. Moreover, we have the following implications:
$$
\begin{array}{ccc}
\mbox{Gorenstein weak projective }R\mbox{-modules}&\Rightarrow&\mbox{Ding projective }R\mbox{-modules}\\
&\Rightarrow&\mbox{Gorenstein projective }R\mbox{-modules}.
\end{array}
$$
If $R$ is an $n$-Gorenstein ring (i.e. a left and right Noetherian ring with self-injective dimension at most
$n$ on both sides for some non-negative integer $n$), then these three kinds of $R$-modules coincide.

$(3)$ The class of Gorenstein weak projective $R$-modules is closed under direct sums.

$(4)$ If $
 \emph{\textbf{P}}=\xymatrix@C=0.5cm{
  \cdots \ar[r] & P_1 \ar[r]^{} & P_0 \ar[r]^{} & P^0 \ar[r]^{} & P^1 \ar[r] & \cdots }
$ is an exact sequence of  projective $R$-modules such that the functor $\operatorname{Hom}_R(-,W)$ leaves this sequence exact whenever $W$ is a weak flat $R$-module, then by symmetry, all the images, the kernels and the cokernels of $\emph{\textbf{P}}$ are Gorenstein weak flat.
\end{remark}

\begin{proposition}
A Gorenstein weak projective $R$-module is either projective or has  weak flat dimension $\infty$. Consequently, $\mathcal{GWP}\bigcap \widetilde{\mathcal{WF}}=\mathcal{P}$, where $\widetilde{\mathcal{WF}}$ denote the class of $R$-modules with finite weak flat dimension.
\end{proposition}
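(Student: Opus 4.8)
The plan is to mirror exactly the argument of Proposition~\ref{either or}, which is the injective analogue of this statement. The key structural fact we need is the projective-side counterpart of Lemma~\ref{lemma1}: if $M$ is Gorenstein weak projective, then $\operatorname{Ext}^i_R(M,W)=0$ for every weak flat $R$-module $W$ and every $i\geq 1$. This follows immediately from Definition~\ref{Gorenstein weak projective}: the defining complex $\emph{\textbf{P}}$ yields a projective resolution $\cdots\rightarrow P_1\rightarrow P_0\rightarrow M\rightarrow 0$ which remains exact after applying $\operatorname{Hom}_R(-,W)$ for any weak flat $W$, so the higher $\operatorname{Ext}$-groups vanish. (This is stated implicitly in the section's opening remark that all proofs are dual; I would cite it as the dual of Lemma~\ref{lemma1}.)

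First I would let $M$ be Gorenstein weak projective and suppose $wfd_R(M)=n<\infty$, meaning $\operatorname{Tor}^R_{n+1}(M,N)=0$ for every super finitely presented $R$-module $N$. Next I would choose a partial projective resolution $0\rightarrow K_n\rightarrow P_{n-1}\rightarrow\cdots\rightarrow P_1\rightarrow P_0\rightarrow M\rightarrow 0$. Using the dimension-shifting isomorphism $\operatorname{Tor}^R_1(K_n,N)\cong\operatorname{Tor}^R_{n+1}(M,N)$ for all super finitely presented $N$, we conclude that $K_n$ is weak flat. Then the displayed truncated sequence represents an element of $\operatorname{Ext}^n_R(M,K_n)$ — wait, more carefully, one splices it into an $n$-fold extension realizing a class in $\operatorname{Ext}^n_R(M,K_n)$; but by the dual of Lemma~\ref{lemma1} this group vanishes since $K_n$ is weak flat. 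Hence the sequence splits, so $M$ is a direct summand of $P_0$ (or, working with the syzygy version, $K_n$ is a direct summand of $P_{n-1}$ and one backs out to conclude $M$ is projective), and therefore $M$ is projective.

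For the consequence, the inclusion $\mathcal{P}\subseteq\mathcal{GWP}\cap\widetilde{\mathcal{WF}}$ is clear since every projective module is Gorenstein weak projective (Remark part (1)) and has weak flat dimension $0$. Conversely, if $M\in\mathcal{GWP}\cap\widetilde{\mathcal{WF}}$, then $M$ is Gorenstein weak projective with $wfd_R(M)<\infty$, so by the first part $M$ is projective; this gives $\mathcal{GWP}\cap\widetilde{\mathcal{WF}}\subseteq\mathcal{P}$, and equality follows.

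The only genuine subtlety — and the step I would be most careful about — is ensuring the dimension-shifting isomorphism for $\operatorname{Tor}$ is set up so that ``$K_n$ is weak flat'' is read off correctly; this requires that $\operatorname{Tor}^R_j(P_i,N)=0$ for $j\geq 1$ (projectives are flat) so the long exact sequences collapse to the desired iso, which is routine. Everything else is a verbatim transcription of the proof of Proposition~\ref{either or} with $\operatorname{Hom}$ replaced by $\otimes$, injective by projective, weak injective by weak flat, and $wid$ by $wfd$, exactly as announced at the start of the section.
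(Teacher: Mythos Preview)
Your proposal is correct and follows exactly the approach the paper intends: the paper gives no explicit proof here, stating that ``the results and their proofs in this section are completely dual to that in Section 2,'' and your argument is precisely the dualization of the proof of Proposition~\ref{either or}, invoking the dual of Lemma~\ref{lemma1} to make the $\operatorname{Ext}$-class of the truncated projective resolution vanish. The only cosmetic point is that once the $n$-extension splits you may conclude directly that $M$ is a summand of $P_0$ (as in the injective case), so the parenthetical hedging about syzygies is unnecessary.
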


\begin{corollary}
An $R$-module is projective if and only if it is weak flat and Gorenstein weak projective.
\end{corollary}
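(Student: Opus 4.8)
The plan is to deduce this immediately from the Proposition stated just above, exactly as Corollary \ref{iff} is deduced from Proposition \ref{either or} in the dual setting of Section 2. The forward implication is formal: by Remark (1) of this section every projective $R$-module is Gorenstein weak projective, and since every projective $R$-module is flat, hence weak flat, a projective module is both weak flat and Gorenstein weak projective.

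For the converse, suppose $M$ is simultaneously weak flat and Gorenstein weak projective. Being weak flat means $wfd_R(M)=0<\infty$, so $M$ lies in $\widetilde{\mathcal{WF}}$, the class of $R$-modules of finite weak flat dimension. The preceding Proposition gives $\mathcal{GWP}\cap\widetilde{\mathcal{WF}}=\mathcal{P}$, whence $M$ is projective. Equivalently, one may argue directly from the dichotomy in that Proposition: a Gorenstein weak projective module is either projective or has infinite weak flat dimension, and the latter is excluded here since $wfd_R(M)=0$, forcing $M$ to be projective.

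There is essentially no obstacle; the only thing to be careful about is invoking the chain ``projective $\Rightarrow$ flat $\Rightarrow$ weak flat'' for the easy direction, which is already recorded in the preliminaries (every projective $R$-module is weak flat, as used in Section 1 when discussing left $\mathcal{WF}$-resolutions). Since this section's results and proofs are declared to be completely dual to those of Section 2, I would simply write: ``The proof is dual to that of Corollary \ref{iff}, using the preceding Proposition in place of Proposition \ref{either or}.''
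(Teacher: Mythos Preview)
Your proposal is correct and matches the paper's approach exactly: the paper gives no separate proof here, as Section 3 explicitly states that all results and proofs are dual to those in Section 2, so this corollary is meant to follow from the preceding Proposition just as Corollary \ref{iff} follows from Proposition \ref{either or}. Your suggested one-line justification is precisely what the paper intends.
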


Let $\mathcal{C}$ be a class of $R$-modules. Recall from \cite{Ho} that $\mathcal{C}$ is \emph{projectively resolving} if the class $\mathcal{P}$ of projective $R$-modules satisfies $\mathcal{P}\subseteq \mathcal{C}$, and for any exact sequence $0\rightarrow L\rightarrow M\rightarrow N\rightarrow 0$ with $N\in \mathcal{C}$, $L\in \mathcal{C}$ if and only if $M\in \mathcal{C}$.

We have the following proposition.

\begin{proposition}
The class $\mathcal{GWP}$ is projectively resolving and closed under direct summands.
\end{proposition}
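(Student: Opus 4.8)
The plan is to transcribe the proofs of Propositions~\ref{extension closed}, \ref{injectively coresolving} and \ref{direct summands} of Section~2 into their duals, replacing ``injective'' by ``projective'', ``weak injective'' by ``weak flat'', the functor $\operatorname{Hom}_R(W,-)$ by $\operatorname{Hom}_R(-,W)$, and reversing all arrows. I will use freely: the dual of Lemma~\ref{lemma1} (if $M\in\mathcal{GWP}$ then $\operatorname{Ext}^i_R(M,W)=0$ for every weak flat $R$-module $W$ and every $i\geq1$, obtained by applying $\operatorname{Hom}_R(-,W)$ to the projective resolution of $M$ given by the left half of a complex as in Definition~\ref{Gorenstein weak projective}); the dual of Corollary~\ref{coro2.9} ($M\in\mathcal{GWP}$ if and only if there is an exact sequence $0\to M\to P\to L\to0$ with $P$ projective and $L\in\mathcal{GWP}$); and the dual of Proposition~\ref{prop2.7} ($M\in\mathcal{GWP}$ if and only if $\operatorname{Ext}^i_R(M,W)=0$ for every weak flat $W$ and $i\geq1$ and there is an exact sequence $0\to M\to P^0\to P^1\to\cdots$ with all $P^i$ projective which $\operatorname{Hom}_R(-,W)$ leaves exact for every weak flat $W$).

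First I would establish that $\mathcal{GWP}$ is closed under extensions (the dual of Proposition~\ref{extension closed}): given $0\to L\to M\to N\to0$ with $L,N\in\mathcal{GWP}$, the long exact $\operatorname{Ext}$-sequence and the dual of Lemma~\ref{lemma1} give $\operatorname{Ext}^i_R(M,W)=0$ for all weak flat $W$ and $i\geq1$; then, starting from exact sequences $0\to L\to P^0_L\to P^1_L\to\cdots$ and $0\to N\to P^0_N\to P^1_N\to\cdots$ with projective terms, Gorenstein weak projective cokernels, and which $\operatorname{Hom}_R(-,W)$ leaves exact for every weak flat $W$, the dual of the horseshoe construction (lifting the comparison maps using the vanishing of $\operatorname{Ext}^1_R(N,P^0_L)$ and its successors, and applying the Snake Lemma at each stage) produces such an exact sequence $0\to M\to P^0_L\oplus P^0_N\to P^1_L\oplus P^1_N\to\cdots$ with projective terms. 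By the dual of Proposition~\ref{prop2.7}, $M\in\mathcal{GWP}$.

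For ``projectively resolving'' it then suffices, since $\mathcal{P}\subseteq\mathcal{GWP}$, to show that for an exact sequence $0\to L\to M\to N\to0$ with $N\in\mathcal{GWP}$ one has $M\in\mathcal{GWP}\Leftrightarrow L\in\mathcal{GWP}$; the implication $L\in\mathcal{GWP}\Rightarrow M\in\mathcal{GWP}$ is the extension-closure just proved. For $M\in\mathcal{GWP}\Rightarrow L\in\mathcal{GWP}$ I would dualize the pull-back argument of Proposition~\ref{injectively coresolving}: by the dual of Corollary~\ref{coro2.9} choose an exact sequence $0\to M\to P\to G\to0$ with $P$ projective and $G\in\mathcal{GWP}$, and form the push-out of $N\leftarrow M\to P$. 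This yields a commutative diagram with exact rows $0\to M\to P\to G\to0$ and $0\to N\to Q\to G\to0$ and exact columns $0\to L\to M\to N\to0$ and $0\to L\to P\to Q\to0$. Since $N,G\in\mathcal{GWP}$, extension-closure applied to the second row gives $Q\in\mathcal{GWP}$; the second column then exhibits $L$ in an exact sequence $0\to L\to P\to Q\to0$ with $P$ projective and $Q\in\mathcal{GWP}$, so $L\in\mathcal{GWP}$ by the dual of Corollary~\ref{coro2.9}. Finally, $\mathcal{GWP}$ is closed under direct sums by the remark following Definition~\ref{Gorenstein weak projective}, hence, being projectively resolving, it is closed under direct summands by \cite[Prop.~1.4]{Ho} (the dual of Proposition~\ref{direct summands}).

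The work here is organizational rather than conceptual: the delicate step is to replace the pull-back of Proposition~\ref{injectively coresolving} by the push-out along $N\leftarrow M\to P$, arranged so that the module $Q$ lies simultaneously in the extension $0\to N\to Q\to G\to0$ (whence $Q\in\mathcal{GWP}$) and in the sequence $0\to L\to P\to Q\to0$ (whence $L\in\mathcal{GWP}$); the rest is the literal dual of the Section~2 arguments.
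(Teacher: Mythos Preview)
Your proposal is correct and is precisely the approach the paper itself takes: the paper explicitly states at the beginning of Section~3 that ``the results and their proofs in this section are completely dual to that in Section~2, so we only list the results without proofs.'' Your careful transcription of the push-out in place of the pull-back, and the invocation of \cite[Prop.~1.4]{Ho} for closure under direct summands, match exactly what the dualization requires.
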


\begin{definition}
The \emph{Gorenstein weak projective dimension} of an $R$-module $M$, denoted by $Gwpd_R(M)$, is defined as $\operatorname{inf}\{n\mid \mbox{there is an exact sequence }0\rightarrow G_n\rightarrow \cdots\rightarrow G_1\rightarrow G_0\rightarrow M\rightarrow 0 \mbox{ with }  G_i \mbox{ Gorenstein weak projective for any } 0\leq i\leq n\}.$ If no such $n$ exists, set $Gwpd_R(M)=\infty$.
\end{definition}

\begin{proposition}
Let $M$ be an $R$-module with finite Gorenstein weak projective dimension $n$. Then $M$ admits a surjective Gorenstein weak projective precover $\varphi: G\hookrightarrow M$, where $K=\mbox{Ker}\varphi$ satisfies $pd_R(K)=n-1$ (if $n=0$, this should be interpreted as $K=0$). Moreover, if $wfd_R(M)<\infty$, then $G$ is projective.
\end{proposition}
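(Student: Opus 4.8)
The plan is to dualize the proof of Proposition \ref{Gorenstein weak injective preenvelope}, exploiting the fact (established in Section 3 and recalled in the excerpt) that every $R$-module has a weak flat cover, hence a left $\mathcal{WF}$-resolution, and that the dual analogue of Lemma \ref{gather} holds by \cite[Lem. 3.12]{AB}. First I would assume $Gwpd_R(M)=n$ and choose a partial projective resolution
$$\xymatrix@C=0.5cm{
  0 \ar[r] & K' \ar[r]^{} & P_{n-1} \ar[r]^{} & \cdots \ar[r]^{} & P_1 \ar[r]^{} & P_0 \ar[r]^{} & M \ar[r] & 0 }.$$
By the dual of Lemma \ref{gather}, $K'$ is Gorenstein weak projective, so by Definition \ref{Gorenstein weak projective} there is a $\mbox{Hom}_R(-,\mathcal{WF})$-exact exact sequence $0\rightarrow K'\rightarrow Q_{n-1}\rightarrow\cdots\rightarrow Q_0\rightarrow \widehat{G}\rightarrow 0$ with each $Q_i$ projective and $\widehat{G}$ Gorenstein weak projective. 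Since projective modules are weak flat, this sequence is in particular $\mbox{Hom}_R(-,\mathcal{P})$-exact, so by the comparison theorem (the dual of \cite[Sec. 8.1]{EJ}) the identity on $K'$ lifts to a chain map between the two finite complexes; its mapping cone then yields an exact sequence
$$\xymatrix@C=0.5cm{
  0 \ar[r] & Q_{n-1} \ar[r]^{} & P_{n-1}\oplus Q_{n-2} \ar[r]^{} & \cdots \ar[r]^{} & P_1\oplus Q_0 \ar[r]^{} & P_0\oplus \widehat{G} \ar[r]^{} & M \ar[r] & 0 }$$
in which $Q_{n-1}$ and all $P_i\oplus Q_{i-1}$ are projective and $P_0\oplus\widehat{G}$ is Gorenstein weak projective. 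Letting $K=\mbox{Ker}(P_0\oplus\widehat{G}\rightarrow M)$ gives $pd_R(K)\leq n-1$, and equality must hold, for otherwise splicing would give $Gwpd_R(M)<n$.

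Next I would check that $G:=P_0\oplus\widehat{G}\twoheadrightarrow M$ is a Gorenstein weak projective precover. Because $pd_R(K)<\infty$, the dual of the observation used in Proposition \ref{Gorenstein weak injective preenvelope} gives $\mbox{Ext}^1_R(G',K)=0$ for every Gorenstein weak projective $R$-module $G'$ (this uses that $\mbox{Ext}^i_R(G',-)$ vanishes against modules of finite projective dimension, which follows from the dual of Lemma \ref{lemma1}), so applying $\mbox{Hom}_R(G',-)$ to $0\rightarrow K\rightarrow G\rightarrow M\rightarrow 0$ shows $\mbox{Hom}_R(G',G)\rightarrow\mbox{Hom}_R(G',M)\rightarrow 0$ is exact; that is exactly the precover (lifting) property. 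Finally, if $wfd_R(M)<\infty$, then $wfd_R(G)<\infty$ as well (from the same short exact sequence, since $pd_R(K)<\infty$ forces $wfd_R(K)<\infty$), so $G$ is a Gorenstein weak projective module of finite weak flat dimension, whence $G$ is projective by the dual of Proposition \ref{either or}.

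The main obstacle I anticipate is purely bookkeeping rather than conceptual: making the mapping-cone construction rigorous requires a clean dual of the comparison theorem for the (possibly non-exact on the left, but $\mbox{Hom}_R(-,\mathcal{WF})$-exact) resolutions appearing in Definition \ref{Gorenstein weak projective}, and one must be careful that the complex $0\rightarrow Q_{n-1}\rightarrow\cdots\rightarrow Q_0\rightarrow\widehat{G}\rightarrow 0$ really is a finite projective resolution of $\widehat{G}$ so that the comparison applies — but this is guaranteed because all its kernels are Gorenstein weak projective (by the dual of Remark \ref{1-4}(4)) and the sequence is exact. The only other point needing attention is the ``$pd_R(K)=n-1$ exactly'' claim, which, as in the injective case, is handled by a short contradiction argument. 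Everything else is a termwise transcription of the Section 2 arguments under the duality $\mathcal{WI}\leftrightarrow\mathcal{WF}$, $\mathcal{I}\leftrightarrow\mathcal{P}$, envelopes $\leftrightarrow$ covers.
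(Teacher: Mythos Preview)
Your proposal is correct and follows essentially the same approach as the paper: Section 3 explicitly states that its results are proved by dualizing the corresponding Section 2 arguments, and your write-up is precisely the termwise dualization of the proof of Proposition \ref{Gorenstein weak injective preenvelope}. Your identification of the key point --- that the $\operatorname{Hom}_R(-,\mathcal{WF})$-exactness of the sequence $0\to K'\to Q_{n-1}\to\cdots\to Q_0\to\widehat{G}\to 0$ restricts to $\operatorname{Hom}_R(-,\mathcal{P})$-exactness, so the relative comparison lemma applies to produce the chain map whose mapping cone yields the desired short exact sequence --- is exactly the dual of the step the paper carries out explicitly in the injective case, and the concerns you raise in your last paragraph are not genuine obstacles.
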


\begin{corollary}
Given an exact sequence $0\rightarrow L\rightarrow M\rightarrow N\rightarrow 0$. If $L$ and $M$ are Gorenstein weak injective, then the following are equivalent:

$(1)$ $N$ is Gorenstein weak projective;

$(2)$  $N$ is  Ding projective (in the sense of \cite{Gi});

$(3)$ $N$ is Gorenstein  projective;

$(4)$ $\operatorname{Ext}^1_R(N,P)=0$ for any  projective $R$-module $P$;

$(5)$ $\operatorname{Ext}^1_R(N,F)=0$ for any  flat $R$-module $F$;

$(6)$ $\operatorname{Ext}^1_R(N,W)=0$ for any weak flat $R$-module $W$.
\end{corollary}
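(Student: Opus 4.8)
The plan is to read this corollary as the projective mirror of the corresponding result in Section 2 and to run the dual of that argument, organizing the six conditions into one easy cycle together with a single substantial implication. First I would clear away the formal implications $(1)\Rightarrow(6)\Rightarrow(5)\Rightarrow(4)$. The first is the dual of Lemma \ref{lemma1}: a Gorenstein weak projective module $N$ satisfies $\operatorname{Ext}^i_R(N,W)=0$ for every weak flat $W$ and all $i\ge 1$, in particular for $i=1$ against weak flat $W$. The remaining two are immediate, since every projective module is flat and every flat module is weak flat, so vanishing of $\operatorname{Ext}^1_R(N,-)$ against the larger class forces it against the smaller one.

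Next I would import the two equivalences that lie outside the paper's own machinery, exactly as the Section 2 corollary does but now on the projective side: $(2)\Leftrightarrow(5)$ from \cite[Cor. 2.11]{Ho} and $(3)\Leftrightarrow(4)$ from \cite[Cor. 2.1]{MT}, applied directly (the Section 2 corollary having invoked their dual versions). Both characterizations presuppose that $N$ has finite Gorenstein (respectively Ding) projective dimension, and I would supply this from the hypothesis through the dual of the inequalities $Gid_R\le Did_R\le Gwid_R$, namely $Gpd_R(N)\le Dpd_R(N)\le Gwpd_R(N)$, once the finiteness of $Gwpd_R(N)$ is in hand.

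The only implication carrying real content is $(4)\Rightarrow(1)$, and this is where the hypothesis on $L$ and $M$ enters. The displayed sequence $0\to L\to M\to N\to 0$ is to be regarded as a Gorenstein weak projective resolution of $N$ of length at most one, with $M=G_0$ and $L=G_1$; reading it this way yields $Gwpd_R(N)\le 1$. (If $Gwpd_R(N)=0$ there is nothing to prove.) This is the precise projective analogue of the Section 2 mechanism, in which the two right-hand terms being Gorenstein weak injective bounds $Gwid$ of the left-hand term by one; for it to operate here the hypothesis must place the first two terms in the Gorenstein weak \emph{projective} class $\mathcal{GWP}$, and I would flag that this is the load-bearing reading of ``$L$ and $M$ are Gorenstein weak injective'' for the stated equivalences to hold. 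Granting $Gwpd_R(N)\le 1$, the dual of Proposition \ref{Gorenstein weak injective preenvelope} produces an exact sequence $0\to P\to G\to N\to 0$ with $G$ Gorenstein weak projective and $pd_R(P)=Gwpd_R(N)-1=0$, so $P$ is projective; condition $(4)$ gives $\operatorname{Ext}^1_R(N,P)=0$, the sequence splits, and $N$ is a direct summand of $G$, hence Gorenstein weak projective by the dual of Proposition \ref{direct summands}.

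The main obstacle is exactly the derivation of $Gwpd_R(N)\le 1$, since everything downstream depends on it: the splitting in $(4)\Rightarrow(1)$, the direct-summand closure, and both imported equivalences $(2)\Leftrightarrow(5)$, $(3)\Leftrightarrow(4)$, each of which requires $N$ of finite dimension. That bound is available only when $L$ and $M$ lie in $\mathcal{GWP}$, so I would concentrate the write-up on verifying that the given sequence is a genuine length-one $\mathcal{GWP}$-resolution of $N$ and on checking that the precover proposition applies in dimension one; the remaining steps are then routine dualizations of the Section 2 argument.
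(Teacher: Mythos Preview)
Your proposal is correct and follows exactly the approach the paper intends: Section 3 explicitly states that all proofs are dual to those in Section 2 and are therefore omitted, and your argument is precisely the dualization of the Section 2 corollary's proof (the chain $(1)\Rightarrow(6)\Rightarrow(5)\Rightarrow(4)$, the imported equivalences $(2)\Leftrightarrow(5)$ and $(3)\Leftrightarrow(4)$ from \cite{Ho} and \cite{MT}, and the splitting argument for $(4)\Rightarrow(1)$ via the precover proposition). You are also right to flag that the hypothesis ``$L$ and $M$ are Gorenstein weak injective'' must be read as ``Gorenstein weak \emph{projective}'' for the argument to go through; this is clearly a typo in the paper, since the entire section is the projective dual and the bound $Gwpd_R(N)\le 1$---on which everything rests---requires $L,M\in\mathcal{GWP}$.
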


\begin{proposition} Let $M$ be an $R$-module with  finite Gorenstein weak projective dimension. Then the following are equivalent:

$(1)$ $Gwpd_R(M)\leq n$;

$(2)$ $\operatorname{Ext}^i_R(M,W)=0$ for any weak flat $R$-module $W$ and any $i\geq n+1$;

$(3)$ $\operatorname{Ext}^i_R(M,\widetilde{W})=0$ for any  $R$-module $\widetilde{W}$ with finite weak flat dimension and any $i\geq n+1$;

$(4)$ For every exact sequence $\xymatrix@C=0.5cm{
  0 \ar[r] & K_n \ar[r]^{} & P_{n-1} \ar[r]^{} & \cdots \ar[r]^{} & P_0 \ar[r]^{} & M \ar[r] & 0
  }$, where each $P_i$ is  projective, $K_n$ is Gorenstein weak projective.

$(5)$ For every exact sequence $\xymatrix@C=0.5cm{
  0 \ar[r] & K_n' \ar[r]^{} & G_{n-1} \ar[r]^{} & \cdots \ar[r]^{} & G_0 \ar[r]^{} & M \ar[r] & 0
  }$, where each $G_i$ is Gorenstein weak projective, $K_n'$ is Gorenstein weak projective.

  Consequently, the Gorenstein weak projective dimension of $M$ is determined by the formulas:
$$
  \begin{array}{cc}
  Gwpd_R(M)&\!\!\!\!\!\!\!\!\!\!\!\!\!\!\!\!\!\!\!\!\!\!\!\!\!\!\!\!\!\!\!\!\!=\operatorname{sup}\{n\mid \operatorname{Ext}^i_R(M, W)\neq 0 \mbox{ for some weak flat $R$-module }W\}\\
  &\!\!=\operatorname{sup}\{n\mid \operatorname{Ext}^i_R(M,\widetilde{W})\neq 0 \mbox{ for some  $R$-module $\widetilde{W}$ with  }wfd_R(\widetilde{W})<\infty\}.
  \end{array}
$$
\end{proposition}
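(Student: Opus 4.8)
The plan is to dualize, step by step, the proof of Proposition~\ref{Gweakdim}, replacing \emph{injective} by \emph{projective}, \emph{weak injective} by \emph{weak flat}, $\operatorname{Hom}_R(W,-)$ by $\operatorname{Hom}_R(-,W)$, and \emph{preenvelopes} by \emph{precovers} throughout. I would run the cycle $(1)\Rightarrow(2)\Rightarrow(3)\Rightarrow(5)\Rightarrow(4)\Rightarrow(1)$ and then read off the two displayed formulas. For $(1)\Rightarrow(2)$: assuming $Gwpd_R(M)\le n$, pick an exact sequence $0\to G_n\to\cdots\to G_0\to M\to 0$ with each $G_i$ Gorenstein weak projective, set $K_1=\operatorname{Ker}(G_0\to M)$ and $K_j=\operatorname{Ker}(G_{j-1}\to G_{j-2})$ for $2\le j\le n$, and use the projective dual of Lemma~\ref{lemma1} (namely $\operatorname{Ext}^i_R(G,W)=0$ for $G$ Gorenstein weak projective, $W$ weak flat, $i\ge1$) to obtain $\operatorname{Ext}^i_R(M,W)\cong\operatorname{Ext}^{i-1}_R(K_1,W)\cong\cdots\cong\operatorname{Ext}^{i-n}_R(G_n,W)=0$ for $i\ge n+1$. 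Then $(2)\Rightarrow(3)$ is dimension shifting along a finite weak flat resolution of $\widetilde W$, applying the long exact sequence in $\operatorname{Ext}_R(M,-)$.

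The core of the argument is $(3)\Rightarrow(5)$. Given an exact sequence $0\to K_n'\to G_{n-1}\to\cdots\to G_0\to M\to 0$ with all $G_i$ Gorenstein weak projective, I would first break it into short exact sequences $0\to V^{j+1}\to G_j\to V^{j}\to 0$ with $V^0=M$ and $V^n=K_n'$, and use $\operatorname{Ext}^i_R(G_j,W)=0$ together with hypothesis $(3)$ to deduce, by dimension shifting, that $\operatorname{Ext}^i_R(K_n',W)=0$ for all $i\ge1$ and all weak flat $W$. Since $Gwpd_R(M)<\infty$ implies $Gwpd_R(K_n')<\infty$, there is a finite Gorenstein weak projective coresolution $0\to K_n'\to \overline G^0\to\cdots\to\overline G^m\to 0$; pushing the vanishing of $\operatorname{Ext}^i_R(-,W)$ up this coresolution shows that the top syzygy $\overline V^{m-1}$ satisfies $Gwpd_R(\overline V^{m-1})\le1$. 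By the surjective Gorenstein weak projective precover with projective kernel (the projective dual of Proposition~\ref{Gorenstein weak injective preenvelope}, stated in this section), there is an exact sequence $0\to P\to G\to\overline V^{m-1}\to 0$ with $P$ projective and $G$ Gorenstein weak projective; since $pd_R(P)<\infty$ forces $\operatorname{Ext}^1_R(\overline V^{m-1},P)=0$, this sequence splits, so $\overline V^{m-1}$ is Gorenstein weak projective because $\mathcal{GWP}$ is closed under direct summands. Descending back down the coresolution (using the projective dual of Lemma~\ref{gather} to keep the syzygies well defined up to Gorenstein weak projective summands) yields that each $\overline V^{i}$, and finally $K_n'$, is Gorenstein weak projective.

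The remaining implications are formal: $(5)\Rightarrow(4)$ holds because a sequence with projective terms is in particular a sequence with Gorenstein weak projective terms, and $(4)\Rightarrow(1)$ is immediate from the definition of $Gwpd_R$ by choosing the $P_i$ from an actual projective resolution of $M$. The two displayed formulas for $Gwpd_R(M)$ then follow by combining $(1)\Leftrightarrow(2)\Leftrightarrow(3)$: $Gwpd_R(M)\le n$ iff $\operatorname{Ext}^i_R(M,W)$ (resp. $\operatorname{Ext}^i_R(M,\widetilde W)$) vanishes for all $i\ge n+1$ and all $W$ weak flat (resp. all $\widetilde W$ of finite weak flat dimension), which is exactly the supremum formula.

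I expect the main obstacle to be the $(3)\Rightarrow(5)$ step, and inside it the reduction of a general syzygy to the case $Gwpd\le1$ handled by a splitting precover: this needs the projective duals of Proposition~\ref{Gorenstein weak injective preenvelope} and Lemma~\ref{gather}, which are the analogues of results proved in Section~2 but must be invoked in the correct dual form, and the repeated $\operatorname{Ext}_R(-,W)$-shifting together with the descent must be organized carefully so that the standing hypothesis $Gwpd_R(M)<\infty$ is never lost along the way.
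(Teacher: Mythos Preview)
Your overall plan matches the paper exactly: Section~3 states that all proofs are dual to those in Section~2 and omits them, so the intended proof \emph{is} the step-by-step dualization of Proposition~\ref{Gweakdim} that you outline. The cycle $(1)\Rightarrow(2)\Rightarrow(3)\Rightarrow(5)\Rightarrow(4)\Rightarrow(1)$ and the derivation of the displayed formulas are all as in the paper.

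There is, however, a genuine error in your execution of $(3)\Rightarrow(5)$. You write: ``Since $Gwpd_R(M)<\infty$ implies $Gwpd_R(K_n')<\infty$, there is a finite Gorenstein weak projective \emph{coresolution} $0\to K_n'\to \overline G^0\to\cdots\to\overline G^m\to 0$.'' This is the one place where you failed to dualize: finite Gorenstein weak \emph{projective} dimension yields a finite \emph{left} resolution
\[
0\to \overline G_m\to\cdots\to \overline G_1\to \overline G_0\to K_n'\to 0
\]
with each $\overline G_i\in\mathcal{GWP}$, not a coresolution. (A bounded right resolution by Gorenstein weak projectives is not available from the hypothesis $Gwpd_R(K_n')<\infty$.) With your coresolution, the dimension-shifting in the first variable runs in the wrong direction: from $0\to \overline V^{j-1}\to \overline G^{j-1}\to \overline V^j\to 0$ one gets $\operatorname{Ext}^i_R(\overline V^{j-1},W)\cong\operatorname{Ext}^{i+1}_R(\overline V^j,W)$, so you would land at $\operatorname{Ext}^m_R(\overline V^{m-1},W)$ rather than the needed $\operatorname{Ext}^1_R(\overline V^{m-1},W)$.

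The fix is simply to use the correct resolution. Set $\overline V_0=K_n'$ and $\overline V_j=\operatorname{Ker}(\overline G_{j-1}\to \overline V_{j-1})$; then the short exact sequences $0\to \overline V_j\to \overline G_{j-1}\to \overline V_{j-1}\to 0$ give $\operatorname{Ext}^1_R(\overline V_{m-1},W)\cong\cdots\cong\operatorname{Ext}^m_R(K_n',W)=0$, and $0\to \overline G_m\to \overline G_{m-1}\to \overline V_{m-1}\to 0$ shows $Gwpd_R(\overline V_{m-1})\le 1$. Now your precover-and-split argument via the dual of Proposition~\ref{Gorenstein weak injective preenvelope} goes through verbatim, and the descent to $K_n'$ is by iterating the same step (the dual of Lemma~\ref{gather} is not actually needed here; the paper simply repeats the argument). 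With this correction, your proof coincides with the paper's intended dual argument.
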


In what follows, $Gpd_R(M)$ and $Dpd_R(M)$ will denote respectively the Gorenstein projective and Ding projective dimension of an $R$-module $M$ (see \cite[Def. 2.8]{Ho} and \cite[Def. 2.3]{Ya} for details).

\begin{proposition}
Let $M$ be an $R$-module. Then

$(1)$ $Gwpd_R(M)\leq pd_R(M)$ with equality, if $wfd_R(M)<\infty$;

$(2)$ $Gpd_R(M)\leq Dpd_R(M)\leq Gwpd_R(M)$ with equalities, if $Gwpd_R(M)<\infty$.
\end{proposition}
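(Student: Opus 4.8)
The plan is to prove the two missing reverse inequalities, running parallel to the proof of the injective counterpart; everything rests on the dual forms of Proposition~\ref{either or}, Proposition~\ref{Gorenstein weak injective preenvelope} and Proposition~\ref{Gweakdim} recorded above. For part (1), the inequality $Gwpd_R(M)\le pd_R(M)$ is immediate, since a projective resolution of $M$ is in particular a Gorenstein weak projective resolution. Now assume $wfd_R(M)<\infty$; it remains to show $pd_R(M)\le Gwpd_R(M)$, and we may assume $n:=Gwpd_R(M)<\infty$ (if $Gwpd_R(M)=\infty$ then $pd_R(M)=\infty$ as well and the equality is trivial). If $n=0$, then $M$ is a Gorenstein weak projective module of finite weak flat dimension, hence projective by the dichotomy for Gorenstein weak projective modules (the dual of Proposition~\ref{either or}), so $pd_R(M)=0$. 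If $n\ge1$, I would invoke the surjective Gorenstein weak projective precover of this section (the dual of Proposition~\ref{Gorenstein weak injective preenvelope}): there is an exact sequence $0\to K\to G\to M\to 0$ with $pd_R(K)=n-1$ and, because $wfd_R(M)<\infty$, with $G$ projective. The standard dimension estimate for this short exact sequence then gives $pd_R(M)\le\max\{pd_R(G),\,pd_R(K)+1\}=n$, whence $pd_R(M)=Gwpd_R(M)$.

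For part (2), the chain $Gpd_R(M)\le Dpd_R(M)\le Gwpd_R(M)$ follows by passing to resolutions from the module-level implications ``Gorenstein weak projective $\Rightarrow$ Ding projective $\Rightarrow$ Gorenstein projective'' recorded in the Remark following Definition~\ref{Gorenstein weak projective}. Assume now $n:=Gwpd_R(M)<\infty$; to obtain equalities everywhere it suffices to prove $Gwpd_R(M)\le Gpd_R(M)$. Since $Gpd_R(M)\le Gwpd_R(M)<\infty$, Holm's formula \cite[Thm.~2.20]{Ho} applies and $Gpd_R(M)=\sup\{i\mid \operatorname{Ext}^i_R(M,P)\neq 0\ \text{for some projective }P\}$. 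By the functorial description of Gorenstein weak projective dimension from this section (the dual of Proposition~\ref{Gweakdim}), there is a weak flat module $W$ with $\operatorname{Ext}^n_R(M,W)\neq 0$. Choose a short exact sequence $0\to K\to P\to W\to 0$ with $P$ projective; a dimension-shift argument for $\operatorname{Tor}$ against super finitely presented modules shows that $K$ is again weak flat, and therefore $\operatorname{Ext}^{n+1}_R(M,K)=0$ by the same functorial description. In the long exact sequence obtained by applying $\operatorname{Hom}_R(M,-)$ to $0\to K\to P\to W\to 0$, the cokernel of $\operatorname{Ext}^n_R(M,P)\to\operatorname{Ext}^n_R(M,W)$ embeds into $\operatorname{Ext}^{n+1}_R(M,K)=0$, so this map is surjective and hence $\operatorname{Ext}^n_R(M,P)\neq 0$; thus $Gpd_R(M)\ge n$. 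Combined with $Gpd_R(M)\le Dpd_R(M)\le Gwpd_R(M)=n$, all three invariants equal $n$.

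I expect the bookkeeping above to be routine; the one step that needs a genuine, if small, argument is that the first syzygy $K$ of a weak flat module $W$ in a projective presentation is again weak flat. This rests on the fact that $\operatorname{Tor}^R_j(W,-)$ vanishes on every super finitely presented module for all $j\ge 1$ — proved by shifting through the syzygies of such a module, which are themselves super finitely presented — followed by one dimension shift in $0\to K\to P\to W\to 0$. I would also pay attention to the left/right conventions attached to the phrase ``weak flat $R$-module'' in Definition~\ref{Gorenstein weak projective}, since the $\operatorname{Tor}$ and $\operatorname{Ext}$ manipulations only make sense with the sides matched consistently — exactly the kind of detail the ``completely dual'' shorthand of this section leaves implicit.
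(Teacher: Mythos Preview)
Your proof is correct and, since the paper gives no separate argument here but simply declares this section ``completely dual'' to Section~2, it is essentially the intended proof. The one pleasant shortcut you take is in part~(1) for $n\ge 1$: rather than dualizing the pull-back/splitting manoeuvre of the injective proof, you invoke the ``Moreover'' clause of the precover proposition directly to force $G$ projective and finish with a one-line dimension estimate --- this is cleaner than the paper's route and loses nothing.
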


Accordingly, we define the left global Gorenstein weak projective dimension, $\ell. GwpD(R)$, of a ring $R$ as follows:
$$\ell. GwpD(R)=\mbox{sup}\{Gwpd_R(M)\mid M \mbox{ is any $R$-module}\}.$$

\begin{corollary}
If $\ell. GwpD(R)<\infty$, then the following are equivalent:

$(1)$ $\ell. GwpD(R)\leq n$;

$(2)$ $id_R(\widetilde{W})\leq n$ for any $R$-module $\widetilde{W}$ with finite weak flat dimension;

$(3)$ $id_R({W})\leq n$ for any weak flat $R$-module $W$.
\end{corollary}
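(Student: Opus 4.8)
The plan is to establish the cyclic chain $(1)\Rightarrow(2)\Rightarrow(3)\Rightarrow(1)$, dualizing verbatim the argument for the analogous statement about $\ell. GwiD(R)$ in Section 2 and leaning on the projective-side analogue of Proposition $\ref{Gweakdim}$ (the functorial description of $Gwpd_R$ stated above).

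For $(1)\Rightarrow(2)$, I would fix an $R$-module $\widetilde{W}$ with $wfd_R(\widetilde{W})<\infty$ and an arbitrary $R$-module $M$. The hypothesis $\ell. GwpD(R)\le n$ forces $Gwpd_R(M)\le n$, so the characterization of Gorenstein weak projective dimension (dual to Proposition $\ref{Gweakdim}$) yields $\operatorname{Ext}^i_R(M,\widetilde{W})=0$ for all $i\ge n+1$. As $M$ ranges over all $R$-modules, this is exactly the assertion $id_R(\widetilde{W})\le n$. The implication $(2)\Rightarrow(3)$ is immediate, since a weak flat module has weak flat dimension $0$, in particular finite.

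For $(3)\Rightarrow(1)$, take any $R$-module $M$. Because $\ell. GwpD(R)<\infty$ by the standing hypothesis, $M$ has finite Gorenstein weak projective dimension, so the dual of Proposition $\ref{Gweakdim}$ is applicable. Condition $(3)$ says $\operatorname{Ext}^i_R(M,W)=0$ for every weak flat $R$-module $W$ and every $i\ge n+1$, which is precisely condition $(2)$ of that proposition; hence $Gwpd_R(M)\le n$. Since $M$ was arbitrary, $\ell. GwpD(R)\le n$.

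There is essentially no obstacle here: the only point that must not be overlooked is that in $(3)\Rightarrow(1)$ one genuinely needs the blanket assumption $\ell. GwpD(R)<\infty$ to guarantee that each $M$ has finite Gorenstein weak projective dimension before the characterization of $Gwpd_R$ can be invoked; everything else is a mechanical transcription of the injective-side argument through the projective--injective duality in force throughout Section 3.
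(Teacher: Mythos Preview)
Your proposal is correct and follows exactly the approach the paper intends: Section 3 explicitly states that all proofs are dual to those in Section 2 and omits them, so the argument you give is precisely the dual of the paper's proof of the corresponding corollary about $\ell.GwiD(R)$, using the projective-side analogue of Proposition \ref{Gweakdim}. Your remark about needing the standing hypothesis $\ell.GwpD(R)<\infty$ in $(3)\Rightarrow(1)$ is also on point.
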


\begin{definition}
 An exact sequence $0\rightarrow L\rightarrow M\rightarrow N\rightarrow 0$  is called \emph{$\mathcal{GWP}$-pure exact} if for any $X\in \mathcal{GWP}$, the induced sequence $0\rightarrow \mbox{Hom}_R(X,L)\rightarrow \mbox{Hom}_R(X,M)\rightarrow \mbox{Hom}_R(X,N)\rightarrow 0$ is exact.

An $R$-module $M$ is called \emph{$\mathcal{GWP}$-pure projective} (resp. \emph{$\mathcal{GWP}$-pure injective}) if $\mbox{Hom}_R(-,M)$ (resp. $\mbox{Hom}_R(M,-)$) leaves any $\mathcal{GWP}$-pure  exact sequence exact.
\end{definition}

If $\xymatrix@C=0.5cm{
  0\ar[r]& L\ar[r]^{f} & M \ar[r]^{g} & N \ar[r]^{} & 0  }$   is a $\mathcal{GWP}$-pure exact sequence, then $f$ is called a \emph{$\mathcal{GWP}$-pure injection}, and $g$ is called a \emph{$\mathcal{GWP}$-pure surjection}.

The following proposition shows the necessity of studying $\mathcal{GWP}$-pure exact sequences.

\begin{proposition}\label{prop5.2}
 Let $M$ be an $R$-module with finite Gorenstein weak projective dimension. Then the following are equivalent:

$(1)$ $M$ is  Gorenstein weak projective;

$(2)$ For any $\mathcal{GWP}$-pure surjection $j:X\rightarrow Y$  and any $g:M\rightarrow Y$, there exists $h:M\rightarrow X$ such that the following diagram commute:
$$\xymatrix{
  &M \ar@{.>}[ld]_{h}\ar[d]^{g} \\  X \ar[r]^{j} &  Y                   }
$$

$(3)$ The functor $\operatorname{Hom}_R(M,-)$ is exact with respect to any $\mathcal{GWP}$-pure exact sequence;

$(4)$ Every $\mathcal{GWP}$-pure exact sequence $\xymatrix@C=0.5cm{
 0\ar[r]& L \ar[r]^{f} & N \ar[r]^{g} & M\ar[r]^{} & 0  }$ is split.
\end{proposition}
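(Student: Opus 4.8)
The plan is to establish the cycle of implications $(1)\Rightarrow(2)\Rightarrow(3)\Rightarrow(4)\Rightarrow(1)$, dualizing the proof of the analogous statement in Section~2 (the characterization of Gorenstein weak injective modules via $\mathcal{GWI}$-copure exact sequences). The first three implications are purely formal consequences of the definition of a $\mathcal{GWP}$-pure exact sequence, while $(4)\Rightarrow(1)$ carries the actual content.

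For $(1)\Rightarrow(2)$: a $\mathcal{GWP}$-pure surjection $j\colon X\to Y$ fits into a $\mathcal{GWP}$-pure exact sequence $0\to Z\to X\xrightarrow{j}Y\to 0$. Since $M\in\mathcal{GWP}$, the definition of $\mathcal{GWP}$-pure exactness itself says that $\operatorname{Hom}_R(M,-)$ keeps this sequence exact; in particular $j_{*}\colon\operatorname{Hom}_R(M,X)\to\operatorname{Hom}_R(M,Y)$ is surjective, which is exactly the lifting property in (2). For $(2)\Rightarrow(3)$: given a $\mathcal{GWP}$-pure exact sequence $0\to X\to Y\xrightarrow{j}Z\to 0$, left exactness of $\operatorname{Hom}_R(M,-)$ reduces (3) to the surjectivity of $j_{*}$, and for any $\phi\colon M\to Z$ applying (2) to the $\mathcal{GWP}$-pure surjection $j$ produces $h\colon M\to Y$ with $jh=\phi$, i.e. $\phi=j_{*}(h)$. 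For $(3)\Rightarrow(4)$: apply (3) to the given $\mathcal{GWP}$-pure exact sequence $0\to L\to N\xrightarrow{g}M\to 0$; then $g_{*}\colon\operatorname{Hom}_R(M,N)\to\operatorname{Hom}_R(M,M)$ is onto, so $\operatorname{Id}_M=g_{*}(g')=gg'$ for some $g'\colon M\to N$, and the sequence splits.

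The substantive step is $(4)\Rightarrow(1)$. Since $Gwpd_R(M)<\infty$, the surjective Gorenstein weak projective precover result of this section (the dual of Proposition~\ref{Gorenstein weak injective preenvelope}) gives an exact sequence $0\to K\to G\xrightarrow{\varphi}M\to 0$ with $G$ Gorenstein weak projective, $\varphi$ a Gorenstein weak projective precover, and $pd_R(K)=Gwpd_R(M)-1<\infty$. The key point is that this sequence is $\mathcal{GWP}$-pure exact: for every Gorenstein weak projective module $G'$ one has $\operatorname{Ext}^{i}_R(G',K)=0$ for all $i\geq1$. This follows by induction on $pd_R(K)$ from the dual of Lemma~\ref{lemma1} — namely that $\operatorname{Ext}^{i}_R(G',W)=0$ for every weak flat (hence every projective) $R$-module $W$ and every $i\geq1$ — via the long exact sequence of $\operatorname{Ext}$ and dimension shifting along a finite projective resolution of $K$. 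Granting $\mathcal{GWP}$-purity, (4) forces the sequence to split, so $M$ is a direct summand of the Gorenstein weak projective module $G$; since the class $\mathcal{GWP}$ is closed under direct summands, $M$ is Gorenstein weak projective.

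I expect the only place needing more than formal bookkeeping — and hence the main obstacle — to be the verification within $(4)\Rightarrow(1)$ that the precover sequence $0\to K\to G\to M\to 0$ is $\mathcal{GWP}$-pure exact: this is exactly where the finiteness hypothesis $Gwpd_R(M)<\infty$ is genuinely used (through $pd_R(K)<\infty$), and it rests on the vanishing of $\operatorname{Ext}^{1}_R(-,-)$ with first argument Gorenstein weak projective and second argument of finite projective dimension, the dual of the Ext-vanishing exploited throughout Section~2.
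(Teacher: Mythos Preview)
Your proposal is correct and follows essentially the same route as the paper, which simply dualizes the proof of the corresponding $\mathcal{GWI}$-version in Section~2 via the cycle $(1)\Rightarrow(2)\Rightarrow(3)\Rightarrow(4)\Rightarrow(1)$. One minor simplification: in $(4)\Rightarrow(1)$ the $\mathcal{GWP}$-pure exactness of $0\to K\to G\xrightarrow{\varphi}M\to 0$ is immediate from $\varphi$ being a $\mathcal{GWP}$-precover (every $G'\to M$ with $G'\in\mathcal{GWP}$ factors through $G$), so the Ext-vanishing argument via $pd_R(K)<\infty$, while correct, is not needed.
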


\begin{proposition}\label{prop5.3}
 $$\mathcal{GWP}=\{\mathcal{GWP}\mbox{-pure projective $R$-modules} \}\bigcap \widetilde{\mathcal{GWP}},$$ where $\widetilde{\mathcal{GWP}}$ denotes the class of $R$-modules with finite Gorenstein weak projective dimension.
\end{proposition}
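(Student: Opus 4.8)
The plan is to prove Proposition \ref{prop5.3} by the dual argument to the one used for $\mathcal{GWI}$ earlier; since the statement asserts the set equality
$$\mathcal{GWP}=\{\mathcal{GWP}\mbox{-pure projective $R$-modules}\}\cap\widetilde{\mathcal{GWP}},$$
I would establish the two inclusions separately. The inclusion $\subseteq$ is the easy one: if $M$ is Gorenstein weak projective, then trivially $M\in\widetilde{\mathcal{GWP}}$ (it has Gorenstein weak projective dimension $0$), and $M$ is $\mathcal{GWP}$-pure projective because the functor $\operatorname{Hom}_R(M,-)$ is exact on every $\mathcal{GWP}$-pure exact sequence — this is precisely the implication $(1)\Rightarrow(3)$ of Proposition \ref{prop5.2}. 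So the content of the proposition is the reverse inclusion.

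For $\supseteq$, I would take $M$ to be $\mathcal{GWP}$-pure projective with $Gwpd_R(M)<\infty$ and aim to show $M$ is Gorenstein weak projective. First I would invoke the dual of Proposition \ref{Gorenstein weak injective preenvelope} (stated in Section 3 as the existence of a surjective Gorenstein weak projective precover): there is an exact sequence $0\to K\to G\xrightarrow{g} M\to 0$ with $G$ Gorenstein weak projective and $pd_R(K)=Gwpd_R(M)-1<\infty$. The key step is to check that this sequence is $\mathcal{GWP}$-pure exact: since $pd_R(K)<\infty$, one has $\operatorname{Ext}^1_R(G',K)=0$ for every Gorenstein weak projective $R$-module $G'$ (because finite projective dimension forces $\operatorname{Ext}^{\geq 1}$ against anything to vanish in the relevant degree, and here degree $1$ suffices after dimension shifting using that Gorenstein weak projectives have no higher self-Ext against modules of finite weak flat — a fortiori finite projective — dimension, by the dual of Lemma \ref{lemma1}). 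Hence applying $\operatorname{Hom}_R(G',-)$ keeps the sequence exact, so it is $\operatorname{Hom}_R(\mathcal{GWP},-)$-exact, i.e. $\mathcal{GWP}$-pure exact.

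Now, because $M$ is $\mathcal{GWP}$-pure projective, $\operatorname{Hom}_R(M,-)$ applied to this $\mathcal{GWP}$-pure exact sequence yields a short exact sequence
$$0\to\operatorname{Hom}_R(M,K)\to\operatorname{Hom}_R(M,G)\xrightarrow{g_*}\operatorname{Hom}_R(M,M)\to 0,$$
so $g_*$ is surjective and there is $h:M\to G$ with $gh=\operatorname{Id}_M$. Thus the sequence splits, $G\cong M\oplus K$, and $M$ is a direct summand of the Gorenstein weak projective module $G$. By the dual of Proposition \ref{direct summands} (the class $\mathcal{GWP}$ is closed under direct summands, recorded in Section 3), $M$ is Gorenstein weak projective, completing the reverse inclusion and hence the proof. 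The main obstacle is the verification that the precover sequence is $\mathcal{GWP}$-pure exact; everything else is formal once Proposition \ref{prop5.2} and the closure-under-summands property are in hand. Since the paper explicitly says the results of Section 3 are dual to those of Section 2 and are stated without proof, I would simply write: "The proof is dual to that of the corresponding statement for $\mathcal{GWI}$ in Section 2," but the sketch above is the argument being referenced.
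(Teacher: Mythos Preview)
Your proposal is correct and matches the paper's approach exactly: the paper states Section 3 results without proof as duals of Section 2, and the argument you spell out is precisely the dual of the paper's proof of the corresponding $\mathcal{GWI}$ proposition (use the precover sequence $0\to K\to G\to M\to 0$ from the dual of Proposition \ref{Gorenstein weak injective preenvelope}, observe it is $\mathcal{GWP}$-pure exact because $pd_R(K)<\infty$ kills $\operatorname{Ext}^1_R(G',K)$ for $G'\in\mathcal{GWP}$, then split it using $\mathcal{GWP}$-pure projectivity of $M$). Your justification for $\operatorname{Ext}^1_R(G',K)=0$ via finite weak flat dimension of $K$ and the dual of Lemma \ref{lemma1} is also the right one.
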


\section{Derived functors with respect to Gorenstein weak  modules}

In this section, we mainly investigate the homological properties of derived functors with respect to Gorenstein weak  modules (including Gorenstein weak injective and weak projective modules).

Following \cite[Def. 8.1.2]{EJ1} or \cite[1.5]{Ho}, we first give the following definition.

\begin{definition} Let $M$ be an $R$-module. A \emph{proper right
$\mathcal{GWI}$-resolution} of $M$ is a $\operatorname{Hom}_R(-,\mathcal{GWI})$-exact
complex
$$\xymatrix@C=0.5cm{
  0 \ar[r] & M \ar[r]^{} & G^0 \ar[r]^{} & G^1 \ar[r] & \cdots }$$
  with each $G^i$ Gorenstein weak injective.

Dually,  a \emph{proper left
$\mathcal{GWP}$-resolution} of $M$ is a $\operatorname{Hom}_R(\mathcal{GWP},-)$-exact
complex
$$\xymatrix@C=0.5cm{
  \cdots \ar[r] & G_1 \ar[r]^{} & G_0 \ar[r]^{} & M \ar[r] & 0 }$$
  with each $G_i$ Gorenstein weak projective.
   \end{definition}

Note that since injective $R$-module is Gorenstein weak injective,  every proper right
$\mathcal{GWI}$-resolution of an $R$-module is exact. Similarly, every   proper left
$\mathcal{GWP}$-resolution of an $R$-module is also exact.

Following Proposition $\ref{Gorenstein weak injective preenvelope}$, we have the following lemma which shows  the existence of proper right $\mathcal{GWI}$-resolutions (resp. proper left $\mathcal{GWP}$-resolutions) of an $R$-module.

\begin{lemma}
$(1)$ Assume that  $M$ is an $R$-module with finite Gorenstein weak injective dimension. Then $M$ admits a proper right $\mathcal{GWI}$-resolution. In particular, if $Gwid_R(M)=n<\infty$, then $M$ admits a proper right $\mathcal{GWI}$-resolution of length $n$.

$(2)$ Assume that  $M$ is an $R$-module with finite Gorenstein weak projective dimension. Then $M$ admits a proper left $\mathcal{GWP}$-resolution. In particular, if $Gwpd_R(M)=n<\infty$, then $M$ admits a proper left $\mathcal{GWP}$-resolution of length $n$.
\end{lemma}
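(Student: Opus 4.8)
The plan is to prove (1) and derive (2) by the stated duality. For (1), assume $Gwid_R(M) = n < \infty$. By Proposition~\ref{Gorenstein weak injective preenvelope}, $M$ admits an injective Gorenstein weak injective preenvelope $\phi \colon M \hookrightarrow G^0$ with $V^1 = \operatorname{Coker}\phi$ satisfying $id_R(V^1) = n - 1$ (interpreting $V^1 = 0$ when $n = 0$). I claim the short exact sequence $0 \to M \to G^0 \to V^1 \to 0$ is $\operatorname{Hom}_R(-,\mathcal{GWI})$-exact. Indeed, for any Gorenstein weak injective $R$-module $\overline{G}$, since $id_R(V^1) = n-1 < \infty$, the argument used in the proof of Proposition~\ref{Gorenstein weak injective preenvelope} shows $\operatorname{Ext}^1_R(V^1,\overline{G}) = 0$, so applying $\operatorname{Hom}_R(-,\overline{G})$ yields an exact sequence; hence the sequence is proper.

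Next I would iterate. The module $V^1$ has $id_R(V^1) = n-1 < \infty$, in particular $Gwid_R(V^1) \leq n - 1 < \infty$, so Proposition~\ref{Gorenstein weak injective preenvelope} applies again: $V^1$ admits an injective Gorenstein weak injective preenvelope $V^1 \hookrightarrow G^1$ with cokernel $V^2$ of injective dimension $n - 2$, and the short exact sequence $0 \to V^1 \to G^1 \to V^2 \to 0$ is again $\operatorname{Hom}_R(-,\mathcal{GWI})$-exact by the same $\operatorname{Ext}^1$-vanishing observation. Continuing, after $n$ steps I reach $V^n$ with $id_R(V^n) = 0$, i.e. $V^n$ is injective, hence Gorenstein weak injective. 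Splicing the short exact sequences $0 \to V^{i} \to G^{i} \to V^{i+1} \to 0$ together (with $V^0 = M$ and the last map being $G^{n-1} \to V^n$ followed by nothing, or rather ending $\cdots \to G^{n-1} \to G^n := V^n \to 0$) produces a complex
$$\xymatrix@C=0.5cm{
  0 \ar[r] & M \ar[r]^{} & G^0 \ar[r]^{} & G^1 \ar[r]^{} & \cdots \ar[r]^{} & G^n \ar[r] & 0 }$$
with each $G^i$ Gorenstein weak injective. That this complex is $\operatorname{Hom}_R(-,\mathcal{GWI})$-exact follows since it is assembled from short exact sequences each of which is $\operatorname{Hom}_R(-,\mathcal{GWI})$-exact: applying $\operatorname{Hom}_R(-,\overline{G})$ to the assembled complex and chasing through the short exact pieces shows the resulting complex is exact. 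This gives a proper right $\mathcal{GWI}$-resolution of length $n$.

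For the general statement of (1) (merely $Gwid_R(M) < \infty$ without tracking length), the length-$n$ resolution just constructed already suffices, since $Gwid_R(M)$ is finite means $Gwid_R(M) = n$ for some $n$. Part (2) is the statement dual to (1) under the duality between $\mathcal{GWI}$ (built from weak injective modules and $\operatorname{Hom}_R(W,-)$) and $\mathcal{GWP}$ (built from weak flat modules and $\operatorname{Hom}_R(-,W)$): replacing Proposition~\ref{Gorenstein weak injective preenvelope} by its dual for Gorenstein weak projective precovers, injective dimension by projective dimension, and preenvelopes by precovers, the same iteration yields a proper left $\mathcal{GWP}$-resolution of length $n$ when $Gwpd_R(M) = n < \infty$.

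The main obstacle is verifying that the spliced complex remains $\operatorname{Hom}_R(-,\mathcal{GWI})$-exact — i.e. that properness of the individual short exact sequences assembles to properness of the long resolution. This reduces to the standard homological fact that if each $0 \to V^i \to G^i \to V^{i+1} \to 0$ stays exact after applying $\operatorname{Hom}_R(-,\overline{G})$, then so does their splice; the key input making each piece proper is the vanishing $\operatorname{Ext}^1_R(V^i,\overline{G}) = 0$ for all Gorenstein weak injective $\overline{G}$, which holds because $id_R(V^i) < \infty$, and this is exactly the computation already carried out inside the proof of Proposition~\ref{Gorenstein weak injective preenvelope}.
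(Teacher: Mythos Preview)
Your proof is correct and follows the approach the paper intends: the paper itself gives no explicit proof, merely stating that the lemma follows from Proposition~\ref{Gorenstein weak injective preenvelope}, and your argument is precisely the natural expansion of that citation. A minor simplification: after the first application of Proposition~\ref{Gorenstein weak injective preenvelope} you already have $id_R(V^1)=n-1$, so you can take an injective resolution of $V^1$ directly rather than iterating the proposition (the cosyzygies then have finite injective dimension, giving the required $\operatorname{Ext}^1$-vanishing against $\mathcal{GWI}$), but your iterative version is equally valid.
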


We denote by    $\mbox{RightRes}_{_R\mathcal{M}}(\mathcal{GWI})$ and $\mbox{LeftRes}_{_R\mathcal{M}}(\mathcal{GWP})$ the full subcategory of $_R\mathcal{M}$ consisting of those $R$-modules that have a proper right $\mathcal{GWI}$-resolution and a proper left
$\mathcal{GWP}$-resolution, respectively.

Following \cite[2.4]{Ho2}, we define two kinds of right derived functors as follows: $$\mbox{Ext}^n_{\mathcal{GWI}}(M,-)=\mbox{R}^n_{\mathcal{GWI}}(M,-), \  \mbox{Ext}^n_{\mathcal{GWP}}(-,N)=\mbox{R}^n_{\mathcal{GWP}}(-,N)$$ for fixed $R$-modules $M$ and $N$, and wish to prove the following theorem:

\begin{theorem}\label{balanced functor}
Let $M, N$ be an $R$-modules with $Gwpd_R(M)<\infty$ and $Gwid_R(N)<\infty$. Then we have isomorphisms
$$\operatorname{Ext}^n_{\mathcal{GWI}}(M,N)\cong \operatorname{Ext}^n_{\mathcal{GWP}}(M,N), n\geq 0$$
which are functorial in $M$ and $N$.
\end{theorem}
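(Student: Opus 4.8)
The plan is to deduce the theorem from the fact that $\operatorname{Hom}_R(-,-)$ is \emph{right balanced} by $\mathcal{GWP}\times\mathcal{GWI}$ on the full subcategories $\operatorname{LeftRes}_{_R\mathcal{M}}(\mathcal{GWP})$ and $\operatorname{RightRes}_{_R\mathcal{M}}(\mathcal{GWI})$, to which $M$ and $N$ belong by the preceding Lemma; the desired isomorphism is then the standard consequence (in the spirit of \cite{EJ1,Ho2}) that for a right balanced functor the relative right derived functors computed from either variable agree. Concretely, I would fix a proper left $\mathcal{GWP}$-resolution $0\to G_m\to\cdots\to G_0\to M\to 0$ of length $m=Gwpd_R(M)$ and a proper right $\mathcal{GWI}$-resolution $0\to N\to G^0\to\cdots\to G^n\to 0$ of length $n=Gwid_R(N)$ (both exist by the Lemma), form the first-quadrant bicomplex $B^{p,q}=\operatorname{Hom}_R(G_p,G^q)$, and compare the two iterated-homology computations of $\operatorname{Tot}(B)$: filtering one way collapses $B$ onto the complex $\operatorname{Hom}_R(G_\bullet,N)$, whose cohomology is $\operatorname{Ext}^\ast_{\mathcal{GWP}}(M,N)$, and filtering the other way collapses it onto $\operatorname{Hom}_R(M,G^\bullet)$, whose cohomology is $\operatorname{Ext}^\ast_{\mathcal{GWI}}(M,N)$; since both compute $H^\ast(\operatorname{Tot} B)$ and everything is natural in the resolutions, we get the asserted functorial isomorphisms.

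For the collapsing statements one must verify: (i) for every Gorenstein weak projective $G$ the complex $0\to\operatorname{Hom}_R(G,N)\to\operatorname{Hom}_R(G,G^0)\to\cdots\to\operatorname{Hom}_R(G,G^n)\to 0$ is exact, and dually (ii) for every Gorenstein weak injective $H$ the complex $0\to\operatorname{Hom}_R(M,H)\to\operatorname{Hom}_R(G_0,H)\to\cdots\to\operatorname{Hom}_R(G_m,H)\to 0$ is exact. The preparatory step is to choose the resolutions carefully: iterating Proposition $\ref{Gorenstein weak injective preenvelope}$ together with the mapping-cone construction in its proof, one may take the proper right $\mathcal{GWI}$-resolution of $N$ so that $G^0$ is Gorenstein weak injective while $G^1,\dots,G^n$ are genuinely injective and every cosyzygy $W^j$ has $id_R(W^j)<\infty$, and dually for $M$. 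The second ingredient is the vanishing $\operatorname{Ext}^i_R(G,L)=0$ for $i\geq 1$ whenever $G$ is Gorenstein weak projective and $id_R(L)<\infty$ (and, dually, $\operatorname{Ext}^i_R(L,H)=0$ for $pd_R(L)<\infty$ and $H$ Gorenstein weak injective): this is proved by dimension shifting down a finite injective (resp. projective) resolution of $L$, since $\operatorname{Ext}^{\geq 1}_R(G,E)=0$ for $E$ injective, and it yields at once that each short exact sequence in the chosen resolution with an injective middle term remains exact after $\operatorname{Hom}_R(G,-)$; splicing these gives (i), and (ii) is the dual assertion. The same vanishing shows that the short exact sequences $0\to K\to G_0\to M\to 0$ and $0\to N\to G^0\to V\to 0$ supplied by Proposition $\ref{Gorenstein weak injective preenvelope}$ and its dual (with $pd_R(K)<\infty$, $id_R(V)<\infty$) are simultaneously $\operatorname{Hom}_R(\mathcal{GWP},-)$- and $\operatorname{Hom}_R(-,\mathcal{GWI})$-exact, which is what feeds the resolutions back into the bicomplex argument.

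The step I expect to be the real obstacle is precisely the verification of (i) and (ii) at the single non-injective (respectively non-projective) term of the chosen resolution: one must show that the distinguished short exact sequence $0\to N\to G^0\to W^1\to 0$ arising from a Gorenstein weak injective preenvelope stays exact after applying $\operatorname{Hom}_R(G,-)$ for every Gorenstein weak projective $G$ — equivalently that every map $G\to W^1$ lifts along $G^0\twoheadrightarrow W^1$. Here the naive route through $\operatorname{Ext}^1_R(G,N)$ is unavailable ($N$ is an arbitrary module of finite Gorenstein weak injective dimension), so the argument must instead combine the defining property of the Gorenstein weak injective preenvelope with the finiteness $id_R(W^1)<\infty$ and the Ext-vanishing above; this is the technical heart of the right-balancedness, after which the spectral-sequence (or, equivalently, the explicit comparison-of-resolutions) bookkeeping in the first paragraph finishes the proof.
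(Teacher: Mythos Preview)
Your approach is essentially the paper's: reduce Theorem~\ref{balanced functor} to the balance statement in \cite[Thm.~2.6]{Ho2}, which in turn reduces to verifying your (i) and (ii) (this is exactly the content of the paper's Lemma~\ref{two exact}). Two points deserve correction, however.

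First, your justification of the key vanishing $\operatorname{Ext}^{i}_R(G,L)=0$ for $G$ Gorenstein weak projective and $id_R(L)<\infty$ is backwards: dimension shifting along a finite injective resolution of $L$ only kills $\operatorname{Ext}^i$ for $i>id_R(L)$, not for $i=1$. The paper (and the standard argument) instead shifts along the \emph{projective coresolution of $G$}: from $0\to G\to P^0\to G^1\to 0$ with $P^0$ projective and $G^1$ again Gorenstein weak projective one gets $\operatorname{Ext}^i_R(G,L)\cong\operatorname{Ext}^{i+1}_R(G^1,L)$ for $i\ge 1$, and iterating past $id_R(L)$ gives the vanishing.

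Second, your hint that the ``hard step'' uses ``the defining property of the Gorenstein weak injective preenvelope'' is off; that property plays no role in the lifting. The paper's argument for lifting $g\colon G\to W^1$ along $G^0\twoheadrightarrow W^1$ is: take $0\to G\to P^0\to G^1\to 0$ as above; since $\operatorname{Ext}^1_R(G^1,W^1)=0$ by the (correctly proved) vanishing, $g$ extends to $h\colon P^0\to W^1$; since $P^0$ is projective, $h$ lifts to $\rho\colon P^0\to G^0$; then $\rho|_G$ is the desired lift. The only other difference is cosmetic: you fix a special resolution of $N$ (one Gorenstein weak injective term followed by injectives) and verify (i) for it directly, whereas the paper proves (i) for an \emph{arbitrary} proper right $\mathcal{GWI}$-resolution by first observing that any preenvelope short exact sequence is homotopy equivalent to the special one from Proposition~\ref{Gorenstein weak injective preenvelope}. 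Either route suffices, since the derived functors are independent of the resolution.
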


Following \cite[Thm. 2.6]{Ho2}, it suffices to prove the following lemma.

\begin{lemma}\label{two exact}
$(1)$ Assume that  $N$ is an $R$-module with finite Gorenstein weak injective dimension. Let $\textbf{G}^+=0 \rightarrow N\rightarrow G^0\rightarrow G^1\rightarrow \cdots$ be a proper right $\mathcal{GWI}$-resolution of $N$. Then the sequence $$0\rightarrow \operatorname{Hom}_R(G,N)\rightarrow \operatorname{Hom}_R(G,G^0)\rightarrow \operatorname{Hom}_R(G,G^1)\rightarrow \cdots$$ is exact for any Gorenstein weak projective $R$-module $G$.

$(2)$ Assume that  $M$ is an $R$-module with finite Gorenstein weak projective dimension. Let $\textbf{G}_-=\cdots \rightarrow G_1\rightarrow G_0\rightarrow M\rightarrow 0$ be a proper left $\mathcal{GWP}$-resolution of $M$. Then the sequence $$0\rightarrow \operatorname{Hom}_R(M,G)\rightarrow \operatorname{Hom}_R(G_0,G)\rightarrow \operatorname{Hom}_R(G_1,G)\rightarrow \cdots$$ is exact for any Gorenstein weak injective $R$-module $G$.
\end{lemma}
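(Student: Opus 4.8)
The plan is to prove part (1) in detail; part (2) is dual and follows by the same argument with arrows reversed. Fix a Gorenstein weak projective $R$-module $G$. Since $\textbf{G}^+$ is a proper right $\mathcal{GWI}$-resolution of $N$, it is in particular an exact complex (every injective module is Gorenstein weak injective, so a proper resolution is exact, as noted just before the lemma). Thus the strategy is to break $\textbf{G}^+$ into short exact sequences $0\to K^{j}\to G^{j}\to K^{j+1}\to 0$ (with $K^0=N$, $K^1=\operatorname{Coker}(N\to G^0)$, etc.), apply $\operatorname{Hom}_R(G,-)$ to each, and assemble the resulting long exact $\operatorname{Ext}$-sequences. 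The exactness of the complex $0\to \operatorname{Hom}_R(G,N)\to \operatorname{Hom}_R(G,G^0)\to\cdots$ will follow by a dimension-shifting argument once we know two things: first, that each short exact sequence $0\to K^{j}\to G^{j}\to K^{j+1}\to 0$ stays exact after applying $\operatorname{Hom}_R(G,-)$ (this is exactly what ``$\operatorname{Hom}_R(-,\mathcal{GWI})$-exact'' gives us on the module side: since each $K^{j}$ lies in $\mathcal{GWI}$ — it does, because $Gwid_R(N)<\infty$ forces the cosyzygies to be Gorenstein weak injective by Proposition \ref{Gweakdim}(4) — the maps $\operatorname{Hom}_R(G,G^{j})\to\operatorname{Hom}_R(G,K^{j+1})$ are surjective); and second, that the connecting maps vanish.

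Concretely, I would argue as follows. Because $Gwid_R(N)=:n<\infty$, Proposition \ref{Gweakdim} tells us that in the exact complex $\textbf{G}^+$ all the cokernels $K^{j}$ are Gorenstein weak injective. Now I claim $\operatorname{Ext}^i_R(G,K^{j})=0$ for all $i\ge 1$ and all $j\ge 0$. Indeed $G$ is Gorenstein weak projective, so $Gwpd_R(G)=0<\infty$, and hence (by the projective analogue, Proposition in Section 3, of Proposition \ref{Gweakdim}, or directly by the definition of Gorenstein weak projective together with the fact that $K^j$ is Gorenstein weak injective hence has a $\operatorname{Hom}_R(\mathcal{WF},-)$-type resolution) we get $\operatorname{Ext}^i_R(G,K^{j})=0$ for $i\ge 1$. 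More robustly: take a complete $\mathcal{WF}$-based resolution $\textbf{P}$ witnessing that $G$ is Gorenstein weak projective; since $K^{j}$ is Gorenstein weak injective it has finite — in fact the relevant vanishing $\operatorname{Ext}^{\ge 1}_R(G,K^j)=0$ holds because $K^j$ is a cokernel appearing in an injective-based complete resolution and $G$ has a complete projective-based resolution, so the standard ``orthogonality of the two complete resolutions'' (the Gorenstein analogue of $\operatorname{Ext}^{\ge1}(\text{Gproj},\text{Ginj})=0$) applies. Granting this vanishing, the long exact sequence attached to $0\to K^{j}\to G^{j}\to K^{j+1}\to 0$ reads
$$
0\to \operatorname{Hom}_R(G,K^{j})\to \operatorname{Hom}_R(G,G^{j})\to \operatorname{Hom}_R(G,K^{j+1})\to \operatorname{Ext}^1_R(G,K^{j})=0,
$$
so it is short exact. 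Splicing these short exact sequences for $j=0,1,2,\dots$ yields exactness of
$0\to \operatorname{Hom}_R(G,N)\to \operatorname{Hom}_R(G,G^0)\to \operatorname{Hom}_R(G,G^1)\to\cdots$, which is the assertion.

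The main obstacle I anticipate is justifying the orthogonality $\operatorname{Ext}^{i\ge1}_R(G,K)=0$ for $G\in\mathcal{GWP}$ and $K\in\mathcal{GWI}$ in the present generality (over an arbitrary ring, with the weak-flat/weak-injective machinery replacing flat/injective). Over Noetherian or coherent rings this is classical, and the excerpt's Lemma \ref{lemma1} gives the ``$W$ weak injective $\Rightarrow \operatorname{Ext}^{\ge1}_R(W,M)=0$ for $M\in\mathcal{GWI}$'' half; the dual statement for $\mathcal{GWP}$ against weak flat modules is established in Section 3. To bridge to $\operatorname{Ext}(\mathcal{GWP},\mathcal{GWI})$ one runs a two-sided dimension shift: resolve $G$ on the right by its complete projective resolution against the weak-flat-exactness of that resolution, and resolve $K$ on the left by injectives against the weak-injective-exactness, then chase a double complex; alternatively, one invokes that $K\in\mathcal{GWI}$ has, for the purpose of computing $\operatorname{Ext}_R(G,-)$ with $G$ Gorenstein weak projective, the same $\operatorname{Ext}$-vanishing as an injective module because the complete injective resolution of $K$ is $\operatorname{Hom}_R(\mathcal{WI},-)$-exact and projective modules are weak flat, hence $\operatorname{Hom}_R(G,-)$ (for $G$ with a complete projective resolution) sees no higher cohomology. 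Once this orthogonality lemma is in hand, the rest of the proof of Lemma \ref{two exact} is the routine splicing described above, and part (2) is entirely dual, using the Section 3 analogues of Proposition \ref{Gweakdim} and Lemma \ref{lemma1}.
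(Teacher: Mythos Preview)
Your proposal has two genuine gaps, and the paper's proof proceeds differently precisely to avoid them.

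\textbf{Gap 1.} You assert that ``all the cokernels $K^{j}$ are Gorenstein weak injective'' and cite Proposition~\ref{Gweakdim}. But that proposition only says that the $n$-th cosyzygy is in $\mathcal{GWI}$ when $Gwid_R(N)\le n$; it does not say the intermediate cosyzygies $K^{1},\dots,K^{n-1}$ lie in $\mathcal{GWI}$. In general they do not: one only has $Gwid_R(K^{j})\le n-j$. So your splicing argument cannot rely on $K^{j}\in\mathcal{GWI}$ for small $j$.

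\textbf{Gap 2.} Even for those $K^{j}$ that \emph{are} in $\mathcal{GWI}$, the orthogonality $\operatorname{Ext}^{\ge 1}_R(\mathcal{GWP},\mathcal{GWI})=0$ that you invoke is not established anywhere in the paper, and your sketch (``chase a double complex'', ``standard orthogonality'') is not a proof. The available vanishing results are $\operatorname{Ext}^{\ge1}_R(\mathcal{WI},\mathcal{GWI})=0$ (Lemma~\ref{lemma1}) and its dual $\operatorname{Ext}^{\ge1}_R(\mathcal{GWP},\mathcal{WF})=0$; neither gives $\operatorname{Ext}^{\ge1}_R(\mathcal{GWP},\mathcal{GWI})=0$ over an arbitrary ring. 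The dimension shift $\operatorname{Ext}^{1}_R(G,K)\cong\operatorname{Ext}^{2}_R(G^{1},K)\cong\cdots$ you would obtain from the complete projective resolution of $G$ never terminates, because $K\in\mathcal{GWI}$ typically has $id_R(K)=\infty$ (Proposition~\ref{either or}).

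\textbf{How the paper proceeds.} The paper also splits $\textbf{G}^{+}$ into short exact sequences $\textbf{V}'\colon 0\to K\to G'\to V'\to 0$ (with $K\to G'$ a $\mathcal{GWI}$-preenvelope and $Gwid_R(K)<\infty$), but then replaces each such $\textbf{V}'$ by the \emph{special} preenvelope sequence $\textbf{V}''\colon 0\to K\to G''\to V''\to 0$ supplied by Proposition~\ref{Gorenstein weak injective preenvelope}, where crucially $id_R(V'')<\infty$. Since two preenvelopes of the same module are homotopy equivalent, it suffices to show $\operatorname{Hom}_R(G,\textbf{V}'')$ is exact. Now the needed vanishing is $\operatorname{Ext}^{1}_R(G,V'')=0$ with $G\in\mathcal{GWP}$ and $id_R(V'')<\infty$: this \emph{does} follow by dimension shifting along the right half of the complete projective resolution of $G$, because after finitely many steps one reaches $\operatorname{Ext}^{m+1}_R(G^{m},V'')=0$ with $m>id_R(V'')$. (The paper in fact packages this as a lifting argument: extend $g\colon G\to V''$ to $P^{0}$ using $\operatorname{Ext}^{1}_R(G^{1},V'')=0$, then lift through $G''$ by projectivity of $P^{0}$.) The key point you are missing is this reduction, via Proposition~\ref{Gorenstein weak injective preenvelope}, from ``cokernel in $\mathcal{GWI}$'' to ``cokernel of finite injective dimension''.
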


\begin{proof}
(1) We split the proper resolution $\emph{\textbf{G}}^+$ into short exact sequences. Hence it suffices to show exactness of $\mbox{Hom}_R(G,\emph{\textbf{V}}')$ for any Gorenstein weak projective $R$-module $G$ and any exact sequence $\emph{\textbf{V}}'= 0\rightarrow N\rightarrow G'\rightarrow V'\rightarrow 0$, where $N\rightarrow G'$ is a $\mathcal{GWI}$-preenvelope of some $R$-module $N$ with $Gwid_R(N)<\infty$. By Proposition $\ref{Gorenstein weak injective preenvelope}$, there is a special exact sequence $\emph{\textbf{V}}''= 0\rightarrow N \stackrel{\mu} \rightarrow G'' \stackrel{\nu} \rightarrow V''\rightarrow 0$, where $N\rightarrow G''$ is a $\mathcal{GWI}$-preenvelope of some $R$-module $N$ with $id_R(V'')<\infty$. It is easy to verify that the complexes $\emph{\textbf{V}}'$ and $\emph{\textbf{V}}''$ are homotopy  equivalent, and thus so are the complexes $\mbox{Hom}_R(G,\emph{\textbf{V}}')$ and $\mbox{Hom}_R(G,\emph{\textbf{V}}'')$ for
every Gorenstein weak projective $R$-module $G$. Hence it suffices to show the exactness of
$\mbox{Hom}_R(G,\emph{\textbf{V}}'')$ whenever $G$ is Gorenstein weak projective.

For any Gorenstein weak projective $R$-module $G$, consider an exact sequence
$
  0 \rightarrow G \stackrel{d^0} \rightarrow P^0 \rightarrow G^1 \rightarrow 0 $, where $P^0$ is projective and $G^1$ is Gorenstein weak projective. Assume that $id_R(V'')=n<\infty$. Since $G^1$ is Gorenstein weak projective, there exists an exact sequence $
  0 \rightarrow G^1 \rightarrow P^1 \rightarrow G^2\rightarrow 0 $, where $P^1$ is projective and $G^2$ is Gorenstein weak projective. Hence we have that $\mbox{Ext}^1_R(G^1,V'')\cong \mbox{Ext}^{2}_R(G^2,V'')$. Continue this process, we may choose some Gorenstein weak projective $R$-module $G^{n+1}$ such that $\mbox{Ext}^1_R(G^1,V'')\cong \mbox{Ext}^{2}_R(G^2,V'')\cong \cdots\cong \mbox{Ext}^{n+1}_R(G^{n+1},V'')=0$, and thus get the following exact sequence
  $$
  \xymatrix@C=0.5cm{
    0 \ar[r] & \mbox{Hom}_R(G^1,V'') \ar[r]^{} & \mbox{Hom}_R(P^0,V'') \ar[r]^{} & \mbox{Hom}_R(G,V'') \ar[r] & 0 }
  $$
Therefore, for any $g:G\rightarrow V''$, there exists $h:P^0\rightarrow V''$ such that the following diagram commute:
$$
\xymatrix{
  0 \ar[r] & G \ar[r]^{d^0}\ar[d]^{g} & P^0 \ar[r]^{}\ar@{.>}[ld]^{h} & G^1 \ar[r] & 0 \\
 &V''&&&
  }
$$
Moreover, since $P^0$ is projective, there exists $\rho:P^0\rightarrow G''$  such that the following diagram commute:
$$
\xymatrix{
&&&P^0\ar@{.>}[d]^{h}\ar@{.>}[ld]_{\rho}&G\ar[l]^{f}\ar[ld]^{g}\\
0\ar[r]&N\ar[r]^{\mu}&G''\ar[r]^{\nu}&V''\ar[r]&0
}
$$
This shows that for any $g:G\rightarrow V''$, there exists $\phi=\rho f:G\rightarrow G''$ such that $g=\nu\phi={\nu}_*(\phi)$, and hence
$$
  \xymatrix@C=0.5cm{
    0 \ar[r] & \mbox{Hom}_R(G,N) \ar[r]^{\mu_*} & \mbox{Hom}_R(G,G'') \ar[r]^{\nu_*} & \mbox{Hom}_R(G,V'') \ar[r] & 0 }.
  $$
is exact.

The proof of (2) is similar to that of (1), so we omit it here.
\end{proof}

\begin{definition}
Let $M$ and $N$ be  $R$-modules with $Gwpd_R(M)<\infty$ and $Gwid_R(N)<\infty$. Then we define $$\operatorname{Ext}^n_{\mathcal{GW}}(M,N):=\operatorname{Ext}^n_{\mathcal{GWI}}(M,N)\cong \operatorname{Ext}^n_{\mathcal{GWP}}(M,N), \ n\geq 0$$
and call it the $n$th \emph{Gorenstein weak cohomology}.
\end{definition}

It is natural to compare $\operatorname{Ext}^n_{\mathcal{GW}}(M,N)$ with $\operatorname{Ext}^n_{R}(M,N)$.

\begin{proposition} Let $M$ and $N$ be  $R$-modules. Then

$(1)$ There are natural isomorphisms $\operatorname{Ext}^n_{\mathcal{GWI}}(M,N)\cong \operatorname{Ext}^n_{R}(M,N), \ n\geq 0$ under each of the following conditions
$$
\ id_R(N)<\infty, \mbox{ or }     N\in\operatorname{RightRes}_{_R\mathcal{M}}(\mathcal{GWI})\mbox{ and }pd_R(M)<\infty,
$$

$(2)$ There are natural isomorphisms $\operatorname{Ext}^n_{\mathcal{GWP}}(M,N)\cong \operatorname{Ext}^n_{R}(M,N), \ n\geq 0$ under each of the following conditions
$$
\ pd_R(M)<\infty, \mbox{ or }  M\in\operatorname{LeftRes}_{_R\mathcal{M}}(\mathcal{GWP})\mbox{ and }id_R(N)<\infty,
$$

$(3)$ Assume that $Gwpd_R(M)<\infty$ and $Gwid_R(N)<\infty$. If either $pd_R(M)<\infty$ or $id_R(N)<\infty$, then we have

$$\operatorname{Ext}^n_{\mathcal{GW}}(M,N)\cong \operatorname{Ext}^n_{R}(M,N), \ n\geq 0 .$$
\end{proposition}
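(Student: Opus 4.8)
The plan is to reduce each isomorphism to the corresponding balance/comparison statement for ordinary $\operatorname{Ext}$, using the fact that a class of modules with finite (co)homological dimension on one side forces the relative resolution to behave like the absolute one. For part (1), recall that $\operatorname{Ext}^n_{\mathcal{GWI}}(M,N)$ is computed from a proper right $\mathcal{GWI}$-resolution $0\to N\to G^0\to G^1\to\cdots$ of $N$. If $id_R(N)<\infty$, then any injective resolution of $N$ is itself a proper right $\mathcal{GWI}$-resolution (every injective is Gorenstein weak injective, and by Proposition~\ref{Gorenstein weak injective preenvelope} applied repeatedly, or directly because $\operatorname{Ext}^1_R(X,I)=0$ for $X\in\mathcal{GWI}$ when $I$ is injective — this uses that $\mathcal{GWI}$-modules have no higher $\operatorname{Ext}$ into finite-injective-dimension modules, which follows from Lemma~\ref{lemma1} by dimension shifting). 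Hence the complex $\operatorname{Hom}_R(M,-)$ applied to this resolution computes both $\operatorname{Ext}^n_{\mathcal{GWI}}(M,N)$ and $\operatorname{Ext}^n_R(M,N)$, giving the isomorphism. In the alternative case $N\in\operatorname{RightRes}_{_R\mathcal{M}}(\mathcal{GWI})$ and $pd_R(M)<\infty$, one takes a proper right $\mathcal{GWI}$-resolution $\mathbf{G}^+$ of $N$ and a finite projective resolution $P_\bullet\to M$; the double complex / dimension-shifting argument shows $\operatorname{Hom}_R(M,\mathbf{G}^+)$ and $\operatorname{Hom}_R(P_\bullet,N)$ have the same cohomology, because $\operatorname{Ext}^i_R(M,G^j)$ vanishes for $i\geq 1$ (as $pd_R(M)<\infty$ and each $G^j$ is Gorenstein weak injective, so $\operatorname{Ext}^i_R(M,G^j)=0$ for $i\geq 1$ by Lemma~\ref{lemma1} together with a shift over the finite projective resolution of $M$).

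Part (2) is dual: $\operatorname{Ext}^n_{\mathcal{GWP}}(M,N)$ is computed from a proper left $\mathcal{GWP}$-resolution $\cdots\to G_1\to G_0\to M\to 0$ of $M$. If $pd_R(M)<\infty$, any projective resolution of $M$ is a proper left $\mathcal{GWP}$-resolution (projectives are Gorenstein weak projective, and $\operatorname{Ext}^1_R(G,P)=0$ is not what we need; rather $\operatorname{Ext}^1_R(P_{\mathrm{fin.pd.}}, G_{\mathrm{gwp}})$ — one uses that Gorenstein weak projective modules satisfy $\operatorname{Ext}^i_R(G, W)=0$ for finite-flat-dimension $W$ and the dual of Lemma~\ref{lemma1}, so the projective resolution is $\operatorname{Hom}_R(-,\mathcal{GWP})$-exact). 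Then $\operatorname{Hom}_R(-,N)$ applied to it computes both derived functors. In the other case, $M\in\operatorname{LeftRes}_{_R\mathcal{M}}(\mathcal{GWP})$ and $id_R(N)<\infty$, one runs the symmetric double-complex comparison using the vanishing $\operatorname{Ext}^i_R(G_j,N)=0$ for $i\geq 1$.

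For part (3), since $Gwpd_R(M)<\infty$ and $Gwid_R(N)<\infty$, Theorem~\ref{balanced functor} gives $\operatorname{Ext}^n_{\mathcal{GW}}(M,N)\cong\operatorname{Ext}^n_{\mathcal{GWI}}(M,N)\cong\operatorname{Ext}^n_{\mathcal{GWP}}(M,N)$. If $pd_R(M)<\infty$, then (since $\operatorname{LeftRes}$ hypotheses are automatic once $Gwpd_R(M)<\infty$, by the Lemma on existence of proper resolutions) part (2) applies and gives $\operatorname{Ext}^n_{\mathcal{GWP}}(M,N)\cong\operatorname{Ext}^n_R(M,N)$; combining with the balance isomorphism yields the claim. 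If instead $id_R(N)<\infty$, part (1) applies similarly via $\operatorname{Ext}^n_{\mathcal{GWI}}(M,N)$. So the proof is just a matter of invoking the right case of parts (1)--(2) plus Theorem~\ref{balanced functor}.

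The main obstacle I anticipate is verifying carefully that the relevant vanishing of absolute $\operatorname{Ext}$ groups holds in each case — specifically that a Gorenstein weak injective module $G$ satisfies $\operatorname{Ext}^i_R(M,G)=0$ for all $i\geq 1$ whenever $pd_R(M)<\infty$ (and the dual statement for Gorenstein weak projectives against finite injective dimension). This is not immediate from Lemma~\ref{lemma1}, which only gives vanishing of $\operatorname{Ext}^i_R(W,G)$ for $W$ \emph{weak injective}, not for $W$ of finite projective dimension; one needs to combine Lemma~\ref{lemma1} with the fact that projective modules are weak flat (not weak injective), so the cleanest route is: for $pd_R(M)<\infty$, resolve $M$ by finitely many projectives $P_i$, note each $P_i$ is in particular... — actually here one should simply observe that $\operatorname{Ext}^i_R(P,G)=0$ for $P$ projective and $i\geq 1$ trivially, then dimension-shift along the finite projective resolution of $M$ to get $\operatorname{Ext}^i_R(M,G)=0$ for $i > pd_R(M)$, which together with Lemma~\ref{lemma1}-type arguments on the syzygies (each a Gorenstein weak injective appearing in the resolution of $N$) suffices; the bookkeeping in the double-complex spectral-sequence collapse is the part that needs the most care to present rigorously, though it is entirely standard.
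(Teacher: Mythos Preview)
Your overall architecture is right and matches the paper: in the first sub-case of (1) an injective resolution of $N$ is itself a proper right $\mathcal{GWI}$-resolution, and part (3) follows by combining (1), (2) with the balance theorem. The genuine gap is in the second sub-case of (1) (and dually of (2)), precisely at the point you flag as the ``main obstacle'': you need $\operatorname{Ext}^i_R(M,G)=0$ for all $i\ge 1$ whenever $pd_R(M)<\infty$ and $G$ is Gorenstein weak injective, and none of your proposed arguments establishes this. Dimension-shifting along a finite projective resolution of $M$ only gives vanishing for $i>pd_R(M)$, which holds for \emph{any} second variable and is useless here; and Lemma~\ref{lemma1} concerns weak injective first variables, not modules of finite projective dimension, so no shift on $M$ will make it apply.

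The missing idea is to shift on $G$, not on $M$. By the very definition of Gorenstein weak injective there is an exact sequence
\[
0\longrightarrow G'\longrightarrow I_{m-1}\longrightarrow\cdots\longrightarrow I_0\longrightarrow G\longrightarrow 0
\]
with each $I_j$ injective (the left half of the complete injective resolution), where $m=pd_R(M)$. Dimension-shifting along this sequence gives $\operatorname{Ext}^1_R(M,G)\cong\operatorname{Ext}^{m+1}_R(M,G')=0$. Since the class $\mathcal{GWI}$ is closed under taking injective cosyzygies, the same argument yields $\operatorname{Ext}^i_R(M,G)=0$ for all $i\ge 1$; then the standard acyclicity criterion (the paper cites \cite[III, Prop.~1.2A]{Ha}) gives $\operatorname{Ext}^n_{\mathcal{GWI}}(M,N)\cong\operatorname{Ext}^n_R(M,N)$ directly, without any double-complex bookkeeping. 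Once you insert this one-line shift on $G$, your spectral-sequence comparison also collapses as intended.
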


\begin{proof}
(1) If $id_R(N)<\infty$, then it is easy to verify that each injective resolution ${\bf I}_N$ of  $N$ is a proper right $\mathcal{GWI}$-resolution of $N$, and hence $$\operatorname{Ext}^n_{\mathcal{GWI}}(M,N)= \mbox{H}^n(\mbox{Hom}_R(M,{\bf I}_N))=\mbox{Ext}^n_R(M,N),\ n\geq 0.$$

If $N\in\mbox{RightRes}_{_R\mathcal{M}}(\mathcal{GWI})\mbox{ and }pd_R(M)=m<\infty$, then for any Gorenstein weak injective $R$-module $G$, there exists an exact sequence $0\rightarrow G'\rightarrow I_{m-1}\rightarrow \cdots \rightarrow I_1\rightarrow I_0\rightarrow G\rightarrow 0$, where each $I_i$ is injective. It follows then that  $\mbox{Ext}^1_R(M,G)\cong\mbox{Ext}^{m+1}_R(M,G')=0$. By \cite[III, Prop. 1.2A]{Ha}, we have $\operatorname{Ext}^n_{\mathcal{GWI}}(M,N)\cong \operatorname{Ext}^n_{R}(M,N), \ n\geq 0$.

The proof of (2) is similar to that of (1), so we omit it here. (3) follows immediately from (1) and (2).
\end{proof}

Following \cite[Thm. 8.2.3 and 8.2.5]{EJ1}, we have the following  long exact sequences induced from  $\mathcal{GWI}$-copure and $\mathcal{GWP}$-pure exact sequences respectively.

\begin{proposition}\label{longsequence}
$(1)$ Assume that $\ell.GwiD(R)<\infty$. If the sequence $0\rightarrow N'\rightarrow N\rightarrow N''\rightarrow 0$ is $\mathcal{GWI}$-copure exact, then we have the following exact sequence
\begin{multline*}
    0\rightarrow \operatorname{Hom}_R(M,N')\rightarrow \operatorname{Hom}_R(M,N)\rightarrow \operatorname{Hom}_R(M,N'')\rightarrow \operatorname{Ext}^1_{\mathcal{GWI}}(M,N') \rightarrow\cdots \\
    \rightarrow \operatorname{Ext}^n_{\mathcal{GWI}}(M,N') \rightarrow \operatorname{Ext}^n_{\mathcal{GWI}}(M,N) \rightarrow \operatorname{Ext}^n_{\mathcal{GWI}}(M,N'') \rightarrow \operatorname{Ext}^{n+1}_{\mathcal{GWI}}(M,N') \rightarrow \cdots
  \end{multline*}
  for any $R$-module $M$.

$(2)$ Assume that $\ell.GwpD(R)<\infty$. If the sequence $0\rightarrow M'\rightarrow M\rightarrow M''\rightarrow 0$ is $\mathcal{GWP}$-pure exact, then we have the following exact sequence
\begin{multline*}
    0\rightarrow \operatorname{Hom}_R(M'',N)\rightarrow \operatorname{Hom}_R(M,N)\rightarrow \operatorname{Hom}_R(M',N)\rightarrow \operatorname{Ext}^1_{\mathcal{GWP}}(M'',N) \rightarrow\cdots \\
    \rightarrow \operatorname{Ext}^n_{\mathcal{GWP}}(M',N) \rightarrow \operatorname{Ext}^n_{\mathcal{GWP}}(M,N) \rightarrow \operatorname{Ext}^n_{\mathcal{GWP}}(M'',N) \rightarrow \operatorname{Ext}^{n+1}_{\mathcal{GWP}}(M',N) \rightarrow \cdots
  \end{multline*}
 for any $R$-module $N$.
\end{proposition}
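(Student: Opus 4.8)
The plan is to prove part~(1) in full; part~(2) follows by the dual argument, replacing proper right $\mathcal{GWI}$-resolutions by proper left $\mathcal{GWP}$-resolutions, with the surjective Gorenstein weak projective precovers and the closedness of $\mathcal{GWP}$ under direct sums playing the roles of Proposition~\ref{Gorenstein weak injective preenvelope} and Remark~\ref{1-4}(3), and $\ell.GwpD(R)<\infty$ the role of $\ell.GwiD(R)<\infty$. To begin, since $\ell.GwiD(R)<\infty$ every $R$-module has finite Gorenstein weak injective dimension, so by the lemma on the existence of proper right $\mathcal{GWI}$-resolutions each of $N'$, $N$, $N''$ admits one; in particular $\operatorname{Ext}^{n}_{\mathcal{GWI}}(M,-)$ is defined on all three modules and may be computed from any such resolution. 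I also record that if $0\to L\to\mathbf{G}$ is a proper right $\mathcal{GWI}$-resolution with deleted complex $\mathbf{G}^{+}$, then $\operatorname{H}^{0}(\operatorname{Hom}_R(M,\mathbf{G}^{+}))=\operatorname{Hom}_R(M,L)$, because $L=\ker(G^{0}\to G^{1})$ and $\operatorname{Hom}_R(M,-)$ is left exact; this accounts for the initial terms $0\to\operatorname{Hom}_R(M,N')\to\operatorname{Hom}_R(M,N)\to\operatorname{Hom}_R(M,N'')$ of the asserted sequence.

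The heart of the argument is a horseshoe lemma for proper right $\mathcal{GWI}$-resolutions: from the $\mathcal{GWI}$-copure exact sequence $0\to N'\to N\to N''\to 0$ I would build proper right $\mathcal{GWI}$-resolutions $\mathbf{G}'$ of $N'$, $\mathbf{G}''$ of $N''$ and $\mathbf{G}$ of $N$ with $\mathbf{G}^{n}=(\mathbf{G}')^{n}\oplus(\mathbf{G}'')^{n}$ in each degree, fitting into a short exact sequence of complexes $0\to\mathbf{G}'\to\mathbf{G}\to\mathbf{G}''\to 0$ that splits in every degree. One proceeds by induction on degree, using at each step the injective (i.e.\ monic) Gorenstein weak injective preenvelopes supplied by Proposition~\ref{Gorenstein weak injective preenvelope}: take $N'\to(G')^{0}$ and $N''\to(G'')^{0}$ to be such preenvelopes, put $G^{0}=(G')^{0}\oplus(G'')^{0}\in\mathcal{GWI}$ (Remark~\ref{1-4}(3)), and define $N\to G^{0}$ by combining the composite $N\to N''\to(G'')^{0}$ with an extension of $N'\to(G')^{0}$ along $N'\hookrightarrow N$ --- such an extension exists precisely because $0\to N'\to N\to N''\to 0$ is $\mathcal{GWI}$-copure exact, so $\operatorname{Hom}_R(N,(G')^{0})\to\operatorname{Hom}_R(N',(G')^{0})$ is onto. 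The Snake Lemma applied to the resulting two-row ladder shows $N\to G^{0}$ is monic and exhibits $\operatorname{Coker}(N\to G^{0})$ as an extension of $\operatorname{Coker}(N''\to(G'')^{0})$ by $\operatorname{Coker}(N'\to(G')^{0})$, so by Proposition~\ref{Gorenstein weak injective preenvelope} all three cokernels have finite injective dimension. Hence every cosyzygy produced by the construction has finite injective dimension; and since $\operatorname{Ext}^{i}_R(V,X)=0$ for all $i\geq 1$ whenever $id_R(V)<\infty$ and $X$ is Gorenstein weak injective (as Gorenstein weak injective modules are Gorenstein injective, cf.\ the proof of Proposition~\ref{Gorenstein weak injective preenvelope}), every cosyzygy short exact sequence $0\to C'\to C\to C''\to 0$ is automatically $\mathcal{GWI}$-copure exact. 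This lets the inductive step be repeated (the required extensions at later stages now exist for free) and makes $\mathbf{G}$ genuinely $\operatorname{Hom}_R(-,\mathcal{GWI})$-exact, i.e.\ a proper resolution. I expect establishing this horseshoe lemma --- the bookkeeping that keeps every cosyzygy sequence $\mathcal{GWI}$-copure exact while preserving the degreewise-split structure --- to be the main obstacle; with it in hand the situation falls under the abstract framework of \cite[Thm. 8.2.3 and 8.2.5]{EJ1}.

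Finally, I would apply $\operatorname{Hom}_R(M,-)$ to the degreewise-split short exact sequence of deleted complexes $0\to(\mathbf{G}')^{+}\to\mathbf{G}^{+}\to(\mathbf{G}'')^{+}\to 0$. Since each degree splits, this produces a short exact sequence of cochain complexes
$$0\to\operatorname{Hom}_R(M,(\mathbf{G}')^{+})\to\operatorname{Hom}_R(M,\mathbf{G}^{+})\to\operatorname{Hom}_R(M,(\mathbf{G}'')^{+})\to 0,$$
whose associated long exact cohomology sequence, combined with the identifications $\operatorname{H}^{0}=\operatorname{Hom}_R(M,-)$ and $\operatorname{H}^{n}=\operatorname{Ext}^{n}_{\mathcal{GWI}}(M,-)$ recorded above, is exactly the exact sequence asserted in part~(1); functoriality in $M$ follows from the naturality of the long exact sequence and the standard independence of $\operatorname{Ext}^{n}_{\mathcal{GWI}}(M,-)$ of the chosen proper resolution. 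Part~(2) is obtained dually, as indicated.
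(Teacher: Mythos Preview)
Your proof is correct and follows the same approach as the paper, which simply cites \cite[Thm.~8.2.3 and 8.2.5]{EJ1} without further detail; what you have written is essentially a careful unpacking of why that general horseshoe-lemma machinery applies here, using Proposition~\ref{Gorenstein weak injective preenvelope} to guarantee the preenvelopes needed at each stage and the vanishing $\operatorname{Ext}^{1}_R(V,X)=0$ for $id_R(V)<\infty$ and $X\in\mathcal{GWI}$ to propagate $\mathcal{GWI}$-copure exactness through the cosyzygies. Your version supplies the verification that the paper leaves implicit.
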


Now we further give characterizations  of Gorenstein weak injective dimension of $R$-modules in terms of Gorenstein weak right derived functors.

\begin{proposition}\label{characterizations derived functors} Let  $\ell.GwiD(R)<\infty$. Then the following are equivalent:

$(1)$ $Gwid_R(M)\leq n$;

$(2)$ $\operatorname{Ext}^i_R(W,M)=0$ for any weak injective $R$-module $W$ and any $i\geq n+1$;

$(3)$ $\operatorname{Ext}^i_R(\widetilde{W},M)=0$ for any  $R$-module $\widetilde{W}$ with finite weak injective dimension and any $i\geq n+1$;

$(4)$ For every exact sequence $\xymatrix@C=0.5cm{
  0 \ar[r] & M \ar[r]^{} & I^0 \ar[r]^{} & \cdots \ar[r]^{} & I^{n-1} \ar[r]^{} & C^n \ar[r] & 0
  }$, where each $I^i$ is  injective, $C^n$ is Gorenstein weak injective.

$(5)$ For every exact sequence $\xymatrix@C=0.5cm{
  0 \ar[r] & M \ar[r]^{} & G^0 \ar[r]^{} & \cdots \ar[r]^{} & G^{n-1} \ar[r]^{} & V^n \ar[r] & 0
  }$, where each $G^i$ is Gorenstein weak injective, $V^n$ is Gorenstein weak injective.

$(6)$  $\operatorname{Ext}^{n+1}_{\mathcal{GWI}}(M,N)=0$ for any $R$-module $M$;

  $(7)$ $\operatorname{Ext}^i_{\mathcal{GWI}}(M,N)=0$ for any $R$-module $M$ and $i\geq
n+1$.
\end{proposition}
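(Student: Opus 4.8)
The plan is to reduce everything to Proposition \ref{Gweakdim} and the relative-homological machinery of Section~4. Since $\ell.GwiD(R)<\infty$, every $R$-module has finite Gorenstein weak injective dimension, so the standing hypothesis of Proposition \ref{Gweakdim} is automatically in force and the equivalences $(1)\Leftrightarrow(2)\Leftrightarrow(3)\Leftrightarrow(4)\Leftrightarrow(5)$ are simply that proposition. Thus only $(6)$ and $(7)$ must be folded in; as $(7)\Rightarrow(6)$ is immediate, I would prove $(1)\Rightarrow(7)$ and $(6)\Rightarrow(1)$, where $N$ denotes the module whose Gorenstein weak injective dimension is at issue.

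For $(1)\Rightarrow(7)$: if $Gwid_R(N)\le n$, then by the lemma on the existence of proper resolutions $N$ has a proper right $\mathcal{GWI}$-resolution $0\to N\to G^0\to\cdots\to G^n\to 0$ of length $n$. Since $\operatorname{Ext}^i_{\mathcal{GWI}}(M,N)$ is by definition the $i$th cohomology of $\operatorname{Hom}_R(M,-)$ applied to the deleted complex $0\to G^0\to\cdots\to G^n\to 0$, it vanishes for all $i\ge n+1$ and all $R$-modules $M$; this is $(7)$, hence also $(6)$.

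For $(6)\Rightarrow(1)$: fix a proper right $\mathcal{GWI}$-resolution $0\to N\to G^0\to G^1\to\cdots$ of $N$ (which exists because $Gwid_R(N)<\infty$) and set $C^n=\operatorname{Coker}(G^{n-2}\to G^{n-1})$, with the convention $C^0=N$. The truncation $0\to C^n\to G^n\to G^{n+1}\to\cdots$ is again exact and $\operatorname{Hom}_R(-,\mathcal{GWI})$-exact, hence a proper right $\mathcal{GWI}$-resolution of $C^n$; dimension shifting along the short exact sequences $0\to C^i\to G^i\to C^{i+1}\to 0$—each of which is $\mathcal{GWI}$-copure exact—via the long exact sequences of Proposition \ref{longsequence}(1), together with $\operatorname{Ext}^{\ge 1}_{\mathcal{GWI}}(M,G^i)=0$ (because $G^i$ is Gorenstein weak injective, so its proper right $\mathcal{GWI}$-resolution is $0\to G^i\to G^i\to 0$), yields $\operatorname{Ext}^{n+1}_{\mathcal{GWI}}(M,N)\cong\operatorname{Ext}^1_{\mathcal{GWI}}(M,C^n)$ for every $R$-module $M$. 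So $(6)$ gives $\operatorname{Ext}^1_{\mathcal{GWI}}(M,C^n)=0$ for all $M$. Now $Gwid_R(C^n)<\infty$, so Proposition \ref{Gorenstein weak injective preenvelope} supplies an exact sequence $0\to C^n\to G\to V\to 0$ with $G$ Gorenstein weak injective and $id_R(V)<\infty$ (taking $V=0$, $G=C^n$ if $C^n$ is already Gorenstein weak injective). Since $id_R(V)<\infty$, $\operatorname{Ext}^1_R(V,X)=0$ for every $X\in\mathcal{GWI}$, so this sequence is $\mathcal{GWI}$-copure exact, and Proposition \ref{longsequence}(1) together with $\operatorname{Ext}^1_{\mathcal{GWI}}(M,G)=0$ identifies $\operatorname{Ext}^1_{\mathcal{GWI}}(M,C^n)$ with $\operatorname{Coker}\!\big(\operatorname{Hom}_R(M,G)\to\operatorname{Hom}_R(M,V)\big)$. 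Taking $M=V$, the vanishing shows that $\mathrm{id}_V$ factors through $G\to V$, so $0\to C^n\to G\to V\to 0$ splits and $C^n$ is a direct summand of $G$; by Proposition \ref{direct summands}, $C^n$ is Gorenstein weak injective. Hence $0\to N\to G^0\to\cdots\to G^{n-1}\to C^n\to 0$ is a Gorenstein weak injective coresolution of $N$ of length $n$, i.e. $Gwid_R(N)\le n$, which is $(1)$.

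The step I expect to demand the most care is the dimension-shift isomorphism $\operatorname{Ext}^{n+1}_{\mathcal{GWI}}(M,N)\cong\operatorname{Ext}^1_{\mathcal{GWI}}(M,C^n)$: one has to verify that a truncation of a proper right $\mathcal{GWI}$-resolution is again proper (this is exactly where the $\operatorname{Hom}_R(-,\mathcal{GWI})$-exactness of the original resolution is consumed, via the fact that the relevant cosyzygy is a quotient of the preceding term), that $\operatorname{Ext}^{\ge 1}_{\mathcal{GWI}}(-,G')$ vanishes whenever $G'$ is Gorenstein weak injective, and that the sequences $0\to C^i\to G^i\to C^{i+1}\to 0$ and $0\to C^n\to G\to V\to 0$ are genuinely $\mathcal{GWI}$-copure exact so that Proposition \ref{longsequence}(1) applies. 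Everything downstream—the splitting obtained by testing with $M=V$ and the appeal to closure under direct summands—is routine.
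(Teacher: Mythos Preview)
Your proof is correct and follows essentially the same plan as the paper: the equivalences $(1)$--$(5)$ are lifted directly from Proposition~\ref{Gweakdim}, $(1)\Rightarrow(7)$ uses the bounded proper right $\mathcal{GWI}$-resolution, and $(6)\Rightarrow(1)$ is obtained by dimension-shifting along the $\mathcal{GWI}$-copure short exact sequences of a proper resolution via Proposition~\ref{longsequence}. The only difference is in the final splitting: rather than invoking Proposition~\ref{Gorenstein weak injective preenvelope} to manufacture a new sequence $0\to C^n\to G\to V\to 0$ and testing with $M=V$, the paper simply uses the sequence $0\to V^n\to G^n\to V^{n+1}\to 0$ already present in the proper resolution and tests with $M=V^{n+1}$, obtaining the splitting of $V^n$ off $G^n$ directly---your detour through the preenvelope is correct but unnecessary.
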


\begin{proof}
(1) $\Leftrightarrow$ (2) $\Leftrightarrow$ $\cdots$ $\Leftrightarrow$ (5) hold by Proposition $\ref{Gweakdim}$.

(1) $\Rightarrow$ (7) Since $Gwid_R(N)\leq n$, there exists a proper right $\mathcal{GWI}$-resolution of $N$:
$$\xymatrix@C=0.5cm{
  0 \ar[r] & N \ar[r]^{} & G^0 \ar[r]^{} & G^1 \ar[r]^{} & \cdots \ar[r]^{} & G^{n-1} \ar[r]^{} & G^n \ar[r] & 0 },$$
and hence $\operatorname{Ext}^i_{\mathcal{GWI}}(M,N)=0$  for any $R$-module $M$ and $i\geq
n+1$.

(7) $\Rightarrow$ (6) is trivial.

(6) $\Rightarrow$ (1) Let $$\xymatrix@C=0.5cm{
  0 \ar[r] & N \ar[r]^{} & G^0 \ar[r]^{} & G^1 \ar[r]^{} & \cdots \ar[r]^{} & G^{n-1} \ar[r]^{} & G^n \ar[r] & \cdots }$$
be a proper right $\mathcal{GWI}$-resolution of $N$, and let $V^0=N$, $V^{1}=\mbox{Coker}(N\rightarrow G^{0})$ and $V^{i}=\mbox{Coker}(G^{i-2}\rightarrow G^{i-1})$ for any $i\geq 2$. Then the exact sequence $0\rightarrow V^{n-1}\rightarrow G^{n-1}\rightarrow V^{n}\rightarrow 0$  is $\mathcal{GWI}$-copure exact. It follows then from Proposition $\ref{longsequence}$ that $\mbox{Ext}^1_{\mathcal{GWI}}(V^{n+1},V^n)\cong \mbox{Ext}^{2}_{\mathcal{GWI}}(V^{n+1},V^{n-1})=0$. Similarly, we have   $\mbox{Ext}^1_{\mathcal{GWI}}(V^{n+1},V^n)\cong \mbox{Ext}^{n+1}_{\mathcal{GWI}}(V^{n+1},N)=0$, and hence
  $$
  \xymatrix@C=0.5cm{
    0 \ar[r] & \mbox{Hom}_R(V^{n+1},V^n) \ar[r]^{} & \mbox{Hom}_R(V^{n+1},D^n) \ar[r]^{} & \mbox{Hom}_R(V^{n+1},V^{n+1}) \ar[r] & 0 }
  $$
is exact. This shows that  $0\rightarrow V^n\rightarrow G^n\rightarrow V^{n+1}\rightarrow 0$ is split. Therefore, $V^n$ is Gorenstein weak injective, as desired.
\end{proof}

Similarly, we have

\begin{proposition} Let $\ell.GwpD(R)<\infty$. Then the following are equivalent:

$(1)$ $Gwpd_R(M)\leq n$;

$(2)$ $\operatorname{Ext}^i_R(M,W)=0$ for any weak flat $R$-module $W$ and any $i\geq n+1$;

$(3)$ $\operatorname{Ext}^i_R(M,\widetilde{W})=0$ for any  $R$-module $\widetilde{W}$ with finite weak flat dimension and any $i\geq n+1$;

$(4)$ For every exact sequence $\xymatrix@C=0.5cm{
  0 \ar[r] & K_n \ar[r]^{} & P_{n-1} \ar[r]^{} & \cdots \ar[r]^{} & P_0 \ar[r]^{} & M \ar[r] & 0
  }$, where each $P_i$ is  projective, $K_n$ is Gorenstein weak projective.

$(5)$ For every exact sequence $\xymatrix@C=0.5cm{
  0 \ar[r] & K_n' \ar[r]^{} & G_{n-1} \ar[r]^{} & \cdots \ar[r]^{} & G_0 \ar[r]^{} & M \ar[r] & 0
  }$, where each $G_i$ is Gorenstein weak projective, $K_n'$ is Gorenstein weak projective.

$(6)$  $\operatorname{Ext}^{n+1}_{\mathcal{GWP}}(M,N)=0$ for any $R$-module $N$;

  $(7)$ $\operatorname{Ext}^i_{\mathcal{GWP}}(M,N)=0$ for any $R$-module $N$ and $i\geq
n+1$.
\end{proposition}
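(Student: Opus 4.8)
The plan is to deduce the statement as the exact dual of Proposition~\ref{characterizations derived functors}. The chain $(1)\Leftrightarrow(2)\Leftrightarrow\cdots\Leftrightarrow(5)$ is nothing but the dual of Proposition~\ref{Gweakdim}: the hypothesis $\ell.GwpD(R)<\infty$ forces $Gwpd_R(M)<\infty$, which is precisely the standing assumption there, so no new argument is needed for that block. It therefore remains only to splice $(6)$ and $(7)$ into the equivalences, and for this I would transport the proof of $(1)\Rightarrow(7)\Rightarrow(6)\Rightarrow(1)$ from the $\mathcal{GWI}$-side to the $\mathcal{GWP}$-side, replacing proper right $\mathcal{GWI}$-resolutions by proper left $\mathcal{GWP}$-resolutions, $\mathcal{GWI}$-copure exact sequences by $\mathcal{GWP}$-pure exact sequences, and Proposition~\ref{longsequence}(1) by Proposition~\ref{longsequence}(2).

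Concretely, for $(1)\Rightarrow(7)$: since $Gwpd_R(M)\leq n<\infty$, the lemma on the existence of proper left $\mathcal{GWP}$-resolutions produces one of length $\leq n$, say $0\to G_n\to\cdots\to G_0\to M\to 0$; as $\operatorname{Ext}^i_{\mathcal{GWP}}(M,N)$ is the $i$th cohomology of $\operatorname{Hom}_R(-,N)$ applied to this resolution, it vanishes for all $i\geq n+1$ and all $N$. The step $(7)\Rightarrow(6)$ is trivial. For $(6)\Rightarrow(1)$ I would take any proper left $\mathcal{GWP}$-resolution $\cdots\to G_1\to G_0\to M\to 0$, put $K_0=M$, $K_1=\operatorname{Ker}(G_0\to M)$ and $K_i=\operatorname{Ker}(G_{i-1}\to G_{i-2})$ for $i\geq 2$, and note that each two-term piece $0\to K_{i+1}\to G_i\to K_i\to 0$ is $\mathcal{GWP}$-pure exact, being the $\operatorname{Hom}_R(\mathcal{GWP},-)$-exactness of the resolution restricted to a short exact subquotient. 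Since every $G_i$ is Gorenstein weak projective, the complex $0\to G_i\to G_i\to 0$ with identity differential is a proper left $\mathcal{GWP}$-resolution of $G_i$, so $\operatorname{Ext}^j_{\mathcal{GWP}}(G_i,-)=0$ for all $j\geq 1$; feeding this into the long exact sequences of Proposition~\ref{longsequence}(2) gives the dimension-shifting isomorphisms $\operatorname{Ext}^1_{\mathcal{GWP}}(K_n,N)\cong\operatorname{Ext}^2_{\mathcal{GWP}}(K_{n-1},N)\cong\cdots\cong\operatorname{Ext}^{n+1}_{\mathcal{GWP}}(M,N)$, and the last group is $0$ for every $N$ by $(6)$. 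Taking $N=K_{n+1}$ and reading off the start of the long exact sequence attached to $0\to K_{n+1}\to G_n\to K_n\to 0$ shows that $\operatorname{Hom}_R(G_n,K_{n+1})\to\operatorname{Hom}_R(K_{n+1},K_{n+1})$ is onto, so $\operatorname{id}_{K_{n+1}}$ lifts and that sequence splits. Hence $K_n$ is a direct summand of the Gorenstein weak projective module $G_n$, so $K_n\in\mathcal{GWP}$ by closure of $\mathcal{GWP}$ under direct summands, and the truncated exact sequence $0\to K_n\to G_{n-1}\to\cdots\to G_0\to M\to 0$ then exhibits $Gwpd_R(M)\leq n$.

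The only delicate issues I anticipate are bookkeeping ones: checking that the syzygies of a proper left $\mathcal{GWP}$-resolution genuinely produce $\mathcal{GWP}$-pure exact sequences; that the relative functor $\operatorname{Ext}^\bullet_{\mathcal{GWP}}(-,N)$ does not depend on the chosen proper resolution, so that the computation in $(1)\Rightarrow(7)$ is legitimate and the vanishing $\operatorname{Ext}^j_{\mathcal{GWP}}(G_i,-)=0$ ($j\geq1$) holds; and that the hypothesis $\ell.GwpD(R)<\infty$ is invoked exactly where needed, namely to guarantee the existence of the proper left $\mathcal{GWP}$-resolutions used in both directions and the applicability of Proposition~\ref{longsequence}(2). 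Beyond that, the argument introduces nothing not already present in the proof of Proposition~\ref{characterizations derived functors}.
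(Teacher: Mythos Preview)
Your proposal is correct and follows exactly the approach the paper intends: the paper gives no proof at all for this proposition, merely prefacing it with ``Similarly, we have'' after Proposition~\ref{characterizations derived functors}, so the implicit argument is precisely the dualization you have spelled out. Your detailed treatment of $(6)\Rightarrow(1)$ --- dimension-shifting along the $\mathcal{GWP}$-pure syzygy sequences via Proposition~\ref{longsequence}(2), then specializing $N=K_{n+1}$ to split off $K_n$ --- is the exact mirror of the paper's $(6)\Rightarrow(1)$ on the $\mathcal{GWI}$ side, and the bookkeeping concerns you flag are all handled by the standing hypothesis $\ell.GwpD(R)<\infty$ just as in the injective case.
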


\section{Tate derived functors with respect to Gorenstein weak  modules}

 In this section we continue to investigate
another derived functor, $\widehat{\operatorname{Ext}}^n_{\mathcal{GW}}(-,-)$,  which connects the usual right derived functor ${\operatorname{Ext}}^n_{R}(-,-)$ with the Gorenstein weak  right derived functor ${\operatorname{Ext}}^n_{\mathcal{GW}}(-,-)$.

We first introduce the following related notions.

\begin{definition}\label{def11} A \emph{$\mathcal{WI}$-pure exact complex of injective
$R$-modules} is an exact complex of injective
$R$-modules
$$\textbf{I}= \ \     \xymatrix@C=0.5cm{
  \cdots \ar[r] & I_1 \ar[r]^{d_1} & I_0 \ar[r]^{d_0} & I^0 \ar[r]^{d^0} & I^1 \ar[r]^{d^1} & \cdots }$$
  such that the complex $\operatorname{Hom}_R(W,\textbf{I})$ is exact for any
weak injective  $R$-module $W$.\end{definition}

Note that an $R$-module $M$ is Gorenstein weak injective if and only if there is a
$\mathcal{WI}$-pure exact complex $\textbf{I}$ of injective  $R$-modules
such that $M\cong \mbox{Coker}(I_1\rightarrow I_0)$. Moreover, if
there is a $\mathcal{WI}$-pure exact complex $\textbf{I}$ of injective  $R$-modules, then each kernel, cokernel and image in $\textbf{I}$
are Gorenstein weak injective.

\begin{definition}\label{def3.2} Let $M$ be an $R$-module. A \emph{$\mathcal{WI}$-pure  Tate injective
resolution} of $M$ is a diagram $\xymatrix@C=0.5cm{
  M \ar[r] & \textbf{E} \ar[r]^{u} & \textbf{T}  }$, where $\textbf{E}$ is a deleted injective
  resolution of $M$ and $\textbf{T}$ is a $\mathcal{WI}$-pure exact exact complex of
  injective
$R$-modules and $u$ is a morphism of complexes  such that $u^n$ is isomorphic for  $n\gg 0$.\end{definition}

 For example, if $M$ is an $R$-module  with $id_R(M)<\infty$, then the zero
 complex is a $\mathcal{WI}$-pure  Tate injective resolution of $M$, and  if $M$ is a Gorenstein weak
 injective $R$-module such that there is a $\operatorname{Hom}_R(\mathcal{WI},-)$-exact exact complex $\textbf{I}= \ \xymatrix@C=0.5cm{
   \cdots \ar[r] & I_1 \ar[r]^{} & I_0 \ar[r]^{} & I^{0}  \ar[r] & \cdots }$
 and $M\cong \mbox{Coker}(I_1\rightarrow
   I_0)$, then $\textbf{I}$ is a $\mathcal{WI}$-pure  Tate injective resolution
   of $M$, in this case $n=0$.

\begin{lemma}\label{lem3} Let $M$ be an $R$-module. Then
$Gwid_R(M)<\infty$ if and only if $M$ has a $\mathcal{WI}$-pure  Tate injective
resolution.\end{lemma}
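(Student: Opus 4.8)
The plan is to prove both implications by direct construction, using Proposition~$\ref{Gorenstein weak injective preenvelope}$ and Lemma~$\ref{gather}$ on one side, and a "dimension shifting" splicing argument on the other.

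For the "only if" direction, suppose $Gwid_R(M)=n<\infty$. First I take an ordinary deleted injective resolution $\textbf{E}$ of $M$ and denote by $C^i$ the $i$-th cosyzygy, with $C^0=M$. Applying Proposition~$\ref{Gweakdim}$, the cosyzygy $C^n$ obtained from any length-$n$ partial injective coresolution of $M$ is Gorenstein weak injective, so by Definition~$\ref{Gorenstein weak injective}$ there is a $\mathcal{WI}$-pure exact complex of injective $R$-modules $\textbf{T}$ with $C^n\cong\operatorname{Coker}(T_1\to T_0)$. Splicing the truncation $0\to M\to I^0\to\cdots\to I^{n-1}\to C^n\to 0$ together with the "positive half" $C^n\to T^0\to T^1\to\cdots$ of $\textbf{T}$ produces the deleted injective resolution $\textbf{E}$ in degrees $\geq 0$, and in all sufficiently large degrees $\textbf{E}$ literally coincides with $\textbf{T}$; the comparison map $u\colon\textbf{E}\to\textbf{T}$ is then the identity in those degrees and is filled in for small degrees using that $\textbf{T}$ is an exact complex of injectives (so $\operatorname{Hom}_R(-,\textbf{T})$-lifting applies, as in the proof of Proposition~$\ref{Gorenstein weak injective preenvelope}$). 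Thus $u^n$ is an isomorphism for $n\gg 0$, giving a $\mathcal{WI}$-pure Tate injective resolution of $M$.

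For the "if" direction, suppose $M$ has a $\mathcal{WI}$-pure Tate injective resolution $\xymatrix@C=0.5cm{M \ar[r] & \textbf{E} \ar[r]^{u} & \textbf{T}}$, with $u^n$ an isomorphism for all $n\geq N$. Since the $N$-th cosyzygy of $\textbf{E}$ agrees with the $N$-th cosyzygy of $\textbf{T}$, and every cokernel of a $\mathcal{WI}$-pure exact complex of injectives is Gorenstein weak injective (Remark~$\ref{1-4}$(4) together with the observation recorded after Definition~$\ref{def11}$), the $N$-th cosyzygy $C^N$ of $M$ is Gorenstein weak injective. Hence $0\to M\to I^0\to\cdots\to I^{N-1}\to C^N\to 0$ exhibits a coresolution of $M$ of length $N$ by Gorenstein weak injective modules (the $I^i$ are injective, hence Gorenstein weak injective by Remark~$\ref{1-4}$(1)), so $Gwid_R(M)\leq N<\infty$.

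The main obstacle is the bookkeeping in the "only if" direction: one must check that the spliced complex $\textbf{E}$ really is a genuine deleted injective resolution of $M$ (exactness at the splicing degree $n$, which holds because $C^n=\operatorname{Coker}(T_1\to T_0)$ and $\textbf{T}$ is exact), and that the comparison morphism $u$ can be chosen compatibly so that it is the identity for $n\gg 0$ while still being a chain map in the low degrees; this uses the injectivity of the terms of $\textbf{T}$ to lift, exactly as in the construction of the mapping cone in Proposition~$\ref{Gorenstein weak injective preenvelope}$. Everything else is routine dimension shifting.
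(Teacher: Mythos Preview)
Your proof is correct and follows essentially the same approach as the paper's: for the forward direction you take an injective resolution, use Proposition~\ref{Gweakdim} to see the $n$-th cosyzygy is Gorenstein weak injective, splice with the associated $\mathcal{WI}$-pure exact complex of injectives, and lift the comparison map using that $\operatorname{Hom}_R(I^i,\textbf{T})$ is exact (exactly as in Proposition~\ref{Gorenstein weak injective preenvelope}); for the converse you read off that a high cosyzygy is Gorenstein weak injective from the isomorphism with a kernel in $\textbf{T}$. The paper does the same, with the only cosmetic difference being that it assumes without loss of generality that $u$ is the identity in high degrees rather than arguing the cosyzygies agree via the isomorphisms $u^n$.
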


\begin{proof} Assume that $Gwid_R(M)=n<\infty$.  Consider an injective resolution of $M$: $0\rightarrow M\rightarrow I^0\rightarrow I^1\rightarrow\cdots$. Let $V^n=\mbox{Coker}(I^{n-2}\rightarrow I^{n-1})$. Then $V^n$ is Gorenstein weak injective, and hence we have the following commutative diagram:
$$
\xymatrix@=0.5cm{
0\ar[r]&N\ar[r]&I^0\ar[r]\ar@{.>}[dd]&I^1\ar[r]\ar@{.>}[dd]&\cdots\ar[r]&I^{n-1}\ar[rr]\ar@{.>}[dd]\ar[rd]&&E^n\ar[r]\ar@{=}[dd]&\cdots\\
&&&&&&V^n\ar[ru]\ar[rd]&&\\
\cdots\ar[r]&E^{-1}\ar[r]&E^0\ar[r]&E^1\ar[r]&\cdots\ar[r]&E^{n-1}\ar[rr]\ar[ru]&&E^n\ar[r]&\cdots
}
$$
where the bottom row is a $\mbox{Hom}_R(\mathcal{WI},-)$-exact exact sequence and each $E^i$ is injective. Thus this diagram is a $\mathcal{WI}$-pure  Tate injective
resolution of $M$.

Conversely, suppose that $M$ has a $\mathcal{WI}$-pure  Tate injective
resolution. Without loss of generality, we may assume that it is a diagram as shown in the above. Then $V^n$ is Gorenstein weak injective, and hence $Gwid_R(M)\leq n<\infty$.
\end{proof}

\begin{definition}\label{relative Tate cohomology} If an $R$-module $M$ has a $\mathcal{WI}$-pure  Tate injective
resolution $
  M \rightarrow \textbf{E} \rightarrow \textbf{T}$, then we define the relative Tate cohomology of $M$ with coefficient in an $R$-module $N$ as
$$\widehat{\mbox{Ext}}^i_{\mathcal{GWI}}(N,M)=\mbox{H}^i(\mbox{Hom}_R(N,\textbf{T})).$$
\end{definition}

We first claim that the above definition doesn't depend on the choice
of $\mathcal{WI}$-pure Tate injective resolutions of $M$. Indeed, assume that
$\xymatrix@C=0.5cm{
  M \ar[r] & \textbf{E} \ar[r]^{u} & \textbf{T}  }$ and $\xymatrix@C=0.5cm{
  M \ar[r] & \textbf{E}' \ar[r]^{v} & \textbf{T}'  }$ are two $\mathcal{WI}$-pure Tate injective resolutions of
  $M$ such that $u^{n'}$ is isomorphic for $n'\gg 0$ and $v^{n''}$ is isomorphic for $n''\gg 0$. Let
   $n=\mbox{max}\{n',n''\}$. If $i>n$, then
   $\mbox{H}^i(\mbox{Hom}_R(N,\textbf{T}))\cong\mbox{Ext}^i_R(N,M)\cong\mbox{H}^i(\mbox{Hom}_R(N,\textbf{T}'))$.
   If $i\leq n$, we consider an exact sequence $
     0 \rightarrow N \rightarrow W \rightarrow V^0 \rightarrow 0 $
     with $W$ a weak injective preenvelope of $N$, then we have the
     following exact sequence of complexes $$\xymatrix@C=0.5cm{
       0 \ar[r] & \mbox{Hom}_R(V^0,\textbf{T}) \ar[r]^{} & \mbox{Hom}_R(W,\textbf{T}) \ar[r]^{} & \mbox{Hom}_R(N,\textbf{T}) \ar[r] & 0
       },$$ which induces a long exact sequence of $R$-modules
\begin{multline*}
  \cdots \longrightarrow \operatorname{H}^i(\mbox{Hom}_R(W,\textbf{T})) \longrightarrow \operatorname{H}^i(\mbox{Hom}_R(N,\textbf{T})) \\
  \longrightarrow \operatorname{H}^{i+1}(\mbox{Hom}_R(V^0,\textbf{T}))  \longrightarrow
         \operatorname{H}^{i+1}(\mbox{Hom}_R(W,\textbf{T})) \longrightarrow \cdots .
\end{multline*}
By Definition $\ref{def11}$,
$\operatorname{H}^i(\mbox{Hom}_R(W,\textbf{T}))=0=\operatorname{H}^{i+1}(\mbox{Hom}_R(W,\textbf{T}))$, and hence
$\operatorname{H}^i(\mbox{Hom}_R(N,\textbf{T}))\cong
\operatorname{H}^{i+1}(\mbox{Hom}_R(V^0,\textbf{T})).$ Repeating this process, we may
find $V^j$ such that $\operatorname{H}^i(\mbox{Hom}_R(N,\textbf{T})) \cong
\operatorname{H}^{i+j+1}(\mbox{Hom}_R(V^j,\textbf{T})) \mbox{ and }i+j+1>n.$ Hence
$\operatorname{H}^i(\mbox{Hom}_R(N,\textbf{T}))\cong\mbox{Ext}^{i+j+1}_R(V^{j},M)$.
Similarly, we also have
$\operatorname{H}^i(\mbox{Hom}_R(N,\textbf{T}'))\cong\mbox{Ext}^{i+j+1}_R(V^{j},M)$.

\begin{proposition} Let $M$ be an $R$-module with $Gwid_R(M)<\infty$. For an exact sequence $\xymatrix@C=0.5cm{
  0 \ar[r] & A \ar[r]^{} & B \ar[r]^{} & C \ar[r] & 0 }$ of
  $R$-modules,  we have the following exact sequence
\begin{multline*}
  \cdots \longrightarrow\widehat{\operatorname{Ext}}^{i-1}_{\mathcal{GWI}}(A,M) \longrightarrow \widehat{\operatorname{Ext}}^i_{\mathcal{GWI}}(C,M) \longrightarrow \widehat{\operatorname{Ext}}^i_{\mathcal{GWI}}(B,M) \\
  \longrightarrow \widehat{\operatorname{Ext}}^i_{\mathcal{GWI}}(A,M) \longrightarrow \widehat{\operatorname{Ext}}^{i+1}_{\mathcal{GWI}}(C,M) \longrightarrow \cdots
\end{multline*}
for any $i\in \mathbb{Z}$.
\end{proposition}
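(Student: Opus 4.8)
The plan is to reduce the statement to the standard long exact cohomology sequence attached to a short exact sequence of cochain complexes. Since $Gwid_R(M)<\infty$, Lemma $\ref{lem3}$ supplies a $\mathcal{WI}$-pure Tate injective resolution $M\rightarrow \textbf{E}\xrightarrow{\,u\,}\textbf{T}$ of $M$, and by Definition $\ref{relative Tate cohomology}$ the groups $\widehat{\operatorname{Ext}}^i_{\mathcal{GWI}}(-,M)$ are computed as $\operatorname{H}^i(\operatorname{Hom}_R(-,\textbf{T}))$; the independence of this recipe from the chosen resolution has already been verified in the discussion following Definition $\ref{relative Tate cohomology}$, so it is harmless to fix one such $\textbf{T}$ once and for all.

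First I would apply the contravariant functor $\operatorname{Hom}_R(-,\textbf{T})$ to the given short exact sequence $0\rightarrow A\rightarrow B\rightarrow C\rightarrow 0$. Writing $\textbf{T}$ degreewise as a complex of injective $R$-modules $(T^k)_{k\in\mathbb{Z}}$, for each $k$ the left exactness of $\operatorname{Hom}_R(-,T^k)$ together with the injectivity of $T^k$ (which makes $\operatorname{Hom}_R(-,T^k)$ send epimorphisms to epimorphisms) shows that $0\rightarrow \operatorname{Hom}_R(C,T^k)\rightarrow \operatorname{Hom}_R(B,T^k)\rightarrow \operatorname{Hom}_R(A,T^k)\rightarrow 0$ is exact. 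Since this holds in every degree, the induced sequence of cochain complexes
\[0\rightarrow \operatorname{Hom}_R(C,\textbf{T})\rightarrow \operatorname{Hom}_R(B,\textbf{T})\rightarrow \operatorname{Hom}_R(A,\textbf{T})\rightarrow 0\]
is short exact.

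Next I would invoke the long exact sequence in cohomology associated with this short exact sequence of complexes. By Definition $\ref{relative Tate cohomology}$ its terms are precisely $\widehat{\operatorname{Ext}}^i_{\mathcal{GWI}}(C,M)$, $\widehat{\operatorname{Ext}}^i_{\mathcal{GWI}}(B,M)$ and $\widehat{\operatorname{Ext}}^i_{\mathcal{GWI}}(A,M)$, and — keeping track of the variance, so that $C$ lands in the left slot and $A$ in the right slot — the connecting homomorphism has the form $\widehat{\operatorname{Ext}}^i_{\mathcal{GWI}}(A,M)\rightarrow \widehat{\operatorname{Ext}}^{i+1}_{\mathcal{GWI}}(C,M)$. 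This yields exactly the asserted exact sequence for all $i\in\mathbb{Z}$, and the naturality of the connecting maps is inherited from that of the snake lemma.

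There is no serious obstacle here: the one point that genuinely needs to be recorded is that every term of $\textbf{T}$ is injective, so that $\operatorname{Hom}_R(-,\textbf{T})$ carries a short exact sequence of modules to a short exact sequence of complexes (degreewise exactness); once that is in place the result is just the standard long exact sequence in cohomology, and the only care required is bookkeeping of the indices and of the contravariance.
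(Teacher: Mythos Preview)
Your proof is correct and follows essentially the same approach as the paper: both use Lemma~\ref{lem3} to obtain a $\mathcal{WI}$-pure Tate injective resolution $M\rightarrow\textbf{E}\rightarrow\textbf{T}$, observe that the injectivity of each term of $\textbf{T}$ makes $0\rightarrow\operatorname{Hom}_R(C,\textbf{T})\rightarrow\operatorname{Hom}_R(B,\textbf{T})\rightarrow\operatorname{Hom}_R(A,\textbf{T})\rightarrow 0$ a short exact sequence of complexes, and then read off the long exact sequence in cohomology. Your version is slightly more detailed in spelling out the degreewise exactness and the variance bookkeeping, but the argument is the same.
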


\begin{proof} By Lemma $\ref{lem3}$, $M$ has a  $\mathcal{WI}$-pure Tate injective resolution
$\xymatrix@C=0.5cm{
  M \ar[r] & \textbf{E} \ar[r]^{u} & \textbf{T}  }$. Since each term of $\textbf{T}$ is injective,  we
have the following exact sequence of complexes
$
  0 \rightarrow \operatorname{Hom}_R(C,\textbf{T}) \rightarrow \operatorname{Hom}_R(B,\textbf{T}) \rightarrow \operatorname{Hom}_R(A,\textbf{T}) \rightarrow 0 ,$
 which induces a long exact sequence
\begin{multline*}
  \cdots \longrightarrow \mbox{H}^{i-1}(\mbox{Hom}_R(A,\textbf{T}))\longrightarrow \mbox{H}^i(\mbox{Hom}_R(C,\textbf{T})) \longrightarrow \mbox{H}^i(\mbox{Hom}_R(B,\textbf{T}))\\
  \longrightarrow\mbox{H}^i(\mbox{Hom}_R(A,\textbf{T})) \longrightarrow \mbox{H}^{i+1}(\mbox{Hom}_R(C,\textbf{T})) \longrightarrow \cdots,
\end{multline*}
as required.\end{proof}

The following theorem shows the case of vanishing of relative Tate
cohomology defined as in Definition $\ref{relative Tate cohomology}$.

\begin{theorem}\label{thm3.6} Let $M$ be an $R$-module with $Gwid_R(M)=n<\infty$. The following are
equivalent:

$(1)$ $id_R(M)\leq n$;

$(2)$ $id_R(M)<\infty$;

$(3)$ $\widehat{\operatorname{Ext}}^i_{\mathcal{GWI}}(N,M)=0$ for any $R$-module $N$ and any
$i\in \mathbb{Z}$;

$(4)$ $\widehat{\operatorname{Ext}}^i_{\mathcal{GWI}}(R/I,M)=0$ for any left ideal $I$ of
$R$ and any $i\in \mathbb{Z}$.\end{theorem}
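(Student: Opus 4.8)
The plan is to prove the cycle of implications $(1)\Rightarrow(2)\Rightarrow(3)\Rightarrow(4)\Rightarrow(1)$, since $(1)\Rightarrow(2)$ is trivial. The heart of the matter is showing that the relative Tate cohomology vanishes precisely when the ordinary injective dimension is finite, and the main tool throughout is the independence of $\widehat{\operatorname{Ext}}$ from the choice of $\mathcal{WI}$-pure Tate injective resolution established right before the statement.

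For $(2)\Rightarrow(3)$, I would use the observation already recorded in the text: if $id_R(M)<\infty$, then the zero complex is a $\mathcal{WI}$-pure Tate injective resolution of $M$ (take $\textbf{E}$ to be a finite-length deleted injective resolution and $\textbf{T}=0$, with $u^n$ an isomorphism for $n\gg0$ since both sides are eventually zero). Since the definition of $\widehat{\operatorname{Ext}}^i_{\mathcal{GWI}}(N,M)=\operatorname{H}^i(\operatorname{Hom}_R(N,\textbf{T}))$ does not depend on the choice of Tate injective resolution, we may compute it with $\textbf{T}=0$, giving $\widehat{\operatorname{Ext}}^i_{\mathcal{GWI}}(N,M)=0$ for all $N$ and all $i\in\mathbb{Z}$. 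The implication $(3)\Rightarrow(4)$ is immediate by specializing $N=R/I$.

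The substantive implication is $(4)\Rightarrow(1)$. Here I would fix a $\mathcal{WI}$-pure Tate injective resolution $M\to\textbf{E}\to\textbf{T}$ with $u^m$ an isomorphism for $m\gg0$, and argue that $\textbf{T}$ is in fact contractible (equivalently, that the Gorenstein weak injective cycle module appearing in $\textbf{T}$ is injective), which by Proposition \ref{either or} combined with $Gwid_R(M)=n$ forces $id_R(M)\le n$. Concretely, set $D^k=\operatorname{Coker}(T^{k-1}\to T^k)$; each $D^k$ is Gorenstein weak injective, and for $k\gg0$ we have $D^k\cong\operatorname{Coker}(I^{k-1}\to I^k)$ where the $I^j$ come from an honest injective resolution of $M$, so that $id_R(M)\le n$ would follow once we know one such high cycle module $V^n$ is injective. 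Using the dimension-shifting argument from the independence proof, $\widehat{\operatorname{Ext}}^i_{\mathcal{GWI}}(R/I,M)$ computes $\operatorname{Ext}^{i+j+1}_R(V^j,M)$ for suitable $j$; thus vanishing of all $\widehat{\operatorname{Ext}}^i_{\mathcal{GWI}}(R/I,M)$ gives $\operatorname{Ext}^t_R(R/I,M)=0$ for all left ideals $I$ and all $t>n$. By Baer's criterion applied to the $(n+1)$-st cosyzygy $V^{n+1}$ in an injective resolution of $M$ — more precisely, $\operatorname{Ext}^1_R(R/I,V^n)\cong\operatorname{Ext}^{n+1}_R(R/I,M)=0$ for every $I$ shows $V^n$ is injective — we conclude $id_R(M)\le n$.

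The main obstacle I anticipate is the bookkeeping in $(4)\Rightarrow(1)$: carefully relating the cycle modules of the $\mathcal{WI}$-pure exact complex $\textbf{T}$ to the cosyzygies of a genuine injective resolution of $M$ in the range where $u$ is an isomorphism, and then running the dimension-shifting identity $\widehat{\operatorname{Ext}}^i_{\mathcal{GWI}}(R/I,M)\cong\operatorname{Ext}^{i+j+1}_R(V^j,M)$ far enough to land strictly above $n$. One must check that the short exact sequences $0\to N\to W\to V^0\to 0$ used in the shift can be taken with $N=R/I$ and that weak injective preenvelopes are compatible with the argument; this is essentially the content already worked out in the paragraph verifying well-definedness of $\widehat{\operatorname{Ext}}$, so I would cite that computation rather than repeat it. Once the reduction to "$\operatorname{Ext}^{n+1}_R(R/I,M)=0$ for all $I$" is in place, Baer's criterion closes the argument cleanly.
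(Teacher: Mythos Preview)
Your treatment of $(2)\Rightarrow(3)\Rightarrow(4)$ is correct and matches the paper exactly. The issue is in $(4)\Rightarrow(1)$, where the key inference is mis-justified.

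You invoke ``the dimension-shifting argument from the independence proof'' to get $\widehat{\operatorname{Ext}}^i_{\mathcal{GWI}}(R/I,M)\cong\operatorname{Ext}^{i+j+1}_R(V^j,M)$ and then conclude $\operatorname{Ext}^t_R(R/I,M)=0$ for $t>n$. But the shift in the well-definedness paragraph is in the \emph{first} variable: one takes $0\to N\to W\to V^0\to 0$ with $W$ a weak injective preenvelope of $N$, so the modules $V^j$ appearing there are weak-injective cosyzygies of $N=R/I$, not of $M$. The resulting isomorphism therefore tells you $\operatorname{Ext}^{i+j+1}_R(V^j,M)=0$ for those particular $V^j$, which says nothing about $\operatorname{Ext}^t_R(R/I,M)$. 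You have conflated these $V^j$ with the injective cosyzygies $V^n$ of $M$ introduced two sentences earlier.

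The fix is simpler than what you wrote and needs no dimension shift at all. By the construction in Lemma~\ref{lem3} (using $Gwid_R(M)=n$), there is a $\mathcal{WI}$-pure Tate injective resolution $M\to\textbf{E}\to\textbf{T}$ with $u^m$ an isomorphism for every $m\ge n$. Hence for $i\ge n+1$ the complexes $\operatorname{Hom}_R(R/I,\textbf{T})$ and $\operatorname{Hom}_R(R/I,\textbf{E})$ agree in degrees $i-1,i,i+1$, so $\widehat{\operatorname{Ext}}^i_{\mathcal{GWI}}(R/I,M)\cong\operatorname{Ext}^i_R(R/I,M)$ directly. Now your Baer argument on the $n$-th cosyzygy $V^n$ goes through.

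For comparison, the paper proves $(4)\Rightarrow(1)$ by induction on $n$: the base case $n=0$ is precisely the observation above (for Gorenstein weak injective $M$ the Tate complex already is an injective resolution in nonnegative degrees, so $\operatorname{Ext}^1_R(R/I,M)\cong\widehat{\operatorname{Ext}}^1_{\mathcal{GWI}}(R/I,M)=0$), and the inductive step shifts in the \emph{second} variable via $0\to M\to E^0\to M'\to 0$, noting that $\textbf{T}[-1]$ is a Tate resolution of $M'$ so that $\widehat{\operatorname{Ext}}^i_{\mathcal{GWI}}(R/I,M')\cong\widehat{\operatorname{Ext}}^{i-1}_{\mathcal{GWI}}(R/I,M)=0$. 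Your corrected direct argument and the paper's induction are equivalent in content; the induction just hides the explicit choice of Tate resolution with $u^m$ iso for $m\ge n$.
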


\begin{proof} (1)$\Rightarrow$(2) and  (3)$\Rightarrow$(4) are
trivial.

(2)$\Rightarrow$(3). Since $id_R(M)<\infty$, we may take a  $\mathcal{WI}$-pure Tate
injective resolution of $M$ to be the zero complex, and thus
$\widehat{\mbox{Ext}}^i_{\mathcal{GWI}}(N,M)=0$ for any $N\in R$-Mod and any
$i\in \mathbb{Z}$.

(4)$\Rightarrow$(1). We use  induction on $n=Gwid_R(M)<\infty$. If $Gwid_R(M)=0$, then
$\mbox{Ext}^1_R(R/I,M)\cong\widehat{\mbox{Ext}}^1_{\mathcal{GWI}}(R/I,M)=0$ for
any left ideal $I$ of $R$, which implies that $M$ is injective, i.e.
$id_R(M)=0$. Now we assume that $Gwid_R(M)>0$, and let $\xymatrix@C=0.5cm{
  M \ar[r] & \textbf{E} \ar[r]^{u} & \textbf{T}  }$ be a $\mathcal{WI}$-pure Tate injective resolution of
  $M$ and $M'=\mbox{Coker}(M\rightarrow E^0)$. Then we have an exact sequence $
    0 \rightarrow M \rightarrow E^0 \rightarrow M' \rightarrow 0 $  with $E^0$ injective. Moreover,
     $Gwid_R(M')\leq n-1$ and  $\textbf{T}[-1]$ is a
  weak Tate injective resolution of $M'$. This implies that
  $\widehat{\mbox{Ext}}^{i}_{\mathcal{GWI}}(N,M')\cong\widehat{\mbox{Ext}}^{i-1}_{\mathcal{GWI}}(N,M)$
  for any $N\in R$-Mod and any $i\in \mathbb{Z}$. In particular,
  $\widehat{\mbox{Ext}}^{i}_{\mathcal{GWI}}(R/I,M')\cong\widehat{\mbox{Ext}}^{i-1}_{\mathcal{GWI}}(R/I,M)=0$
  for any left ideal $I$ of
$R$ and  any $i\in\mathbb{Z}$. This implies $id_R(M')\leq n-1$ by the
induction hypothesis, and hence $id_R(M)\leq n$.
\end{proof}

 We also have the following long exact
sequence with respect to the usual  cohomology, the Gorenstein weak cohomology and the relative
Tate cohomology, which is similar to that in \cite[Sec. 4]{Ia}:

\begin{lemma}\label{lem3.7} Let $M$ be an $R$-module with $Gwid_R(M)<\infty$.
Then we have a long exact sequence
$$\xymatrix@C=0.4cm{
  0 \ar[r] & \operatorname{Ext}^1_{\mathcal{GWI}}(N,M) \ar[r]^{} & \operatorname{Ext}^1_R(N,M) \ar[r]^{} & \widehat{\operatorname{Ext}}^{1}_{\mathcal{GWI}}(N,M) \ar[r]^{} & \operatorname{Ext}^2_{\mathcal{GWI}}(N,M) \ar[r] &
  \cdots }$$ for any $R$-module $N$.\end{lemma}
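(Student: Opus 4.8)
The plan is to realize each of the three Ext-theories as the cohomology of $\operatorname{Hom}_R(N,-)$ applied to a suitable complex, to link those complexes by chain maps, and then to read the asserted sequence off the long exact cohomology sequence of a degreewise split short exact sequence of complexes, in the spirit of \cite[Sec.\ 4]{Ia}.

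Write $n=Gwid_R(M)<\infty$. First I would fix the data: an injective resolution $\mathbf E$ of $M$, so that $\operatorname{Ext}^i_R(N,M)=\operatorname{H}^i\operatorname{Hom}_R(N,\mathbf E)$; a $\mathcal{WI}$-pure Tate injective resolution $M\to\mathbf E\xrightarrow{u}\mathbf T$, which exists by Lemma \ref{lem3}, chosen so that $u^i$ is an isomorphism for all $i\ge n$, whence $\widehat{\operatorname{Ext}}^i_{\mathcal{GWI}}(N,M)=\operatorname{H}^i\operatorname{Hom}_R(N,\mathbf T)$ and $u$ induces the comparison morphism $\operatorname{Ext}^i_R(N,M)\to\widehat{\operatorname{Ext}}^i_{\mathcal{GWI}}(N,M)$; and a proper right $\mathcal{GWI}$-resolution $\mathbf G$ of $M$ of length $n$. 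The last exists by the lemma recorded after Theorem \ref{balanced functor}; alternatively one builds it from the injective $\mathcal{GWI}$-preenvelope $0\to M\to G\to V\to 0$ of Proposition \ref{Gorenstein weak injective preenvelope} (with $id_R(V)=n-1$) spliced with an injective resolution of $V$, checking properness degreewise from the vanishing of $\operatorname{Ext}^{\ge 1}_R(-,X)$ on modules of finite injective dimension for $X\in\mathcal{GWI}$ — which follows from Lemma \ref{lemma1} and dimension shifting, since injective modules are weak injective. Then $\operatorname{Ext}^i_{\mathcal{GWI}}(N,M)=\operatorname{H}^i\operatorname{Hom}_R(N,\mathbf G)$, and the comparison chain map $\mathbf G\to\mathbf E$ lifting $\operatorname{id}_M$ — it exists because $\mathbf G$, being proper, is $\operatorname{Hom}_R(-,\mathcal I)$-exact while $\mathbf E$ is a coresolution by injectives — induces the comparison $\operatorname{Ext}^i_{\mathcal{GWI}}(N,M)\to\operatorname{Ext}^i_R(N,M)$.

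The main step is to assemble these into one short exact sequence of complexes. Applying $\operatorname{Hom}_R(N,-)$ to the degreewise split short exact sequence $0\to\mathbf T\to\operatorname{Cone}(u)\to\mathbf E[1]\to 0$ yields the long exact sequence
$$\cdots\to\widehat{\operatorname{Ext}}^i_{\mathcal{GWI}}(N,M)\to\operatorname{H}^i\operatorname{Hom}_R(N,\operatorname{Cone}(u))\to\operatorname{Ext}^{i+1}_R(N,M)\xrightarrow{\ u_*\ }\widehat{\operatorname{Ext}}^{i+1}_{\mathcal{GWI}}(N,M)\to\cdots,$$
so it remains to identify $\operatorname{H}^i\operatorname{Hom}_R(N,\operatorname{Cone}(u))$ with $\operatorname{Ext}^{i+1}_{\mathcal{GWI}}(N,M)$ compatibly with the comparison maps; re-indexing then produces exactly the stated sequence. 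Here the hypothesis that $u^i$ is an isomorphism for $i\ge n$ is decisive: the brutal truncation $\sigma_{\ge n}\operatorname{Cone}(u)$ is the cone of an isomorphism of complexes, hence contractible, so $\operatorname{Cone}(u)$ is homotopy equivalent to $\sigma_{<n}\operatorname{Cone}(u)$; and since $\operatorname{Cone}(u)[-1]$ has homology $M$ concentrated in degree $0$ and consists of injective (hence Gorenstein weak injective) modules, I would argue that after applying $\operatorname{Hom}_R(N,-)$ its cohomology matches the one computed from the proper resolution $\mathbf G$, invoking the comparison chain map $\mathbf G\to\mathbf E$ together with the already established comparisons between the absolute and Gorenstein-weak Ext-functors (the Proposition preceding this lemma, and the Proposition giving $\operatorname{Ext}^n_{\mathcal{GW}}(-,-)\cong\operatorname{Ext}^n_R(-,-)$ in the relevant range). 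The sequence begins at $i=1$ because $\operatorname{Hom}_R(N,\mathbf G)$ and $\operatorname{Hom}_R(N,\mathbf E)$ coincide in degree $0$, both equal to $\operatorname{Hom}_R(N,M)$, so the degree-$0$ Tate term drops out and the first map becomes injective; as a consistency check, for $i\ge n+1$ one has $\operatorname{Ext}^i_{\mathcal{GWI}}(N,M)=0$ (length-$n$ resolution) and $\operatorname{Ext}^i_R(N,M)\xrightarrow{\ \cong\ }\widehat{\operatorname{Ext}}^i_{\mathcal{GWI}}(N,M)$ (since $\mathbf T$ and $\mathbf E$ agree past degree $n$), so the sequence is trivially exact there.

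The main obstacle is precisely this identification $\operatorname{H}^i\operatorname{Hom}_R(N,\operatorname{Cone}(u))\cong\operatorname{Ext}^{i+1}_{\mathcal{GWI}}(N,M)$: Tate cohomology is manufactured from the doubly infinite complex $\mathbf T$, whereas $\operatorname{Ext}^\bullet_{\mathcal{GWI}}$ is computed from the bounded complex $\mathbf G$, so one must show that the "extra" doubly infinite injective part of $\operatorname{Cone}(u)$ does not contribute to $\operatorname{Hom}_R(N,-)$-cohomology and that what is left reconstructs, up to the relevant homotopy equivalence, a proper $\mathcal{GWI}$-resolution of $M$. Once that is in hand, the remainder is the routine long-exact-sequence bookkeeping for degreewise split short exact sequences of complexes, plus a naturality check that the comparison morphisms appearing are the intended ones.
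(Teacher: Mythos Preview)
The paper does not actually prove this lemma: it merely states the result and points to \cite[Sec.~4]{Ia} for the analogous construction. So your proposal is being compared against the Avramov--Martsinkovsky/Iacob template rather than an argument written out here.

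Your overall architecture---realize the three theories via complexes $\mathbf G$, $\mathbf E$, $\mathbf T$, form the mapping cone of $u\colon\mathbf E\to\mathbf T$, and read off the long exact sequence---is exactly that template. The difficulty you flag is real, and your proposed resolution of it does not work as written. The claim that $\operatorname{Cone}(u)[-1]$ ``has homology $M$ in degree $0$ and consists of injective (hence GWI) modules'' is true but does not imply that $\mathrm H^{i}\operatorname{Hom}_R(N,\operatorname{Cone}(u)[-1])\cong\operatorname{Ext}^{i}_{\mathcal{GWI}}(N,M)$: the complex is unbounded below (in negative degrees it is just the tail $\cdots\to T^{-2}\to T^{-1}$), and an acyclic complex of GWI modules augmented by $M$ computes $\operatorname{Ext}_{\mathcal{GWI}}$ only if it is \emph{proper}, i.e.\ $\operatorname{Hom}_R(-,\mathcal{GWI})$-exact. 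Neither the truncation argument nor the propositions you invoke (the one ``preceding this lemma'' concerns the first variable; the one giving $\operatorname{Ext}_{\mathcal{GW}}\cong\operatorname{Ext}_R$ requires $id_R(M)<\infty$ or $pd_R(N)<\infty$) supply this.

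The standard fix---and what \cite{Ia} and \cite{AM} actually do---is not to take an arbitrary cone and then identify, but to build $\mathbf T$ and $\mathbf G$ compatibly from the start. Concretely: take the special proper resolution $\mathbf G=(0\to M\to G\to I^1\to\cdots\to I^{n-1}\to 0)$ from Proposition~\ref{Gorenstein weak injective preenvelope}, with only $G=G^0$ genuinely GWI; choose a $\mathcal{WI}$-pure complete complex $\mathbf J$ of injectives with $G=Z^0(\mathbf J)$; then manufacture $\mathbf E$ and $\mathbf T$ so that the smart truncation $\tau^{\ge0}(\operatorname{Cone}(u)[-1])$ literally \emph{is} $\mathbf G$ (up to adding a contractible summand). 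Equivalently, one can bypass the cone altogether and run the long exact $\operatorname{Ext}_R(N,-)$-sequence on the preenvelope $0\to M\to G\to V\to 0$, using that $\operatorname{Ext}^{i}_R(N,V)\cong\operatorname{Ext}^{i+1}_{\mathcal{GWI}}(N,M)$ for $i\ge 1$ (since $\mathbf G$ is $G$ spliced with an injective resolution of $V$) and that $\operatorname{Ext}^{i}_R(N,G)\cong\widehat{\operatorname{Ext}}^{i}_{\mathcal{GWI}}(N,M)$ for $i\ge 1$ (since the complete complex for $G$ agrees with its injective resolution in non-negative degrees, and one checks $\widehat{\operatorname{Ext}}$ is insensitive to the passage from $M$ to $G$ along a sequence with $id_R(V)<\infty$). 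Either route closes the gap; the bookkeeping you describe for the start of the sequence and for degrees $\ge n+1$ is then correct.
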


Both this  and the following proposition show that the relative Tate cohomology
measures the distance between the cohomology and the Gorenstein weak
cohomology.

\begin{proposition}\label{prop3.8}  Let $M$ and $N$ be  $R$-modules with $Gwid_R(M)=n<\infty$.
If $id_R(M)<\infty$, then the natural transformation $\operatorname{Ext}^i_{\mathcal{GWI}}(N,M)
\rightarrow \operatorname{Ext}^i_R(N,M)$ is a natural isomorphism for any $0\leq
i\leq n$, and $\operatorname{Ext}^i_R(N,M)=0$ for any $i>n$.\end{proposition}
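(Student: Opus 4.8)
The plan is to reduce the statement to two results already available earlier in the paper: the equality of the injective dimension and the Gorenstein weak injective dimension of $M$ when $id_R(M)<\infty$, and the vanishing of the relative Tate groups $\widehat{\operatorname{Ext}}^i_{\mathcal{GWI}}(N,M)$ under the same hypothesis.

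First I would observe that $id_R(M)<\infty$ forces $wid_R(M)<\infty$: every injective module is weak injective, so an injective resolution of length $id_R(M)$ already witnesses $wid_R(M)\le id_R(M)$. The proposition asserting $Gwid_R(M)\le id_R(M)$, with equality when $wid_R(M)<\infty$, then gives $id_R(M)=Gwid_R(M)=n$. In particular $\operatorname{Ext}^i_R(N,M)=0$ for every $i>n$ and every $R$-module $N$, which is the second assertion of the proposition.

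Next, since $id_R(M)<\infty$ and $Gwid_R(M)=n<\infty$, Theorem~\ref{thm3.6} (the implication $(2)\Rightarrow(3)$) yields $\widehat{\operatorname{Ext}}^i_{\mathcal{GWI}}(N,M)=0$ for every $R$-module $N$ and every $i\in\mathbb{Z}$. Substituting this into the long exact sequence of Lemma~\ref{lem3.7}, all Tate terms vanish and the sequence collapses into isomorphisms $\operatorname{Ext}^i_{\mathcal{GWI}}(N,M)\cong\operatorname{Ext}^i_R(N,M)$ for all $i\ge 1$; for $i=0$ one identifies $\operatorname{Ext}^0_{\mathcal{GWI}}(N,M)$ with $\operatorname{Hom}_R(N,M)=\operatorname{Ext}^0_R(N,M)$ directly, using that a proper right $\mathcal{GWI}$-resolution $0\to M\to G^0\to G^1\to\cdots$ of $M$ is exact (injectives being Gorenstein weak injective) together with left exactness of $\operatorname{Hom}_R(N,-)$. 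Restricting to $0\le i\le n$ gives the first assertion, and all the isomorphisms are natural in $N$ and in $M$ because every map in Lemma~\ref{lem3.7} is.

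I do not anticipate a genuine obstacle; the only delicate point is to check that the natural transformation $\operatorname{Ext}^i_{\mathcal{GWI}}(N,M)\to\operatorname{Ext}^i_R(N,M)$ named in the statement coincides with the comparison morphism appearing in Lemma~\ref{lem3.7}, which holds because both are induced by lifting $\operatorname{id}_M$ to a chain map from a proper right $\mathcal{GWI}$-resolution of $M$ to an injective resolution of $M$. Alternatively, one can bypass the Tate machinery altogether: when $id_R(M)<\infty$, an injective resolution of $M$ is itself a proper right $\mathcal{GWI}$-resolution of $M$, so $\operatorname{Ext}^i_{\mathcal{GWI}}(N,M)$ may be computed from it, immediately giving the isomorphism with $\operatorname{Ext}^i_R(N,M)$ in all degrees, and the vanishing for $i>n$ is then read off from $id_R(M)=n$.
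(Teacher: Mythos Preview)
Your argument is correct and follows the paper's route: use Theorem~\ref{thm3.6} to kill the Tate terms and then read off the isomorphisms from the long exact sequence of Lemma~\ref{lem3.7}, handling $i=0$ separately. The one difference is in the vanishing for $i>n$: you invoke the Section~2 equality $id_R(M)=Gwid_R(M)$ (valid since $wid_R(M)<\infty$) to get $id_R(M)=n$ directly, whereas the paper instead observes that $\operatorname{Ext}^i_{\mathcal{GWI}}(N,M)=0=\widehat{\operatorname{Ext}}^i_{\mathcal{GWI}}(N,M)$ for $i>n$ and squeezes $\operatorname{Ext}^i_R(N,M)$ between them in the same long exact sequence. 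Your alternative bypass (an injective resolution of $M$ is already a proper right $\mathcal{GWI}$-resolution when $id_R(M)<\infty$) is also valid and is exactly the argument the paper uses elsewhere when comparing $\operatorname{Ext}^n_{\mathcal{GWI}}$ with $\operatorname{Ext}^n_R$.
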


\begin{proof} If $0<
i\leq n$, then it follows immediately from Theorem $\ref{thm3.6}$ and Lemma
$\ref{lem3.7}$. Moreover, $\operatorname{Ext}^0_{\mathcal{GWI}}(N,M) \cong \operatorname{Hom}_R(N,M)\cong
\operatorname{Ext}^0_R(N,M)$. So the assertion  holds for $0\leq i\leq n$.
Furthermore, $\operatorname{Ext}^i_{\mathcal{GWI}}(N,M)=0=\widehat{\operatorname{Ext}}^{i}_{\mathcal{GWI}}(N,M)$
whenever $i>n$, which implies that $\operatorname{Ext}^i_R(N,M)=0$ for all $i>n$ by
the exact sequence of Lemma $\ref{lem3.7}$.\end{proof}

\begin{lemma} Let $M$ and $N$ be $R$-modules with $id_R(N)<\infty$ or $pd_R(N)<\infty$. If  $M$
admits a $\mathcal{WI}$-pure Tate injective resolution $\xymatrix@C=0.5cm{
  M \ar[r] & \operatorname{\bf E} \ar[r]^{u} & \operatorname{\bf T}  }$. Then  $\widehat{\operatorname{Ext}}^{i}_{\mathcal{GWI}}(N,M)=0$ for any $i\in
\mathbb{Z}$.\end{lemma}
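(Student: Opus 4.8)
The plan is to exploit the fact that a $\mathcal{WI}$-pure Tate injective resolution $\xymatrix@C=0.5cm{M \ar[r] & \operatorname{\bf E} \ar[r]^{u} & \operatorname{\bf T}}$ agrees with the injective resolution $\operatorname{\bf E}$ in high degrees, so that for $i \gg 0$ one has $\widehat{\operatorname{Ext}}^i_{\mathcal{GWI}}(N,M) \cong \operatorname{Ext}^i_R(N,M)$, and then to propagate the vanishing down to all degrees by a dimension-shifting argument. First I would dispose of the case $id_R(N) < \infty$: since $\operatorname{\bf T}$ is an exact complex of injective modules, for $i$ large enough that $u^j$ is an isomorphism for all $j \geq i$, we get $\widehat{\operatorname{Ext}}^i_{\mathcal{GWI}}(N,M) = \operatorname{H}^i(\operatorname{Hom}_R(N,\operatorname{\bf T})) \cong \operatorname{Ext}^i_R(N,M)$, and the latter is $0$ once $i > id_R(N)$. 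To push this down to arbitrary $i \in \mathbb{Z}$, I would take a finite injective resolution $0 \to N \to E^0 \to E^1 \to \cdots \to E^m \to 0$ and argue by induction on $id_R(N)$ (or equivalently by dimension shifting on the syzygies $V^j = \operatorname{Coker}(E^{j-2} \to E^{j-1})$): each short exact sequence $0 \to V^{j} \to E^{j} \to V^{j+1} \to 0$ with $E^j$ injective yields, since $\operatorname{\bf T}$ consists of injectives, a short exact sequence of complexes $0 \to \operatorname{Hom}_R(V^{j+1},\operatorname{\bf T}) \to \operatorname{Hom}_R(E^j,\operatorname{\bf T}) \to \operatorname{Hom}_R(V^j,\operatorname{\bf T}) \to 0$; but $E^j$ is injective, hence weak injective, so $\operatorname{Hom}_R(E^j,\operatorname{\bf T})$ is exact by Definition $\ref{def11}$, giving $\widehat{\operatorname{Ext}}^i_{\mathcal{GWI}}(V^j,M) \cong \widehat{\operatorname{Ext}}^{i+1}_{\mathcal{GWI}}(V^{j+1},M)$ for all $i$. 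Iterating, $\widehat{\operatorname{Ext}}^i_{\mathcal{GWI}}(N,M) \cong \widehat{\operatorname{Ext}}^{i+m+1}_{\mathcal{GWI}}(V^m,M)$ where $V^m$ is injective (in fact $0$), and by choosing things so that $i + m + 1$ is large, the right-hand side is an honest $\operatorname{Ext}_R$-group of an injective module, which vanishes.

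For the case $pd_R(N) < \infty$ the strategy is dual in flavor: I would take a finite projective resolution $0 \to P_m \to \cdots \to P_1 \to P_0 \to N \to 0$ and use dimension shifting on the kernels $K_j$. Each short exact sequence $0 \to K_{j+1} \to P_j \to K_j \to 0$ with $P_j$ projective induces, upon applying $\operatorname{Hom}_R(-,\operatorname{\bf T})$, a short exact sequence of complexes $0 \to \operatorname{Hom}_R(K_j,\operatorname{\bf T}) \to \operatorname{Hom}_R(P_j,\operatorname{\bf T}) \to \operatorname{Hom}_R(K_{j+1},\operatorname{\bf T}) \to 0$, and $\operatorname{Hom}_R(P_j,\operatorname{\bf T})$ is exact because $P_j$ is projective (so the functor $\operatorname{Hom}_R(P_j,-)$ preserves exactness of the exact complex $\operatorname{\bf T}$). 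This yields $\widehat{\operatorname{Ext}}^i_{\mathcal{GWI}}(N,M) \cong \widehat{\operatorname{Ext}}^i_{\mathcal{GWI}}(K_{m+1},M)$ for all $i$, but $K_{m+1}$ (or $K_m$) is projective, and for a projective module $P$ the complex $\operatorname{Hom}_R(P,\operatorname{\bf T})$ is exact, so $\widehat{\operatorname{Ext}}^i_{\mathcal{GWI}}(P,M) = 0$ for all $i$. Hence $\widehat{\operatorname{Ext}}^i_{\mathcal{GWI}}(N,M) = 0$ for all $i \in \mathbb{Z}$.

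The main obstacle, and the point deserving the most care, is the bookkeeping that reconciles the ``high-degree agreement with $\operatorname{Ext}_R$'' with the fact that $\widehat{\operatorname{Ext}}$ is defined for all integers $i$, including negative ones: one must make sure that the dimension-shifting isomorphisms can be iterated enough times to land in the degree range where $u$ is an isomorphism (for the injective-dimension case) or to reduce to a projective argument (for the projective-dimension case), and that the resulting groups are the ones already known to vanish. This is exactly the mechanism already used in the excerpt to prove that $\widehat{\operatorname{Ext}}$ is independent of the chosen $\mathcal{WI}$-pure Tate injective resolution, so I would model the argument closely on that computation. A minor secondary point is to note that in the $pd_R(N) < \infty$ branch one does not even need the hypothesis $Gwid_R(M) < \infty$ beyond the existence of the given Tate resolution, and that the two cases are genuinely independent — one cannot in general assume both finiteness conditions simultaneously.
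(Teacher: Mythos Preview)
Your proposal is correct and uses essentially the same mechanism as the paper: induction/dimension-shifting on a finite injective (resp.\ projective) resolution of $N$, together with the fact that $\operatorname{Hom}_R(E,\mathbf{T})$ is exact whenever $E$ is injective (hence weak injective) or projective. The paper frames this as a direct induction showing that $\operatorname{Hom}_R(N,\mathbf{T})$ is exact, which is slightly cleaner than your version: your opening appeal to ``high-degree agreement with $\operatorname{Ext}_R$'' is unnecessary, since once you have shifted down to $V^m$ injective (resp.\ $K_m$ projective) the vanishing $\widehat{\operatorname{Ext}}^j_{\mathcal{GWI}}(V^m,M)=0$ holds for \emph{all} $j$, not just large ones---so you never need to arrange that $i+m+1$ lands in the range where $u$ is an isomorphism.
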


\begin{proof}
We only prove the case $id_R(N)<\infty$, the proof of the case $pd_R(N)<\infty$ is similar.
To end it, it suffices to prove that
the complex $\mbox{Hom}_R(N,\textbf{T})$ is exact by Definition $\ref{relative Tate cohomology}$.

We use  induction on $n=id_R(N)<\infty$. If $n=0$, then
$\mbox{Hom}_R(N,\textbf{T})$ is exact. Now we assume that $n>0$, and
consider an exact sequence $\xymatrix@C=0.5cm{
  0 \ar[r] & N \ar[r]^{} & E \ar[r]^{} & N' \ar[r] & 0 }$ with $E$
  injective and thus $id_R(N')=n-1$. Then we have the following exact
  sequence of complexes $$\xymatrix@C=0.5cm{
    0 \ar[r] & \mbox{Hom}_R(N',\textbf{T}) \ar[r]^{} & \mbox{Hom}_R(E,\textbf{T}) \ar[r]^{} & \mbox{Hom}_R(N,\textbf{T}) \ar[r] & 0
    }.$$ Note that the complex $\mbox{Hom}_R(E,\textbf{T})$ is exact and the complex $\mbox{Hom}_R(N',\textbf{T})$
    is also exact by the induction hypothesis, which implies that the complex
    $\mbox{Hom}_R(N,\textbf{T})$ is exact, as desired.\end{proof}

By this lemma, we can refine Proposition $\ref{prop3.8}$ as follows.

\begin{proposition} Let $M$ and $N$ be  $R$-modules with $Gwid_R(M)=n<\infty$.
If $id_R(M)<\infty$ or $id_R(N)<\infty$, then the natural transformation
$\operatorname{Ext}^i_{\mathcal{GWI}}(N,M) \rightarrow \operatorname{Ext}^i_R(N,M)$ is a natural isomorphism
for any $0\leq i\leq n$, and $\operatorname{Ext}^i_R(N,M)=0$ for any
$i>n$.\end{proposition}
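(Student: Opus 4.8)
The plan is to reduce the statement to results already established. Its hypothesis is a disjunction, and the first alternative, $id_R(M)<\infty$, is exactly Proposition $\ref{prop3.8}$; so I would only need to handle the case $id_R(N)<\infty$ (when both hold, either argument applies).

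First I would use Lemma $\ref{lem3}$: since $Gwid_R(M)=n<\infty$, the module $M$ admits a $\mathcal{WI}$-pure Tate injective resolution $\xymatrix@C=0.4cm{M\ar[r]&\textbf{E}\ar[r]&\textbf{T}}$, so $\widehat{\operatorname{Ext}}^i_{\mathcal{GWI}}(N,M)=\operatorname{H}^i(\operatorname{Hom}_R(N,\textbf{T}))$ is well defined (and, as recorded after Definition $\ref{relative Tate cohomology}$, independent of the chosen resolution). Then I would invoke the lemma immediately preceding this proposition: since $id_R(N)<\infty$, it yields $\widehat{\operatorname{Ext}}^i_{\mathcal{GWI}}(N,M)=0$ for every $i\in\mathbb{Z}$.

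Next I would feed this vanishing into the long exact sequence of Lemma $\ref{lem3.7}$,
$$\cdots\to\operatorname{Ext}^i_{\mathcal{GWI}}(N,M)\to\operatorname{Ext}^i_R(N,M)\to\widehat{\operatorname{Ext}}^{i}_{\mathcal{GWI}}(N,M)\to\operatorname{Ext}^{i+1}_{\mathcal{GWI}}(N,M)\to\cdots,$$
in which the canonical comparison map $\operatorname{Ext}^i_{\mathcal{GWI}}(N,M)\to\operatorname{Ext}^i_R(N,M)$ appears. With every $\widehat{\operatorname{Ext}}$-term equal to zero, exactness forces this map to be an isomorphism for each $i\geq 1$; for $i=0$ one has $\operatorname{Ext}^0_{\mathcal{GWI}}(N,M)\cong\operatorname{Hom}_R(N,M)\cong\operatorname{Ext}^0_R(N,M)$ directly, and functoriality in $M$ and $N$ is inherited from that of the maps in Lemma $\ref{lem3.7}$. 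Finally, because $Gwid_R(M)=n$, the module $M$ has a proper right $\mathcal{GWI}$-resolution of length $n$ (the existence lemma for proper $\mathcal{GWI}$-resolutions stated at the start of Section~4), whence $\operatorname{Ext}^i_{\mathcal{GWI}}(N,M)=0$ for all $i>n$; combined with the isomorphisms above this gives $\operatorname{Ext}^i_R(N,M)=0$ for all $i>n$.

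I do not expect a genuine obstacle here: the argument is the same bookkeeping as in the proof of Proposition $\ref{prop3.8}$, with the preceding lemma supplying the vanishing of $\widehat{\operatorname{Ext}}$ that there came from $id_R(M)<\infty$ (and ultimately from Theorem $\ref{thm3.6}$). The only points deserving a moment's care are checking that the comparison map in Lemma $\ref{lem3.7}$ really is the natural transformation named in the statement, and that the vanishing of $\widehat{\operatorname{Ext}}^i_{\mathcal{GWI}}(N,M)$ is genuinely independent of the chosen $\mathcal{WI}$-pure Tate injective resolution of $M$; both are already on record in the text, so nothing new is needed.
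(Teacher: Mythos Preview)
Your proposal is correct and follows essentially the same approach as the paper, which leaves the proof implicit by saying only ``By this lemma, we can refine Proposition $\ref{prop3.8}$ as follows.'' You have simply spelled out what the paper intends: the case $id_R(M)<\infty$ is Proposition $\ref{prop3.8}$ verbatim, and the case $id_R(N)<\infty$ uses the preceding lemma to obtain $\widehat{\operatorname{Ext}}^i_{\mathcal{GWI}}(N,M)=0$ and then repeats the bookkeeping of Proposition $\ref{prop3.8}$ with Lemma $\ref{lem3.7}$.
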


Similar to Definitions $\ref{def11}$, $\ref{def3.2}$ and $\ref{relative Tate cohomology}$, we give the following definitions.

\begin{definition}\label{def311} A \emph{$\mathcal{WF}$-copure exact complex of projective
$R$-modules} is an exact complex of projective
$R$-modules
$$\textbf{P}= \ \     \xymatrix@C=0.5cm{
  \cdots \ar[r] & P_1 \ar[r]^{d_1} & P_0 \ar[r]^{d_0} & P^0 \ar[r]^{d^0} & P^1 \ar[r]^{d^1} & \cdots }$$
  such that the complex $\operatorname{Hom}_R(\textbf{P},W)$ is exact for any
weak flat  $R$-module $W$.\end{definition}

Note that $M\in R$-Mod is Gorenstein weak projective if and only if there is a
$\mathcal{WF}$-copure exact complex $\textbf{P}$ of projective  $R$-modules
such that $M\cong \mbox{Coker}(P_1\rightarrow P_0)$. Moreover, if
there is a $\mathcal{WF}$-copure exact complex $\textbf{P}$ of projective  $R$-modules, then each kernel, cokernel and image in $\textbf{P}$
are Gorenstein weak projective.

\begin{definition}\label{def33.2} Let $M$ be an $R$-module. A \emph{$\mathcal{WF}$-copure  Tate projective
resolution} of $M$ is a diagram $\xymatrix@C=0.5cm{
 \textbf{T} \ar[r] & \textbf{P} \ar[r]^{u} &M }$, where $\textbf{P}$ is a deleted projective
  resolution of $M$ and $\textbf{T}$ is a $\mathcal{WF}$-copure exact  complex of
  projective
$R$-modules and $u$ is a morphism of complexes  such that $u_n$ is isomorphic for  $n\gg 0$.\end{definition}

 For example, if $M\in R$-Mod with $pd_R(M)<\infty$, then the zero
 complex is a $\mathcal{WF}$-copure  Tate projective resolution of $M$, and  if $M\in R$-Mod is a Gorenstein weak
 projective module such that there is a $\operatorname{Hom}_R(-,\mathcal{WF})$-exact exact complex $\textbf{P}=
   \cdots \rightarrow P_1 \rightarrow P_0 \rightarrow P^{0}  \rightarrow \cdots $
 and $M\cong \mbox{Coker}(P_1\rightarrow
   P_0)$, then $\textbf{P}$ is a $\mathcal{WF}$-copure  Tate projective resolution
   of $M$.

\begin{lemma}\label{lem33} Let $M$ be an $R$-module. Then
$Gwpd_R(M)<\infty$ if and only if $M$ has a $\mathcal{WF}$-copure  Tate projective
resolution.\end{lemma}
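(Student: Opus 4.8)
The plan is to mirror the proof of Lemma~\ref{lem3}, transporting every statement about injective modules, injective resolutions, weak injective modules, and Gorenstein weak injective dimension to the dual setting of projective modules, projective resolutions, weak flat modules, and Gorenstein weak projective dimension. The two directions are: if $Gwpd_R(M)<\infty$ then $M$ has a $\mathcal{WF}$-copure Tate projective resolution, and conversely such a resolution forces $Gwpd_R(M)<\infty$.

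First I would prove the forward direction. Assume $Gwpd_R(M)=n<\infty$ and take a deleted projective resolution $\cdots\rightarrow P_1\rightarrow P_0\rightarrow M\rightarrow 0$ of $M$. Let $K_n=\operatorname{Ker}(P_{n-1}\rightarrow P_{n-2})$ (with the usual conventions for small $n$). By the dual of Proposition~\ref{Gweakdim} (the Gorenstein weak projective analogue of parts (4)--(5)), $K_n$ is Gorenstein weak projective, so by Definition~\ref{Gorenstein weak projective} there is a $\operatorname{Hom}_R(-,\mathcal{WF})$-exact exact complex $\textbf{T}$ of projective $R$-modules with $K_n\cong\operatorname{Coker}$ of one of its maps. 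Splicing $\textbf{T}$ onto the projective resolution of $M$ at the spot $K_n$ and using the comparison theorem to produce a morphism of complexes $u\colon\textbf{T}\rightarrow\textbf{P}$ (where $\textbf{P}$ is the deleted projective resolution), one obtains a diagram $\textbf{T}\rightarrow\textbf{P}\rightarrow M$ in which $u_m$ is an isomorphism for $m\geq n$; this is precisely a $\mathcal{WF}$-copure Tate projective resolution of $M$. This is the exact dual of the commutative-diagram construction in Lemma~\ref{lem3}.

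Conversely, suppose $M$ admits a $\mathcal{WF}$-copure Tate projective resolution $\textbf{T}\rightarrow\textbf{P}\rightarrow M$, with $u_m$ an isomorphism for $m\gg 0$, say for $m\geq n$. Since $\textbf{T}$ is a $\mathcal{WF}$-copure exact complex of projectives, every kernel (equivalently image) occurring in $\textbf{T}$ is Gorenstein weak projective by the remark following Definition~\ref{def311}. Because $u_m$ is an isomorphism for $m\geq n$, the $n$th syzygy of $M$ computed from the projective resolution $\textbf{P}$ coincides (up to isomorphism) with the corresponding kernel in $\textbf{T}$, hence is Gorenstein weak projective. Therefore $Gwpd_R(M)\leq n<\infty$.

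The only point requiring a little care — the "main obstacle," though it is routine — is matching the indices so that the syzygy of $M$ that we prove to be Gorenstein weak projective is genuinely the one sitting at the place where $u$ becomes an isomorphism; this is handled exactly as in the diagram of Lemma~\ref{lem3}, reading all arrows in the opposite direction and replacing $\mathcal{WI}$ by $\mathcal{WF}$, injectives by projectives, and $Gwid_R$ by $Gwpd_R$. Since the paper has already announced that the results of Section~3 are dual to those of Section~2 and are stated without proof, it would be consistent to simply record that the proof is dual to that of Lemma~\ref{lem3}.
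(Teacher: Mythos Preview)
Your proposal is correct and matches the paper's approach: the paper gives no proof for Lemma~\ref{lem33} at all, leaving it as the evident dual of Lemma~\ref{lem3}, which is precisely what you carry out in detail. Your final remark that one could simply record the duality is exactly what the paper does.
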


\begin{definition}\label{3relative Tate cohomology} If $M\in R\operatorname{-Mod}$ has a $\mathcal{WF}$-copure  Tate projective
resolution $
  \textbf{T} \rightarrow \textbf{P} \rightarrow M$, then we define the relative Tate cohomology of $M$ with coefficient in an $R$-module $N$ as
$$\widehat{\mbox{Ext}}^i_{\mathcal{GWP}}(M,N)=\mbox{H}^i(\mbox{Hom}_R(\textbf{T},N)).$$
\end{definition}

As a similar argument in the above, we may show that this definition doesn't depend on the choice
of $\mathcal{WF}$-copure Tate projective resolutions of $M$.

It is well-known that  $\mbox{Ext}^n_R(M,N)$ can be compute by a projective resolution of $M$ or a injective resolution of $N$. It is natural to ask that whether $\widehat{\mbox{Ext}}^n_{\mathcal{GWP}}(M,N)=\widehat{\mbox{Ext}}^n_{\mathcal{GWI}}(M,N)$ hold or not? The following theorem gives an affirmative answer.

\begin{theorem}\label{Tate balance}
Let $M$ and $N$ be $R$-modules with $Gwpd_R(M)<\infty$ and $Gwid_R(M)<\infty$. Then $\widehat{\operatorname{Ext}}^i_{\mathcal{GWP}}(M,N)=\widehat{\operatorname{Ext}}^i_{\mathcal{GWI}}(M,N)$ for any $i\in
\mathbb{Z}$.
\end{theorem}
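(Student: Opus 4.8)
The plan is to reduce the statement to a single comparison lemma that parallels the proof of Theorem \ref{balanced functor}, by exhibiting a suitable double-complex whose two spectral sequences (or, more elementarily, whose two edge comparisons) compute the two sides. First I would invoke Lemma \ref{lem3} and Lemma \ref{lem33}: since $Gwpd_R(M)<\infty$ and $Gwid_R(N)<\infty$ — I will assume this is the intended reading of the hypothesis ``$Gwid_R(M)<\infty$'' in the statement — the module $M$ has a $\mathcal{WF}$-copure Tate projective resolution $\textbf{T}_P\rightarrow \textbf{P}\rightarrow M$ and $N$ has a $\mathcal{WI}$-pure Tate injective resolution $N\rightarrow \textbf{E}\rightarrow \textbf{T}_I$, so that both $\widehat{\operatorname{Ext}}^i_{\mathcal{GWP}}(M,N)=\mbox{H}^i(\mbox{Hom}_R(\textbf{T}_P,N))$ and $\widehat{\operatorname{Ext}}^i_{\mathcal{GWI}}(M,N)=\mbox{H}^i(\mbox{Hom}_R(M,\textbf{T}_I))$ are defined.

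The key step is then to show each of these agrees, for $i\gg 0$ and hence (by dimension shifting along the $\mathcal{WI}$-pure and $\mathcal{WF}$-copure parts) for all $i\in\mathbb{Z}$, with a common absolute $\operatorname{Ext}$-group. Concretely, since $u_n\colon P_n\xrightarrow{\sim} (\textbf{T}_P)_n$ and $u^n\colon E^n\xrightarrow{\sim}(\textbf{T}_I)^n$ for $n$ large, one has $\mbox{H}^i(\mbox{Hom}_R(\textbf{T}_P,N))\cong \operatorname{Ext}^i_R(M,N)$ for $i$ large, and likewise $\mbox{H}^i(\mbox{Hom}_R(M,\textbf{T}_I))\cong \operatorname{Ext}^i_R(M,N)$ for $i$ large — this is exactly the argument already used in the excerpt to show well-definedness of the Tate functors. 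For small $i$ I would run the standard descent: pick a weak flat precover $0\rightarrow K\rightarrow F\rightarrow M\rightarrow 0$ with $F$ weak flat (which exists by the preliminaries), note that $\textbf{T}_P[1]$ is a $\mathcal{WF}$-copure Tate projective resolution of $K$, so $\widehat{\operatorname{Ext}}^i_{\mathcal{GWP}}(M,N)\cong\widehat{\operatorname{Ext}}^{i+1}_{\mathcal{GWP}}(K,N)$ because $\mbox{Hom}_R(\textbf{T}_P,N)$ is exact on weak flat modules; iterating raises the index past the stable range. Dually, using a weak injective preenvelope $0\rightarrow N\rightarrow W\rightarrow V^0\rightarrow 0$ one gets $\widehat{\operatorname{Ext}}^i_{\mathcal{GWI}}(M,N)\cong\widehat{\operatorname{Ext}}^{i+1}_{\mathcal{GWI}}(M,V^0)$. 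Matching the two sides then comes down to a single equality of the form $\widehat{\operatorname{Ext}}^i_{\mathcal{GWP}}(M',N')\cong\widehat{\operatorname{Ext}}^i_{\mathcal{GWI}}(M',N')$ for some shifted pair with one coordinate of finite projective or injective dimension, which is handled by the comparison isomorphisms already recorded (the analogue of the last proposition before Definition \ref{def311}) that identify both Tate groups with the vanishing group $0$ or with $\operatorname{Ext}^i_R$.

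The cleanest way to carry this out uniformly is probably to build the first-quadrant-plus-tail double complex $\mbox{Hom}_R(\textbf{T}_P,\textbf{T}_I)$ — or rather $\mbox{Hom}_R(\textbf{P}',\textbf{E}')$ suitably truncated and glued — and compare its two filtrations. Taking homology first in the injective direction, the rows $\mbox{Hom}_R(P_j,\textbf{T}_I)$ are exact because each $P_j$ is projective hence weak flat and $\textbf{T}_I$ is $\mathcal{WI}$-pure exact, so that spectral sequence collapses to $\mbox{H}^i(\mbox{Hom}_R(M,\textbf{T}_I))$ in the stable range; taking homology first in the projective direction, the columns $\mbox{Hom}_R(\textbf{T}_P,E^k)$ are exact because each $E^k$ is injective hence weak injective and $\textbf{T}_P$ is $\mathcal{WF}$-copure exact, collapsing to $\mbox{H}^i(\mbox{Hom}_R(\textbf{T}_P,N))$. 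The main obstacle will be bookkeeping: the Tate complexes are unbounded on one side and the ordinary resolutions on the other, so I must be careful that the double complex has exact rows and columns in all the relevant degrees and that the gluing along the large-$n$ isomorphisms $u_n,u^n$ is compatible, i.e. that the two collapses land on genuinely the same bidegrees. Once the spectral-sequence comparison (or the equivalent hands-on diagram chase using the exactness of $\mbox{Hom}_R(\mathcal{WF},-)$ on injectives and $\mbox{Hom}_R(-,\mathcal{WI})$ on projectives) is set up, the conclusion $\widehat{\operatorname{Ext}}^i_{\mathcal{GWP}}(M,N)=\widehat{\operatorname{Ext}}^i_{\mathcal{GWI}}(M,N)$ for all $i\in\mathbb{Z}$ is immediate, and functoriality in $M$ and $N$ follows from functoriality of all the resolutions and comparison maps involved.
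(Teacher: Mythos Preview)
Your overall strategy—identify both Tate groups with $\operatorname{Ext}^i_R(M,N)$ in high degrees and then dimension-shift to reach all $i$—is the paper's, but two steps in your execution do not go through as written.

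First, the shifting is miswired. If $0\to K\to F\to M\to 0$ is merely a weak flat precover of $M$, then $\textbf{T}_P[1]$ is \emph{not} a $\mathcal{WF}$-copure Tate projective resolution of $K$: there is no link between $K$ and the syzygies of $\textbf{P}$, so the isomorphism $\widehat{\operatorname{Ext}}^i_{\mathcal{GWP}}(M,N)\cong\widehat{\operatorname{Ext}}^{i+1}_{\mathcal{GWP}}(K,N)$ is unjustified. The paper instead shifts along $\textbf{T}_P$ itself. With $M$ Gorenstein weak projective and $M_{-1}=\operatorname{Im}d_{-1}$ in $\textbf{T}_P$, the sequence $0\to M\to P_{-1}\to M_{-1}\to 0$ has $P_{-1}$ projective, so $\textbf{T}_P[-1]$ really is a Tate projective resolution of $M_{-1}$, and simultaneously $\operatorname{Hom}_R(P_{-1},\textbf{T}_I)$ is exact, so the \emph{same} short exact sequence shifts $\widehat{\operatorname{Ext}}^i_{\mathcal{GWI}}$ as well. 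Both sides thus land on the same pair $(M_{-1},N)$; your independent shifts in different variables would land on unrelated pairs, and the ``shifted pair with finite $pd$ or $id$'' you appeal to never appears. The paper packages this as induction on $Gwpd_R(M)$: the base case $M\in\mathcal{GWP}$ is handled for $i\geq 1$ via Lemma~\ref{lem3.7} (since $\operatorname{Ext}^i_{\mathcal{GW}}(M,N)=0$ there, that long exact sequence gives $\widehat{\operatorname{Ext}}^i_{\mathcal{GWI}}(M,N)\cong\operatorname{Ext}^i_R(M,N)\cong\widehat{\operatorname{Ext}}^i_{\mathcal{GWP}}(M,N)$), and for $i\leq 0$ by the shift just described; the inductive step uses the ordinary syzygy $M_1=\ker(P_0\to M)$.

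Second, the double-complex alternative as stated does not compute either side. The rows $\operatorname{Hom}_R((\textbf{T}_P)_j,\textbf{T}_I)$ are indeed exact, but simply because $(\textbf{T}_P)_j$ is projective and $\textbf{T}_I$ is exact—$\mathcal{WI}$-purity concerns weak \emph{injective} test objects, not weak flat ones. Dually for the columns. Hence both spectral sequences of $\operatorname{Hom}_R(\textbf{T}_P,\textbf{T}_I)$ collapse to zero (convergence aside—both complexes are unbounded on both sides), which says only that the total complex is acyclic, not that it computes $\operatorname{H}^i(\operatorname{Hom}_R(M,\textbf{T}_I))$ or $\operatorname{H}^i(\operatorname{Hom}_R(\textbf{T}_P,N))$. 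Since neither $M$ nor $N$ sits at an edge of its Tate complex, there is no augmentation producing those groups from this bicomplex.
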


\begin{proof}
We use induction on $n=Dpd_R(M)<\infty$. If $M$ is Gorenstein weak projective, then $M$ admits a $\mathcal{WF}$-copure  Tate projective
resolution  $\xymatrix@C=0.5cm{
  \textbf{T} \ar[r]^{u} & \textbf{P} \ar[r]^{\pi} &M }$, where $T_i=P_i$ and  $u_i=\mbox{Id}_{P_i}$ for any $i\geq 0$. Thus $\widehat{\mbox{Ext}}^i_{\mathcal{GWP}}(M,N)\cong \mbox{Ext}^i_R(M,N), i\geq 1$. Note that $\mbox{Ext}^i_{\mathcal{GWI}}(M,N)\cong\mbox{Ext}^i_{\mathcal{GWP}}(M,N)=0$ for any $i\geq 1$, and hence
  $\widehat{\mbox{Ext}}^i_{\mathcal{GWI}}(M,N)\cong \mbox{Ext}^i_R(M,N)$ by Lemma $\ref{lem3.7}$.
  Therefore, $\widehat{\operatorname{Ext}}^i_{\mathcal{GWP}}(M,N)=\widehat{\operatorname{Ext}}^i_{\mathcal{GWI}}(M,N)$ for any  $i\geq 1$. Now we see the case $i\leq 0$. Since $N$ has finite Gorenstein weak injective dimension, we may take a $\mathcal{WI}$-pure  Tate injective
resolution
  $\xymatrix@C=0.5cm{
 N \ar[r] & \textbf{E} \ar[r] &\textbf{T}' }$. Assume that $\textbf{T}$ in the above resolution is of the form
 $$\xymatrix@C=0.5cm{ \cdots\ar[r]&P_1\ar[r]^{d_1}&P_0\ar[r]^{d_0}&P_{-1}\ar[r]^{d_{-1}}&P_{-2}\ar[r]&\cdots}.$$
 By the definition of $\mathcal{WF}$-copure  Tate projective
resolution, each $M_i:=\mbox{Im}d_i$ is Gorenstein weak projective. Let $M_{-1}=\mbox{Im}d_{-1}$. Then we have an exact sequence $$\xymatrix@C=0.5cm{
   0 \ar[r] & M \ar[r]^{} & P_{-1} \ar[r]^{} & M_{-1} \ar[r] & 0 }.$$ Since each term of $\textbf{T}'$ is injective, we have the following exact sequence of complexes
 $$\xymatrix@C=0.5cm{
   0 \ar[r] & \mbox{Hom}_R(M_{-1},\textbf{T}') \ar[r]^{} & \mbox{Hom}_R(P_{-1},\textbf{T}') \ar[r]^{} &
   \mbox{Hom}_R(M,\textbf{T}')\ar[r] & 0 },$$
which induced the following exact sequence
\begin{multline*}
 \cdots\rightarrow\mbox{H}^i(\mbox{Hom}_R(P_{-1},\textbf{T}'))\rightarrow\mbox{H}^i(\mbox{Hom}_R(M,\textbf{T}'))\rightarrow  \\
  \mbox{H}^{i+1}(\mbox{Hom}_R(M_{-1},\textbf{T}'))\rightarrow
\mbox{H}^{i+1}(\mbox{Hom}_R(P_{-1},\textbf{T}')\rightarrow\cdots.
\end{multline*}
It is obvious that $\mbox{H}^i(\mbox{Hom}_R(P_{-1},\textbf{T}'))=0=\mbox{H}^{i+1}(\mbox{Hom}_R(P_{-1},\textbf{T}')$. So  $\mbox{H}^i(\mbox{Hom}_R(M,\textbf{T}'))\cong
\mbox{H}^{i+1}(\mbox{Hom}_R(M_{-1},\textbf{T}'))$, that is,
$\widehat{\mbox{Ext}}^i_{\mathcal{GWI}}(M,N)\cong\widehat{\mbox{Ext}}^{i+1}_{\mathcal{GWI}}(M_{-1},N)$.
Repeating this process, we may get that $\widehat{\mbox{Ext}}^i_{\mathcal{GWI}}(M,N)\cong\widehat{\mbox{Ext}}^{1}_{\mathcal{GWI}}(M_{i-1},N)$.
On the other hand, it is obvious to verify that $
  \textbf{T}[-1] \rightarrow \textbf{P}[-1] \rightarrow M_{-1} $ is a $\mathcal{WF}$-copure  Tate projective
resolution of $M_{-1}$, and hence  $\widehat{\mbox{Ext}}^i_{\mathcal{GWP}}(M_{-1},N)\cong \widehat{\mbox{Ext}}^{i-1}_{\mathcal{GWP}}(M,N)$. By a similar argument as in the above, we have $\widehat{\mbox{Ext}}^i_{\mathcal{GWP}}(M,N)\cong \widehat{\mbox{Ext}}^{1}_{\mathcal{GWP}}(M_{i-1},N)$, and hence $$\widehat{\mbox{Ext}}^i_{\mathcal{GWI}}(M,N)\cong \widehat{\mbox{Ext}}^{1}_{\mathcal{GWI}}(M_{i-1},N)\cong \widehat{\mbox{Ext}}^1_{\mathcal{GWP}}(M_{i-1},N)\cong \widehat{\mbox{Ext}}^{i}_{\mathcal{GWP}}(M,N),\ i\leq 0.$$
Therefore, we have $\widehat{\mbox{Ext}}^i_{\mathcal{GWP}}(M,N)=\widehat{\mbox{Ext}}^i_{\mathcal{GWI}}(M,N),
i\in \mathbb{Z}$, whenever $M$ is Gorenstein weak projective.

Assume that the assertion holds for the case $n-1$. Let $Gwpd_R(M)=n$ and consider a $\mathcal{WF}$-copure  Tate projective
resolution of $M$ as follows:
$$\xymatrix@=0.5cm{
M&=&&&&M&&\\
\textbf{P}\ar[u]&=&\cdots\ar[r]&P_2\ar[r]&P_1\ar[r]&P_0\ar[u]\\
\textbf{T}\ar[u]&=&\cdots\ar[r]&T_2\ar[r]\ar[u]&T_1\ar[r]\ar[u]&T_0\ar[r]\ar[u]&T_{-1}\ar[r]&\cdots.
  }$$
Let $M_1=\mbox{Ker}(P_0\rightarrow M)$. Then we have an exact sequence $\xymatrix@C=0.5cm{
  0 \ar[r] & M_1 \ar[r]^{} & P_0 \ar[r]^{} & M \ar[r] & 0 }$, and it is easy to verify that the following diagram
$$\xymatrix@=0.5cm{
M_1&=&&&&M_1&&\\
\textbf{P}[1]\ar[u]&=&\cdots\ar[r]&P_3\ar[r]&P_2\ar[r]&P_1\ar[u]\\
\textbf{T}[1]\ar[u]&=&\cdots\ar[r]&T_3\ar[r]\ar[u]&T_2\ar[r]\ar[u]&T_1\ar[r]\ar[u]&T_{0}\ar[r]&T_{-1}\ar[r]&\cdots
  }$$
is a $\mathcal{WF}$-copure  Tate projective
resolution of $M_1$. Thus $\widehat{\mbox{Ext}}^i_{\mathcal{GWP}}(M,N)=\widehat{\mbox{Ext}}^{i-1}_{\mathcal{GWP}}(M_1,N)$ for any  $i\in
\mathbb{Z}$.

On the other hand, with a similar argument to the first step, we have the following exact sequence of complexes
 $$\xymatrix@C=0.5cm{
   0 \ar[r] & \mbox{Hom}_R(M,\textbf{T}') \ar[r]^{} & \mbox{Hom}_R(P_{0},\textbf{T}') \ar[r]^{} &
   \mbox{Hom}_R(M_1,\textbf{T}')\ar[r] & 0 }$$
which induced the following exact sequence
\begin{multline*}
  \cdots\rightarrow\mbox{H}^{i-1}(\mbox{Hom}_R(P_{0},\textbf{T}'))\rightarrow\mbox{H}^{i-1}(\mbox{Hom}_R(M_1,\textbf{T}'))\rightarrow\\
  \mbox{H}^{i}(\mbox{Hom}_R(M,\textbf{T}'))\rightarrow
\mbox{H}^{i}(\mbox{Hom}_R(P_{0},\textbf{T}')\rightarrow\cdots.
\end{multline*}
It is obvious that $\mbox{H}^{i-1}(\mbox{Hom}_R(P_{0},\textbf{T}'))=0=\mbox{H}^{i}(\mbox{Hom}_R(P_{0},\textbf{T}')$. Thus
$\mbox{H}^i(\mbox{Hom}_R(M_1,\textbf{T}'))\cong
\mbox{H}^{i+1}(\mbox{Hom}_R(M,\textbf{T}'))$, that is,
$\widehat{\mbox{Ext}}^{i-1}_{\mathcal{GWI}}(M_1,N)\cong\widehat{\mbox{Ext}}^{i}_{\mathcal{GWI}}(M,N)$.
Since  $Gwpd_R(M_1)=n-1$, we have  $\widehat{\mbox{Ext}}^{i-1}_{\mathcal{GWP}}(M_1,N)\cong
\widehat{\mbox{Ext}}^{i-1}_{\mathcal{GWI}}(M_1,N)$ by the induction hypothesis. Therefore, $$\widehat{\mbox{Ext}}^i_{\mathcal{GWP}}(M,N)\cong
\widehat{\mbox{Ext}}^i_{\mathcal{GWI}}(M,N)$$ for any $ i\in \mathbb{Z}$, as desired.
\end{proof}

\begin{definition}
Let $M$ and $N$ be  $R$-modules with $Gwpd_R(M)<\infty$ and $Gwid_R(N)<\infty$. Then we define $$\widehat{\operatorname{Ext}}^n_{\mathcal{GW}}(M,N):=\widehat{\operatorname{Ext}}^n_{\mathcal{GWI}}(M,N)\cong \widehat{\operatorname{Ext}}^n_{\mathcal{GWP}}(M,N), \ n\geq \mathbb{Z}$$
and call it the $n$th \emph{Gorenstein weak Tate cohomology}.
\end{definition}

Following Lemma $\ref{lem3.7}$, we have

\begin{proposition} Let $M$ and $N$ be  $R$-modules with $Gwid_R(M)<\infty$ and $Gwid_R(N)<\infty$.
Then we have a long exact sequence
$$\xymatrix@C=0.4cm{
  0 \ar[r] & \operatorname{Ext}^1_{\mathcal{GW}}(N,M) \ar[r]^{} & \operatorname{Ext}^1_R(N,M) \ar[r]^{} & \widehat{\operatorname{Ext}}^{1}_{\mathcal{GW}}(N,M) \ar[r]^{} & \operatorname{Ext}^2_{\mathcal{GW}}(N,M) \ar[r] &
  \cdots }.$$\end{proposition}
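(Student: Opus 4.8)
The plan is to read the asserted sequence off Lemma \ref{lem3.7} after replacing the $\mathcal{GWI}$-flavoured functors by their $\mathcal{GW}$-flavoured counterparts. First I would observe that the hypothesis $Gwid_R(M)<\infty$ is exactly the assumption Lemma \ref{lem3.7} imposes on its second argument, so applying that lemma to the pair $(N,M)$ at once yields a long exact sequence
$$0 \to \operatorname{Ext}^1_{\mathcal{GWI}}(N,M) \to \operatorname{Ext}^1_R(N,M) \to \widehat{\operatorname{Ext}}^{1}_{\mathcal{GWI}}(N,M) \to \operatorname{Ext}^2_{\mathcal{GWI}}(N,M) \to \cdots$$
having the same shape as the target but with $\mathcal{GWI}$ in place of $\mathcal{GW}$ throughout, the middle terms being the absolute groups $\operatorname{Ext}^n_R(N,M)$.

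Next I would invoke the two balance theorems to rename the remaining terms. The finiteness hypotheses on $M$ and $N$ put us in the situation of Theorem \ref{balanced functor}, which provides natural isomorphisms $\operatorname{Ext}^n_{\mathcal{GWI}}(N,M)\cong\operatorname{Ext}^n_{\mathcal{GWP}}(N,M)$ for all $n\geq 0$; by the definition of the Gorenstein weak cohomology recorded just after Lemma \ref{two exact}, this common group is $\operatorname{Ext}^n_{\mathcal{GW}}(N,M)$. In the same way, Theorem \ref{Tate balance} gives $\widehat{\operatorname{Ext}}^n_{\mathcal{GWI}}(N,M)=\widehat{\operatorname{Ext}}^n_{\mathcal{GWP}}(N,M)$ for every $n\in\mathbb{Z}$, and by the definition of the Gorenstein weak Tate cohomology this common group is $\widehat{\operatorname{Ext}}^n_{\mathcal{GW}}(N,M)$. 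Substituting these identifications term by term into the sequence above, leaving the groups $\operatorname{Ext}^n_R(N,M)$ untouched and using that the connecting maps are transported along because the isomorphisms in question are natural, produces precisely the long exact sequence in the statement.

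The argument is formal once Lemma \ref{lem3.7}, Theorem \ref{balanced functor} and Theorem \ref{Tate balance} are available, so I do not anticipate a genuine obstacle. The only point that needs a moment's attention is the bookkeeping of hypotheses: one must confirm that the assumed finiteness of the Gorenstein weak dimensions of $M$ and $N$ is exactly what is required for $\operatorname{Ext}^n_{\mathcal{GW}}(N,M)$ and $\widehat{\operatorname{Ext}}^n_{\mathcal{GW}}(N,M)$ to be defined and to coincide with their $\mathcal{GWI}$-versions, so that the substitution carried out in the second step is legitimate.
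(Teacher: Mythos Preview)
Your approach is exactly the paper's: the proposition is stated with the preamble ``Following Lemma~\ref{lem3.7}, we have'' and no further argument is given, so deducing it from Lemma~\ref{lem3.7} together with the definitions of $\operatorname{Ext}^n_{\mathcal{GW}}$ and $\widehat{\operatorname{Ext}}^n_{\mathcal{GW}}$ is precisely what is intended.

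Your closing caution about the bookkeeping is well placed and in fact uncovers a slip in the statement: as written, the hypotheses are $Gwid_R(M)<\infty$ and $Gwid_R(N)<\infty$, but the $\mathcal{GW}$-functors $\operatorname{Ext}^n_{\mathcal{GW}}(N,M)$ and $\widehat{\operatorname{Ext}}^n_{\mathcal{GW}}(N,M)$ are defined (see the definitions following Theorem~\ref{balanced functor} and Theorem~\ref{Tate balance}) only when the \emph{first} variable has finite Gorenstein weak \emph{projective} dimension. Thus the second hypothesis should read $Gwpd_R(N)<\infty$; with that correction your substitution step via Theorems~\ref{balanced functor} and~\ref{Tate balance} goes through verbatim. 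This is a typo in the paper rather than a defect in your argument.
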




\begin{thebibliography}{99}

\bibitem{AB} M. Auslander, M. Bridger, Stable Module Theory, Am. Math. Soc., 1969

\bibitem{AM} L. L. Avramov, A. Martsinkovsky, Absolute, relative, and Tate cohomology of modules of finite
Gorenstein dimension, Proc. Lond. Math. Soc., 2006, 85: 393-440

\bibitem{BGMS} M. Behboodi, A. Ghorbani, A. Moradzadeh-Dehkordi,  S. H. Shojaee, C-pure projective modules, Comm. Algebra, 2013, 41(12): 4559-4575



\bibitem{Ch} L. W. Christensen, Gorenstein Dimension, Lecture Notes in Math., vol. 1747, Springer-Verlag, Berlin, 2000

\bibitem{CFH} L. W. Christensen, A. Frankild, H. Holm, On Gorenstein projective, injective and flat dimensions--a functorial description
with applications, J. Algebra, 2006, 302: 231-279

\bibitem{DLM} N. Ding, Y. Li, L. Mao,  {Strongly
Gorenstein flat modules}, {J. Aust. Math. Soc.}, 2009, {86}:
323-338


\bibitem{EJ} E. E. Enochs, O. M. G. Jenda, Gorenstein injctive and projective modules, Math. Z. 1995, 220: 611-633

\bibitem{EJ1} E. E. Enochs, O. M. G. Jenda, Relative Homological Algebra, de Gruyter Exp. Math., vol. 30, Walter de Gruyter, Berlin, New
York, 2000


\bibitem{EM} S. Eilenberg, J. C.  Moore, Foundations of relative homological algebra, Am. Math. Soc., 1965

\bibitem{Fa} S. M. Fakhruddin, Pure-injective modules, Glasgow Math. J., 1973, 14(02): 120-122

\bibitem{Gao} Z. Gao, Coherent rings and Gorenstein FP-injective modules, Comm. Algebra, 2012, 40: 1669-1679

\bibitem{Gi} J. Gillespie,  {Model structures on modules over
Ding-Chen rings}, {Homology, Homotopy Appl.}, 2010, {12}(1):
61-73

\bibitem{GH} Z. Gao, Z. Huang, Weak injective covers and dimension of modules, Acta Math. Hungar. (to appear), preprint available at: http://math.nju.edu.cn/$\thicksim$huangzy/papers/weakinj.pdf

\bibitem{GW0} Z. Gao, F. Wang, All Gorenstein hereditary rings are coherent, J. Algebra
Appl., 2014, 13(4): 1350140 (5 pages)

\bibitem{GW} Z. Gao, F. Wang, Weak injective and weak flat modules, Comm. Algebra (to appear)


\bibitem{Ha} R. Hartshorne, Algebraic Geometry, Springer, 1977

\bibitem{Ho} H. Holm, Gorenstein homological dimensions, J. Pure Appl. Algebra, 2004, 189: 167-193

\bibitem{Ho2} H. Holm, Gorenstein derived functors, Proc. Am. Math. Soc., 2004, 132: 1913-1923

\bibitem{Hu} Z. Huang, Proper resolutions and Gorenstein categories, J. Algebra, 2013, 393(1): 142-169

\bibitem{Hu1} Z. Huang,  Duality of preenvelopes and pure injective modules. Canad. Math. Bull., 2014, 57(2): 318-325

\bibitem{HM} L. Hummel, T. Marley, The Auslander¨CBridger formula and the Gorenstein property
for coherent rings, J. Commut. Algebra, 2009, 1:  283-314.

\bibitem{Ia} A. Iacob,  {Generalized Tate cohomology}, {Tsukuba J. Math.}, 2005, {29}:
389-404

\bibitem{MD} L. Mao, N. Ding,  {Gorenstein FP-injective and Gorenstein
flat modules}, {J. Algebra Appl.}, 2008,  {7}(4): 491-506

\bibitem{MT} N. Mahdou, M. Tamekkante, Strongly Gorenstein flat modules and dimensions, Chin. Ann. Math., 2011, 32B(4): 533-548

\bibitem{Ro} J. J. Rotman,     {An Introduction to Homological
Algebra},   New York: Springer, 2009

\bibitem{Si} G. Simmons, Cyclic-purity: a generalization of purity for modules,  Houston
J. Math., 1987, 13(1): 135-150


\bibitem{St} B. Stenstr\"{o}m, Coherent rings and FP-injective modules, J. London Math. Soc., 1970,
2: 323-329

\bibitem{SSW} S. Sather-Wagstaff, T. Sharif, D. White, Stability of Gorenstein categories,
J. London Math. Soc., 2008, 77: 481-502

\bibitem{Wa} R. B. Warfield, Jr., Purity and algebraic compactness for modules, Pacific J. Math., 1969, 28(3): 699-719

\bibitem{Wi} R. Wisbauer, Foundations of Module and Ring Theory, CRC Press, 1991.




\bibitem{Ya}  G. Yang,  {Homological properties of modules over
Ding-Chen rings}, {J. Korean Math. Soc.}, 2012, {49}(1): 31-47


\bibitem{ZAD} F. Zareh-Khoshchehreh, M. Asgharzadeh, K. Divaani-Aazar, Gorenstein homology, relative pure homology and virtually Gorenstein rings, J. Pure Appl. Algebra (to appear)

\end{thebibliography}
\end{document}